\title[
    Quasi-Isometry Invariance of discrete Filling Functions%
]{%
    Quasi-Isometry Invariance of \mbox{discrete Higher Filling Functions}%
}%
\author{Jannis Weis}
\address{\foreignlanguage{ngerman}{Fakultät für Mathematik, Karlsruher Institut für Technologie}, 76131 Karlsruhe, Germany}
\email{jannis.weis@kit.edu}
\date{\today}
\keywords{Dehn functions, Higher filling functions, Quasi-isometry invariance}
\subjclass[2020]{20F69; 20F65; 05C25}
\begin{document}

\begin{abstract}
    We prove that homological filling functions over a ring \(R\) equipped with the discrete norm are quasi-isometry invariants for
    all groups of type~$\FP_n$.
    This confirms a conjecture of Bader--Kropholler--Vankov
    in the case of discrete norms.
    The proof uses a technique of equipping free chain complexes with a geometric structure, allowing
    for analogues of cellular constructions in the purely algebraic setting.
    As a further application we prove quasi-isometry invariance for a weighted version of integral and discrete filling functions originally introduced in the study of the rapid decay property.
\end{abstract}

\maketitle


\section{Introduction}

For a finitely presented group $\Gamma$, the classical \emph{Dehn function} $\delta_\Gamma(l)$
quantifies the difficulty of solving the word problem by measuring the maximal filling area
of a null-homotopic word of length at most \(l\). From a computational viewpoint, $\delta_\Gamma$ reflects the
complexity of a brute-force approach to the word problem in $\Gamma$.
This extends naturally to higher dimensions: replacing loops by \(n\)\=/spheres
yields the \emph{homotopical} higher Dehn functions $\delta_\Gamma^n$. It is well known that
these functions are quasi-isometry invariants for groups of type $\F_n$, that is, groups admitting a classifying space with finite \(n\)\=/skeleton~\cite{alonso+wang+pride}.

Instead of filling spheres, one can also study the maximal filling volume of integral \(n\)\=/cycles,
giving the \emph{homological} higher filling functions $\filling{\bbZ,\Gamma}{n+1}$.
Homological filling functions have the advantage that they readily generalise to arbitrary normed
coefficient rings. This yields a large family of functions $\filling{R,\Gamma}{n}$ depending on both
the choice of coefficients and on the chosen norm \cites{li+manin2021,bader+kropholler+vankov25}.

The most common coefficient choices for studying finiteness properties and filling functions -- besides the integers
with the Euclidean norm -- are finite fields equipped with the discrete norm, where the size of a chain is the cardinality
of its support~\cites{kropholler,li+manin2021,kielak+kropholler2021}.
More generally, any ring can be given the discrete norm, and we denote the resulting \emph{discrete} filling function
by $\dfilling{R,\Gamma}{n}$.

A natural question is whether the homological filling functions $\filling{R,\Gamma}{n}$ are also quasi-isometry
invariants of the group. In the case of integral filling functions $\filling{\bbZ,\Gamma}{n}$
this is known to be true for groups of type $\mathrm{F}_n$ due to Fletcher~\cites{fletcher} and
Young~\cite{young}.
Recent work of Bader, Kropholler, and Vankov~\cite{bader+kropholler+vankov25} shows that
$\filling{R,\Gamma}{n}$ is a quasi-isometry invariant for groups of type~$\FH_n(R)$,
provided the functions take only finite values for both groups.
Moreover, they proved that if a group $\Gamma$ is of type $\FH_n(R)$, then $\dfilling{R,\Gamma}{n}$ is finite.
They conjectured that a similar statement should hold for all groups of type $\FP_n(R)$ and arbitrary coefficients.

\begin{conjecture*}[Bader--Kropholler--Vankov]
    Let $G\noic$ and $H\noic$ be quasi-isometric groups of type $\FP_n(R)$. Then $\filling{R,G}{n} \approx \filling{R,H}{n}$.
\end{conjecture*}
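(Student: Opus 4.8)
The plan is to prove the conjecture in the case where $R$ carries the discrete norm, i.e.\ to establish $\dfilling{R,G}{n}\approx\dfilling{R,H}{n}$ for quasi-isometric groups $G,H$ of type $\FP_n(R)$; this is the setting we treat, and the discrete norm's insensitivity to coefficient sizes is what streamlines the estimates below. The feature absent from the $\F_n$ and $\FH_n(R)$ cases treated before is that type $\FP_n(R)$ supplies \emph{no space}, only a partial free resolution of $R$ over $R\Gamma$ whose modules $C_0,\dots,C_n$ are finitely generated. So the first move is to \emph{geometrize} it: choose, $\Gamma$-equivariantly, a location in $\Cay\Gamma$ for each of the finitely many orbit generators in each degree, so that every chain $c$ acquires a finite support $\supp(c)\subseteq\Cay\Gamma$; since only finitely many group elements occur in the differentials, the locations can be chosen degree by degree so that every differential has bounded propagation. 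Because $\Gamma$ acts freely and cocompactly on the resulting geometric complex, it enjoys \textbf{uniformly bounded local fillings} through degree $n-1$: there is a radius $K$ and a bound $K'$ such that any cycle supported in a ball of radius $K$ bounds a chain of discrete norm $\le K'$ supported in a ball of radius $K'$ --- the cycles supported in a fixed ball form a finitely generated module, each of whose generators is a boundary of \emph{some} finite norm and support by exactness, and any cycle in the ball is a combination of these, whose supports union up to a bounded set regardless of coefficients; $\Gamma$-translating moves this uniformly over $\Cay\Gamma$. This is the ``geometric structure on free chain complexes'' we will work with, and $\dnorm{c}$ merely counts basis elements in $\supp(c)$ without seeing where they lie.

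With a geometric model in hand, the argument follows the classical template. The first ingredient is a \textbf{uniqueness statement}: the filling function $\operatorname{fill}(Z_\bullet)$ of any geometric model $Z_\bullet$ over $\Cay\Gamma$ that is uniformly finitely generated, has bounded-propagation differentials, and is exact with uniformly bounded local fillings in the relevant range, is --- up to $\approx$ --- independent of all choices and equal to $\dfilling{R,\Gamma}{n}$. One proves it by producing, between any two such models $Z_\bullet$ and $Z'_\bullet$, a chain homotopy equivalence realized by maps \emph{of bounded support expansion} in both directions (each basis element goes to a uniformly bounded number of basis elements), together with homotopies of bounded support expansion. Such a map is built inductively: having defined it below degree $i$, its value on a generator $e$ of $Z_i$ is taken to be a bounded localized filling of the image of $\partial e$ --- which, by induction and bounded propagation, is a cycle supported in a ball of bounded radius --- as supplied by uniformly bounded local fillings; the homotopies are built the same way one degree up. Because the discrete norm ignores coefficients, bounded support expansion is exactly what is needed to transfer fillings: a filling of a cycle in one model is pushed through the equivalence (in either direction) and corrected by the relevant homotopy term, changing the discrete norm only by a bounded factor plus a bounded linear term, hence preserving the function up to $\approx$. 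These inductive-filling constructions, packaged via algebraic mapping cylinders and cones, are the advertised analogues of cellular constructions.

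The second ingredient is the \textbf{transfer along a quasi-isometry}. Fix a quasi-isometry $\phi\colon G\to H$ and push a geometric model $Z_\bullet^{G}$ of $G$ forward along it: relocate the generator at $g\in G$ to the point $\phi(g)\in H$. Since $\phi$ is coarsely surjective and boundedly many-to-one (its fibres have uniformly bounded diameter and $\Cay G$ has bounded geometry), $\phi_*Z_\bullet^{G}$ is again a geometric model over $\Cay H$, and it inherits uniformly bounded local fillings: a cycle supported in a bounded ball of $\Cay H$ pulls back, via the coarse-Lipschitz inequality, to a cycle of bounded diameter over $Z_\bullet^{G}$, whose bounded localized filling pushes forward. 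Crucially, relocating generators is a bijection on basis elements, so $\operatorname{fill}(\phi_*Z_\bullet^{G})=\operatorname{fill}(Z_\bullet^{G})$. Feeding this through the uniqueness statement over $\Cay G$ and over $\Cay H$ gives $\dfilling{R,G}{n}\approx\operatorname{fill}(Z_\bullet^{G})=\operatorname{fill}(\phi_*Z_\bullet^{G})\approx\dfilling{R,H}{n}$, which is the assertion.

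I expect the genuine difficulty to be the uniqueness statement, and behind it the fact that one has left group homological algebra altogether: $\phi_*Z_\bullet^{G}$ is \emph{not} a complex of $RH$-modules, so there is no equivariance, and the textbook ``lift generators by projectivity'' comparison must be replaced by the geometric, localized construction above, with the support expansion --- and the propagation, needed to keep intermediate images localized --- of every map tracked explicitly. There is an apparent danger of circularity, since comparing two models might seem to call for a \emph{global} filling, the very quantity being measured; but only \emph{local} fillings enter the comparison, and those come for free from cocompactness, while the global filling function is precisely the obstruction to upgrading local control to a global contraction, which one never needs to upgrade. The remainder is bookkeeping that nevertheless must be done with care: carrying the augmentation $C_0\to R$ through the constructions with bounded propagation so that the low-degree cases (where cycles are measured relative to the augmentation) still work, keeping all quasi-isometry constants uniform, and, over a non-Noetherian coefficient ring, checking that cycles supported in a fixed ball still form a module tame enough for the ``finitely many generators'' argument behind uniformly bounded local fillings.
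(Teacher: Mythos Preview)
Your plan is correct and uses the same underlying ingredients as the paper: endow the partial free resolution with a geometric structure coming from $\Cay\Gamma$, establish that cycles of bounded support (equivalently, connected cycles of bounded discrete norm) admit fillings of bounded discrete norm, and then build non-equivariant bounded-propagation chain maps and homotopies to compare. The packaging, however, is genuinely different. The paper separates the argument into two independent theorems: first it proves outright that $\dfilling{R,\Gamma}{n}$ is finite for every $\Gamma$ of type $\FP_n(R)$ (via an explicit ``thickening'' filtration $N^{k,j}L_\ast(U)$ of the resolution by finite-rank $R$-subcomplexes, together with a colimit argument), and then it shows that finiteness of the filling functions in all degrees $\le n$ on both sides already forces $\dfilling{R,G}{n}\approx\dfilling{R,H}{n}$ by constructing $f_\ast\colon L^G_\ast\to L^H_\ast$, $h_\ast\colon L^H_\ast\to L^G_\ast$, and a homotopy $s_\ast$ directly. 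Your route instead axiomatises a class of ``geometric models'' over $\Cay\Gamma$, proves a uniqueness statement for the filling function within that class, and then observes that pushing forward along a quasi-isometry is a bijection on basis elements (hence preserves the filling function on the nose), reducing everything to uniqueness over $\Cay H$.

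What each buys: the paper's decoupling isolates finiteness of $\dfilling{R,\Gamma}{n}$ as a theorem of independent interest and makes the QI-step work for \emph{any} norm once finiteness is known (so it also recovers the integral case), whereas your model-theoretic framing is cleaner conceptually---the QI-transfer becomes essentially tautological---and makes transparent that only \emph{local} fillings enter the comparison, never global ones. Your inductive construction of bounded-propagation comparison maps between models is exactly the content of the paper's Lemmas building $f_\ast$, $h_\ast$, $s_\ast$; you have correctly identified that equivariance is lost and must be replaced by explicit support control. The one point you flag as delicate---whether cycles supported in a fixed ball form a tame enough module over a non-Noetherian $R$---is the same step where the paper asserts that $H_{n-1}(N^{0}L_\ast(U_i))$ is a finitely generated $R$-module in order to find a uniform thickening stage; both arguments rest on the same assertion, and you are right to single it out.
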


Note that it is a well-known open problem, whether $\FP_n(R)$ and $\FH_n(R)$ are equivalent for $n > 2$.

\subsection*{Discrete filling functions}
In this work, we prove the conjecture of Bader--Kropholler--Vankov for discrete filling functions, completing the picture
of quasi-isometry invariance for all standard coefficient choices.

\getkeytheorem{qiInvarianceDiscrete}

We deduce \cref{thm: QI-invariance discrete} by first proving finiteness of discrete
filling functions for groups of type~$\FP_n(R)$.

\getkeytheorem{finitenessDFV}

\Cref{thm: QI-invariance discrete} then follows from a generalisation of the criterion given by~\cite{bader+kropholler+vankov25}.

\getkeytheorem{qiInvarianceIfFinite}

As it is already known that $\filling{\bbZ,\Gamma}{n}$ is finite for all groups of type $\FP_n(\bbZ)$ with $n \geq 2$, this theorem also implies quasi isometry invariance for integral filling functions in the
non-finitely presented case.

\subsection*{Weighted filling functions}
The techniques used in the proof of \cref{thm: quasi-isometry invariance of filling} generalise to a
\emph{weighted} variation of homological filling functions.
Any free $R\Gamma$\=/module $L=\bigoplus_{b\in B}R\Gamma b$ carries an $\ell^1$\=/norm obtained by
viewing it as a free \(R\)\=/module with basis $\Gamma B$ and setting
$\norm{\sum a_{\gamma b}\gamma b}=\sum\abs{a_{\gamma b}}$. In the weighted variant, each
basis element $\gamma b$ is assigned the weight $1+\ell_\Gamma(\gamma)$, where
$\ell_\Gamma$ denotes the word length in~$\Gamma$. The resulting norm
$\norm{\placeholder}^\Gamma$ and the corresponding weighted filling functions
$\weightedfilling{R,\Gamma}{n}$ appear naturally in several situations.
They were first studied by Ogle~\cite{ogle} and Ji--Ramsey~\cite{ji+ramsey} in connection
with the Rapid Decay property.
They were also utilised by Bader--Sauer in their work on higher property ($\mathrm{T}$)~\cite{bader+sauer} to deduce isomorphisms in polynomial cohomology under the assumption that $\weightedfilling{\bbZ,\Gamma}{n}$
is polynomially bounded. Furthermore,
weighted filling functions are used in upcoming joint work with Roman Sauer to give a
characterization for polynomiality of filling functions using a homological algebra framework~\cite{sauer+weis}.

Our techniques for non-weighted filling functions extend to this setting, yielding
weighted versions of \cref{thm: quasi-isometry invariance of filling,thm: QI-invariance discrete}.

\getkeytheorem{qiInvarianceDiscreteWeighted}

\getkeytheorem{qiInvarianceIntegralWeighted}

\subsection*{Proof strategy}
The key step in the proof of \cref{thm: dfilling finite} is a translation of geometric
arguments on cellular chain complexes into an algebraic framework for free
$R\Gamma$\=/chain complexes. This approach, developed in \cref{sec: cellular arguments},
replaces cellular subcomplexes with finitely generated \(R\)\=/subcomplexes and reproduces the
relevant geometric constructions purely algebraically. Conceptually, under mild hypotheses
(which $\FP_n(R)$\=/resolutions can be arranged to satisfy), free $R\Gamma$\=/chain complexes
behave much like the cellular chain complexes of genuine spaces.

The resulting techniques
appear interesting in their own right and may admit further applications.
Indeed, an easy application of this framework is \cref{prop: finite fibers implies finite filling function},
which provides another proof of the finiteness theorem of~\cite{fleming+martinez-pedroza} for the integral filling functions.
We also present a new proof of the quasi-isometry invariance of cohomology with coefficients
in the group ring for groups of type $\FP(R)$.

Once we developed this algebraic framework,
the proof of \cref{thm: dfilling finite} is a generalisation of the corresponding
geometric proof in~\cite{bader+kropholler+vankov25} to this setting.

\subsection*{Structure}
The paper is organised as follows. In \cref{sec: filling functions} we recall the
definition of filling functions over normed rings and establish basic properties needed
throughout. \Cref{sec: cellular arguments} develops the algebraic counterpart of
geometric arguments on $\Gamma$\=/complexes, which is then applied in
\cref{sec: finiteness} to prove the finiteness of discrete filling functions.
\Cref{sec: QI invariance} gives the proof of \cref{thm: quasi-isometry invariance of filling}
and its weighted counterpart.
Finally, \cref{sec: cohomological dimension} contains the proof
of the quasi-isometry invariance of cohomology using the techniques
from \cref{sec: cellular arguments, sec: QI invariance}.

\subsection*{Acknowledgements}
I would like to thank my advisors Claudio Llosa Isenrich and Roman Sauer for their valuable comments and suggestions during the creation of this work and Kevin Li for suggesting the content of \cref{sec: cohomological dimension} as a possible application.
This research was supported by the \foreignlanguage{ngerman}{Deutsche Forschungsgemeinschaft} (DFG, German Research Foundation) through
the Research Training Group DFG 281869850 (RTG 2229).

\section{Homological filling functions over normed rings}\label{sec: filling functions}
Following~\cite{bader+kropholler+vankov25} and~\cite{kielak+kropholler2021}, we give a short introduction to
homological filling functions over arbitrary normed rings.

Throughout, let $\Gamma$ be a group and \(R\) a ring with unit $1 \neq 0$.
All modules considered are left modules unless indicated otherwise.

\subsection{Normed rings}

\begin{defn}
    A \emph{norm} on a ring \(R\) is a function $\abs{\placeholder} \colon R \to \bbR_{\geq 0}$ such that
    \begin{condenum}
        \item $\abs{x} \geq 0$ and $\abs{x} = 0$ if and only if $x = 0$;
        \item $\abs{x + y} \leq \abs{x} + \abs{y}$;
        \item $\abs{xy} \leq \abs{x}\abs{y}$.
    \end{condenum}
    A ring together with a choice of norm is called a \emph{normed ring}.
    Two norms $\abs{\placeholder}_1, \abs{\placeholder}_2$ are said to be \emph{equivalent} if there exists a constant $C > 0$ such that
    \[
        \frac{1}{C} \cdot \abs{x}_2 \leq \abs{x}_1 \leq C \cdot \abs{x}_2
    \]
    for all $x \in R$.
\end{defn}

\begin{example}%
    \label{exa: norms}\AvoidPageBreak\leavevmode
    \begin{enumerate}
        \item If \(R\) is a subring of $\bbC$, the restriction of the absolute value turns \(R\) into a normed ring.
        \item Any ring can be made into a normed ring by equipping it with the \emph{discrete norm}, given by
              \[
                  \dabs{x} =
                  \begin{cases}
                      0 & \text{if } x = 0\mpct{;}    \\
                      1 & \text{if } x \neq 0\mpct{.}
                  \end{cases}
              \]
        \item Let $\Gamma$ be a finitely generated group with word metric~$d_\Gamma$
              and associated word length $\ell_\Gamma(\gamma)=d_\Gamma(\gamma,e)$.
              Let \(R\) be a normed ring. The associated \emph{weighted norm} on the
              group ring $R\Gamma$ is defined as
              \begin{align*}
                  \norm{\sum_{\gamma \in \Gamma} a_\gamma \gamma}^\Gamma \defq \sum_{\gamma \in \Gamma} \abs{a_\gamma} (1 + \ell_\Gamma(\gamma))\mpct{.}
              \end{align*}
              It is a straightforward computation that this is a norm on $R\Gamma$.
    \end{enumerate}
\end{example}

For any normed ring \(R\), there is an induced \emph{$\ell^1$\=/norm} on any free \(R\)\=/module with basis \(B\) given by
\[
    \norm{\sum_{b \in B} \alpha_b b} = \sum_{b \in B} \abs{\alpha_b}\mpct{.}
\]
We can consider the group ring $R\Gamma$ as a free \(R\)\=/module with basis $\Gamma$ and equip it with the $\ell^1$\=/norm.

Let $\Gamma$ be finitely generated. On free $R\Gamma$\=/modules \(L\) we consider two separate
norms. The $\ell^1$\=/norm $\norm{\cdot}$ induced by viewing \(L\) as a free \(R\)\=/module
and the weighted norm $\norm{\cdot}^\Gamma$ coming from the weighted norm on $R\Gamma$.

If \(R\) is equipped with the discrete norm, we write $\dnorm{\cdot}$ and $\dnorm{\cdot}^\Gamma$ for the corresponding induced norms.

The following two properties of a ring are sometimes convenient to assume.

\begin{defn}%
    \label{def: norm properties}
    A norm on a ring is called \emph{symmetric} if $\abs{{-x}} = \abs{x}$ for all $x \in R$.
    A norm is called \emph{$\epsilon$\=/separated} for $\epsilon > 0$ if $\abs{x} \geq \epsilon$ for all $x \in R \setminus \{0\}$.
\end{defn}

Examples of symmetric, $\epsilon$\=/separated norms include the absolute value norm on $\bbZ$
and the discrete norm on any ring. For both we can choose $\epsilon = 1$.
In \cref{sec: finiteness:non-finite}, we give an example showing that the assumption of an $\epsilon$\=/separated norm is essential for $\filling{R,\Gamma}{n}$ to take finite values for all groups.

\begin{lem}%
    \label{lem: symmetric norm}
    Every norm on a ring $R\noic$ is equivalent to a symmetric norm.
\end{lem}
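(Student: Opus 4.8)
The plan is to symmetrize the given norm directly. I would define
\[
    \norm{x} \defq \max\bigl(\abs{x},\, \abs{-x}\bigr) \qquad (x \in R)
\]
(the variant \(x \mapsto \abs{x}+\abs{-x}\) would serve equally well, with different constants), and then verify in turn that \(\norm{\placeholder}\) is a norm, that it is symmetric, and that it is equivalent to \(\abs{\placeholder}\).

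Checking the norm axioms is routine. Positivity is clear, and \(\norm{x}=0\) forces \(\abs{x}=0\), hence \(x=0\), while \(\norm{0}=0\). For the triangle inequality I would combine \(\abs{-(x+y)} = \abs{(-x)+(-y)} \leq \abs{-x}+\abs{-y}\) with the elementary estimate \(\max(a+b,\,a'+b')\leq \max(a,a')+\max(b,b')\) for non-negative reals, which yields \(\norm{x+y}\leq\norm{x}+\norm{y}\). For submultiplicativity I would bound \(\abs{xy}\leq\abs{x}\abs{y}\leq\norm{x}\norm{y}\) and, writing \(-xy=(-x)y\), also \(\abs{-xy}\leq\abs{-x}\abs{y}\leq\norm{x}\norm{y}\), and then take the maximum of the two.

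Symmetry holds by construction, since \(\max(\abs{-x},\abs{x})=\max(\abs{x},\abs{-x})\). For equivalence, one direction is immediate: \(\abs{x}\leq\norm{x}\). For the other, applying submultiplicativity of the original norm to \(-x=(-1)\cdot x\) gives \(\abs{-x}\leq\abs{-1}\,\abs{x}\), hence \(\norm{x}\leq C\abs{x}\) with \(C\defq\max(1,\abs{-1})\); combining, \(\tfrac{1}{C}\norm{x}\leq\abs{x}\leq\norm{x}\leq C\norm{x}\), so \(\abs{\placeholder}\) and \(\norm{\placeholder}\) are equivalent.

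I do not expect a genuine obstacle here; the statement is elementary. The only point that calls for a moment's attention is submultiplicativity, where one must split off the sign on the correct factor, using \(-xy=(-x)y\) rather than symmetrizing both factors simultaneously; and one should note that \(\abs{-1}\) is automatically finite (the norm takes values in \(\bbR_{\geq 0}\)), so no hypothesis beyond the norm axioms is needed.
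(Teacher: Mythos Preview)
Your proof is correct and follows the same symmetrization idea as the paper's: the paper uses the sum \(\abs{x}+\abs{-x}\) (which you mention as an alternative) rather than the maximum, and bounds the equivalence via the identical observation \(\abs{-x}\leq\abs{-1}\,\abs{x}\). The two variants are interchangeable, so this is essentially the same approach.
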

\begin{proof}
    One easily checks that $\abs{r}' \defq \abs{r} + \abs{{-r}}$ defines a norm on \(R\). By definition, this norm
    is symmetric and $\abs{r} \leq \abs{r}' \leq \abs{r} + \abs{{-1}}\abs{r} = (1 + \abs{{-1}})\abs{r}$.
\end{proof}

\begin{lem}%
    \label{lem: 0-separated norm bounded by 1}
    Let $R\noic$ be an $\epsilon$\=/separated normed ring. Then up to equivalence of norms we may assume that \(R\) is \(1\)\=/separated.
\end{lem}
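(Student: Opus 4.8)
The plan is to rescale the given norm by a constant factor that is at least $1$: multiplying a norm by a factor $\ge 1$ leaves the three norm axioms intact (whereas a factor $< 1$ could destroy submultiplicativity), and a large enough factor pushes the norm of every nonzero element above $1$.

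Concretely, I would set $c \defq \max\{1,\epsilon^{-1}\}$ and $\abs{x}' \defq c\,\abs{x}$ for $x \in R$, and then verify the three norm axioms for $\abs{\placeholder}'$. Nonnegativity and definiteness are immediate since $c > 0$, and the triangle inequality follows by scaling the one for $\abs{\placeholder}$. For submultiplicativity I would use $c \ge 1$:
\[
    \abs{xy}' = c\,\abs{xy} \le c\,\abs{x}\abs{y} \le c^2\,\abs{x}\abs{y} = \abs{x}'\abs{y}'.
\]
This appeal to $c \ge 1$ is the only subtle point — it is exactly why one cannot simply divide the norm by $\epsilon$ when $\epsilon$ happens to exceed $1$ — so I regard it as the (admittedly minor) main obstacle.

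Finally, $\abs{\placeholder}'$ is equivalent to $\abs{\placeholder}$, being the positive scalar multiple $c\,\abs{\placeholder}$ (one may take the constant $C = c$ in the definition of equivalence). It is also $1$\=/separated: for $x \neq 0$ we have $\abs{x}' = c\,\abs{x} \ge c\,\epsilon = \max\{\epsilon,1\} \ge 1$. Thus, after replacing the original norm by the equivalent norm $\abs{\placeholder}'$, we may assume that $R$ is $1$\=/separated.
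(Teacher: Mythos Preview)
Your proof is correct and follows essentially the same approach as the paper: both scale the norm by a factor at least $1$ (the paper scales by $\epsilon^{-1}$ only when $\epsilon < 1$, while you uniformly use $c = \max\{1,\epsilon^{-1}\}$). Your write-up is in fact more careful than the paper's, explicitly flagging that $c \ge 1$ is what preserves submultiplicativity.
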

\begin{proof}
    Scaling a norm by a factor $C \geq 1$ defines a norm which is equivalent to the original one.
    Therefore, if $\epsilon < 1$ in \cref{def: norm properties}, we can scale the norm by $\frac{1}{\epsilon}$.
\end{proof}

The following \lcnamecref{lem: equivariant bounded} is proved in~\cite{bader+kropholler+vankov25}*{Lemma 2.3} for the
non-weighted $\ell^1$\=/norm on free modules. In the weighted case the proof is identical.

\begin{lem}%
    \label{lem: equivariant bounded}
    Let $\Gamma$ be a finitely generated group and $f \colon M \to N\noic$ be an $R\Gamma$\=/linear map between finitely generated
    free $R\Gamma$\=/modules. Then there exists a constant $C > 0$ such that $\norm{f(x)}^\Gamma \leq C \norm{x}^\Gamma$
    for all $x \in M$.
\end{lem}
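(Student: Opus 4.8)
The plan is to reduce everything to the behaviour of the weighted norm under left translation by a group element. Choose an $R\Gamma$-basis and write $M = \bigoplus_{i=1}^{m} R\Gamma e_i$. Since $f$ is $R\Gamma$-linear it is completely determined by the finitely many elements $f(e_i) \in N$. For an arbitrary $x = \sum_{i=1}^m \sum_{\gamma \in \Gamma} a_{\gamma i}\, \gamma e_i \in M$, the triangle inequality together with the submultiplicativity $\abs{xy} \leq \abs{x}\,\abs{y}$ of the norm on $R$ (which gives $\norm{r y}^\Gamma \leq \abs{r}\,\norm{y}^\Gamma$ for $r \in R$ and $y$ in a free $R\Gamma$-module) yields
\[
  \norm{f(x)}^\Gamma \leq \sum_{i=1}^m \sum_{\gamma \in \Gamma} \abs{a_{\gamma i}}\, \norm{f(\gamma e_i)}^\Gamma .
\]
So it suffices to produce a constant $C > 0$, independent of $i$ and of $\gamma$, with $\norm{f(\gamma e_i)}^\Gamma \leq C\,(1 + \ell_\Gamma(\gamma)) = C\,\norm{\gamma e_i}^\Gamma$; summing the displayed inequality then gives $\norm{f(x)}^\Gamma \leq C\,\norm{x}^\Gamma$.

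For the key estimate, note that $f(\gamma e_i) = \gamma\, f(e_i)$ by $R\Gamma$-linearity, so one only needs to control $\norm{\gamma y}^\Gamma$ for a fixed $y \in N$. Writing $N = \bigoplus_{c} R\Gamma c$ and $y = \sum_{c} \sum_{\delta} a_{\delta c}\, \delta c$, left multiplication by $\gamma$ sends $y$ to $\sum_c \sum_\delta a_{\delta c}\, (\gamma\delta) c$. The subadditivity of word length, $\ell_\Gamma(\gamma\delta) \leq \ell_\Gamma(\gamma) + \ell_\Gamma(\delta)$, gives
\[
  1 + \ell_\Gamma(\gamma\delta) \leq 1 + \ell_\Gamma(\gamma) + \ell_\Gamma(\delta) \leq \bigl(1 + \ell_\Gamma(\gamma)\bigr)\bigl(1 + \ell_\Gamma(\delta)\bigr),
\]
and summing over $c$ and $\delta$ therefore yields $\norm{\gamma y}^\Gamma \leq (1 + \ell_\Gamma(\gamma))\, \norm{y}^\Gamma$.

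Combining the two steps, set $C \defq \max\{\,1,\ \norm{f(e_1)}^\Gamma,\ \dots,\ \norm{f(e_m)}^\Gamma\,\} > 0$. Then $\norm{f(\gamma e_i)}^\Gamma = \norm{\gamma f(e_i)}^\Gamma \leq (1 + \ell_\Gamma(\gamma))\,\norm{f(e_i)}^\Gamma \leq C\,\norm{\gamma e_i}^\Gamma$, and the estimate for $\norm{f(x)}^\Gamma$ above completes the argument. I do not expect a genuine obstacle here: the only point worth isolating is the submultiplicative bound on the weight $1 + \ell_\Gamma(\gamma)$, which is exactly what makes $R\Gamma$ with the weighted norm behave like a normed ring and controls left translation. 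For the unweighted $\ell^1$-norm the same argument applies and is even simpler, since there left translation by $\gamma$ is an isometry, so one may take $C = \max_i \norm{f(e_i)}$ directly.
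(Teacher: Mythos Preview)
Your proof is correct. The paper does not give a self-contained argument here: it simply cites \cite{bader+kropholler+vankov25}*{Lemma 2.3} for the unweighted \(\ell^1\)-case and remarks that the weighted case is identical. Your write-up supplies exactly the expected details, with the submultiplicativity \((1+\ell_\Gamma(\gamma\delta)) \leq (1+\ell_\Gamma(\gamma))(1+\ell_\Gamma(\delta))\) playing the role that makes the weighted norm on \(R\Gamma\) a ring norm (cf.\ \cref{exa: norms}\,(3)); this is precisely the ingredient the paper alludes to when it says the proof carries over verbatim.
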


\subsection{Filling functions}%
\label{sec: filling functions def}

\begin{defn}
    Let $n \geq 0$. A group $\Gamma$ is of type~$\FP_n(R)$ if there exists a projective $R\Gamma$\=/resolution
    $P_\ast \to R$ of the trivial $R\Gamma$\=/module \(R\), where $P_i$ is finitely generated for $i \leq n$.
    By~\cite{brown}*{VIII 4.3}, one may equivalently require the resolution to consist of free modules.
    If the resolution $P_\ast$ can be chosen to be finite, that is, $P_i = 0$ for $i > n$ for some \(n\), we say that
    $\Gamma$ is of type $\FP(R)$. If \(n\) is minimal with this property $\cd_R(\Gamma) \defq n$ is called the \emph{cohomological dimension}
    of $\Gamma$.
\end{defn}

\begin{rem}%
    \label{rem: cayley resolution}
    Let $\Gamma$ be of type~$\FP_n(R)$ with $n \geq 1$. Then there exists a free $R\Gamma$\=/resolution $L_\ast$ of the trivial
    $R\Gamma$\=/module \(R\) such that $L_i$ is finitely generated for $i \leq n$.
    We may assume that the modules $L_1$ and $L_0$ are the cellular chain modules of a Cayley graph of $\Gamma$.
    That is, if \(S\) is a finite generating set for $\Gamma$, then we may choose $L_\ast$ to look like the following chain complex
    \[
        \cdots \to L_2 \longrightarrow R\Gamma^{\size{S}} \xrightarrow{e_i \mapsto (s_i - 1)} R\Gamma \overset{\epsilon}{\longrightarrow} R\mpct{.}
    \]
\end{rem}

\begin{defn}[\cite{kielak+kropholler2021}*{Definition 2.1}]
    Let $n \geq 0$. We say that a projective $R\Gamma$\=/resolution $P_\ast \to R$ is \emph{\(n\)\=/admissible} if
    $P_n$ and $P_{n+1}$ are finitely generated free $R\Gamma$\=/modules equipped with a choice of free bases.
\end{defn}

The following definition is based on~\cite{bader+kropholler+vankov25}*{Definitions 2.9, 2.13}.

\begin{defn}%
    \label{def: filling function}
    Let \(R\) be a normed ring and $\Gamma$ be a group of type~$\FP_{n+1}(R)$. Let $P_\ast \to R$
    be an \(n\)\=/admissible resolution. The \emph{filling volume} of a cycle $c \in P_{n}$ is
    \[
        \fillvolume[R](c) \defq \;\;\inf_{\mathclap{\substack{b \in P_{n+1}\\ \partial b = c}}}\; \norm{b}\mpct{.}
    \]
    The \emph{weighted filling volume} of \(c\) is defined as
    \[
        \weightedfillvolume[R](c) \defq \;\;\inf_{\mathclap{\substack{b \in P_{n+1}\\ \partial b = c}}}\; \norm{b}^\Gamma\mpct{.}
    \]
    The \emph{\(n\)th homological filling function of $\Gamma$ with coefficients in \(R\)} is the function
    $\altfilling{R,\Gamma}{n}(l) \colon \bbR_{\geq 0} \to \bbR_{\geq 0} \cup \{\infty\}$ defined through
    \[
        \altfilling{R,\Gamma}{n}(l) \defq \;\sup_{\mathclap{\crampedsubstack{c\in P_{n}\\\partial c=0, \norm{c} \leq l}}}\; \fillvolume[R](c)\mpct{.}
    \]
    The weighted version is defined similarly by
    \[
        \weightedaltfilling{R,\Gamma}{n}(l) \defq \;\sup_{\mathclap{\crampedsubstack{c\in P_{n}\\\partial c=0, \norm{c}^\Gamma \leq l}}}\; \weightedfillvolume[R](c)\mpct{.}
    \]
\end{defn}

\begin{defn}
    Given functions $f, g \colon \bbR_{\ge 0} \to \bbR_{\ge 0}$ we write $f \preccurlyeq g$ if there are constants
    $C, D > 0$ such that $f(v) \leq C g(Cv + D) + Cv + D$ for every $v \geq 0$. We write $f \approx g$ if both
    $f \preccurlyeq g$ and $g \preccurlyeq f$ hold.
\end{defn}

The following \lcnamecref{lem: filling functions well-defined} from~\cite{bader+kropholler+vankov25} tells us that up to
$\approx$\=/equivalence $\altfilling{R,\Gamma}{n}$ and $\weightedaltfilling{R,\Gamma}{n}$ are well-defined
notions. There it is proven for non-weighted filling functions, however the proof
also applies if one replaces all norms by their weighted versions.

\begin{lem}[\cite{bader+kropholler+vankov25}*{Lemma 2.12}]%
    \label{lem: filling functions well-defined}
    Let $P_\ast\Noic$ and $P'_\ast\Noic$ be two \(n\)\=/admissible resolutions of the trivial $R\Gamma$\=/module \(R\). Let $\altfilling{}{n}$ and $\altfilling[\prime]{}{n}$ denote
    the corresponding (weighted) filling functions from \cref{def: filling function}, respectively. Then $\altfilling{}{n} \approx \altfilling[\prime]{}{n}$.
\end{lem}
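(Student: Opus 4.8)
The plan is to compare the two resolutions via the standard comparison theorem for projective resolutions and then to transport norms along the comparison maps using \cref{lem: equivariant bounded}. Since \(P_\ast\) consists of projective \(R\Gamma\)\=/modules and the augmented complex \(P'_\ast \to R\) is exact, the identity of \(R\) lifts to an augmentation-preserving chain map \(\phi \colon P_\ast \to P'_\ast\), and symmetrically there is \(\psi \colon P'_\ast \to P_\ast\) lifting \(\id_R\) (see, e.g., \cite{brown}). As \(\psi\phi\) and \(\id_{P_\ast}\) are both lifts of \(\id_R\), they are chain homotopic; fix a chain homotopy \(h\) with components \(h_i \colon P_i \to P_{i+1}\), so that \(\psi_i\phi_i - \id = \partial h_i + h_{i-1}\partial\) in each degree, and likewise fix a homotopy \(\id_{P'_\ast} \simeq \phi\psi\). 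The only maps that will be normed in the argument below are those in degrees \(n\) and \(n+1\), namely \(\phi_n, \phi_{n+1}, \psi_n, \psi_{n+1}\) and the two homotopy components \(P_n \to P_{n+1}\) and \(P'_n \to P'_{n+1}\). By \(n\)\=/admissibility, the source and target of each of these are finitely generated free \(R\Gamma\)\=/modules with chosen bases, so \cref{lem: equivariant bounded} — respectively \cite{bader+kropholler+vankov25}*{Lemma 2.3} in the unweighted case, where \(\norm{\placeholder}^\Gamma\) is replaced by the \(\ell^1\)\=/norm \(\norm{\placeholder}\) — supplies a single constant \(C \geq 1\) bounding all of them with respect to whichever norm is under consideration.

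Next I would fix a cycle \(c \in P_n\) with \(\norm{c} \leq l\). Since \(\phi\) is a chain map, \(\phi_n(c)\) is a cycle in \(P'_n\) with \(\norm{\phi_n(c)} \leq Cl\). Because \(\altfilling[\prime]{}{n}\) is non-decreasing and the infimum defining a filling volume need not be attained, for every \(\epsilon > 0\) we may choose \(b' \in P'_{n+1}\) with \(\partial b' = \phi_n(c)\) and \(\norm{b'} \leq \altfilling[\prime]{}{n}(Cl) + \epsilon\). Set \(b \defq \psi_{n+1}(b') - h_n(c) \in P_{n+1}\). Using \(\partial b' = \phi_n(c)\), \(\partial c = 0\), and the homotopy identity one computes
\[
\partial b = \psi_n(\partial b') - \partial h_n(c) = \psi_n\phi_n(c) - \bigl(\psi_n\phi_n(c) - c - h_{n-1}(\partial c)\bigr) = c,
\]
so \(b\) is a filling of \(c\) with
\[
\norm{b} \leq \norm{\psi_{n+1}(b')} + \norm{h_n(c)} \leq C\norm{b'} + C\norm{c} \leq C\,\altfilling[\prime]{}{n}(Cl) + Cl + C\epsilon .
\]
Letting \(\epsilon \to 0\) and then taking the supremum over all cycles \(c \in P_n\) with \(\norm{c} \leq l\) gives \(\altfilling{}{n}(l) \leq C\,\altfilling[\prime]{}{n}(Cl) + Cl\) for all \(l\); after enlarging constants and using monotonicity of \(\altfilling[\prime]{}{n}\) this is exactly \(\altfilling{}{n} \preccurlyeq \altfilling[\prime]{}{n}\). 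Running the same argument with the roles of \(P_\ast\) and \(P'_\ast\) interchanged (using \(\phi\psi \simeq \id_{P'_\ast}\) in place of \(\psi\phi \simeq \id_{P_\ast}\)) gives \(\altfilling[\prime]{}{n} \preccurlyeq \altfilling{}{n}\), hence \(\altfilling{}{n} \approx \altfilling[\prime]{}{n}\). Replacing \(\norm{\placeholder}\) by \(\norm{\placeholder}^\Gamma\) throughout yields the weighted statement.

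I do not expect a serious obstacle: this is the usual \enquote{projective resolutions are chain homotopy equivalent} argument, and the only delicate point is the bookkeeping. One must verify that \emph{every} comparison or homotopy map whose norm actually enters the estimate lives between finitely generated free modules — which is precisely what \(n\)\=/admissibility guarantees in degrees \(n\) and \(n+1\) — while the map \(h_{n-1} \colon P_{n-1} \to P_n\), whose source need not be finitely generated, contributes only through \(h_{n-1}(\partial c) = 0\) and therefore never has to be bounded. The remaining points are routine: monotonicity of the filling functions (immediate from the definition as a supremum over a nested family) and the passage to an \(\epsilon\)\=/approximate filling, necessitated by the fact that the infimum in the definition of the filling volume is in general not attained.
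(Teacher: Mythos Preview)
Your proposal is correct and follows exactly the standard comparison-of-resolutions argument that the paper defers to \cite{bader+kropholler+vankov25}*{Lemma~2.12}; the paper itself gives no independent proof here beyond remarking that the weighted case goes through verbatim. Your care in noting that \(h_{n-1}\) never needs to be bounded (only \(h_{n-1}(\partial c)=0\) is used) and in handling the possibly unattained infimum via an \(\epsilon\)\=/approximation is precisely the bookkeeping the cited argument requires.
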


\begin{convention}
    For a ring \(R\) equipped with the discrete norm, the notation $\dfilling{R,\Gamma}{n}$ is standard for $\altfilling{R,\Gamma}{n-1}$.
    In this case we also write $\weighteddfilling{R,\Gamma}{n}$ for the weighted version.
    Note that there is an index shift between the two notations.
    In the following we use the index convention matching $\dfilling{R,\Gamma}{n}$.
    To avoid confusion we write $\filling{R,\Gamma}{n}$ and $\weightedfilling{R,\Gamma}{n}$
    for $\altfilling{R,\Gamma}{n-1}$ and $\weightedaltfilling{R,\Gamma}{n-1}$ respectively.
\end{convention}

\begin{lem}\AvoidPageBreak%
    \label{lem: equivalent filling from eqeuivalent norms}
    Equivalent norms on $R\noic$ induce equivalent (weighted) filling functions.
\end{lem}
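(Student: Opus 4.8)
The plan is to reduce the statement to the elementary fact that rescaling the norm on \(R\) by a bounded factor rescales the induced \(\ell^1\)\=/norm — and equally the weighted norm — on every free module by the same bounded factor, and then to propagate this comparison through the infimum and supremum in \cref{def: filling function}. To begin, I would fix an \(n\)\=/admissible resolution \(P_\ast \to R\) together with its chosen free bases on \(P_n\) and \(P_{n+1}\); by \cref{lem: filling functions well-defined} it suffices to compare the two (weighted) filling functions obtained from \emph{this} resolution using two equivalent norms \(\abs{\placeholder}_1,\abs{\placeholder}_2\) on \(R\), say with \(\tfrac1C\abs{x}_2\le\abs{x}_1\le C\abs{x}_2\) for all \(x\in R\) and some \(C>0\). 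For \(i=1,2\), the \(\ell^1\)\=/norm \(\norm{\placeholder}_i\) on \(P_n\) and \(P_{n+1}\) is computed from the coordinates \(\alpha_{\gamma b}\) of a vector with respect to the fixed \(R\)\=/basis \(\{\gamma b\}\) by the formula \(\sum\abs{\alpha_{\gamma b}}_i\), resp.\ \(\sum\abs{\alpha_{\gamma b}}_i(1+\ell_\Gamma(\gamma))\) in the weighted case \(\norm{\placeholder}_i^\Gamma\); since the two norms on \(R\) enter through exactly the same formula, a termwise estimate yields \(\tfrac1C\norm{x}_2\le\norm{x}_1\le C\norm{x}_2\), and the identical bound for \(\norm{\placeholder}_i^\Gamma\), for every \(x\).

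Next I would apply this to each candidate filling \(b\in P_{n+1}\) of a given cycle \(c\in P_n\) and pass to the infimum over the set \(\{b \mid \partial b = c\}\) (which is the same for both norms, and nonempty because \(P_\ast\) is a resolution), obtaining \(\fillvolume[1](c)\le C\,\fillvolume[2](c)\) and, symmetrically, \(\fillvolume[2](c)\le C\,\fillvolume[1](c)\), and likewise for \(\weightedfillvolume[1],\weightedfillvolume[2]\). Now let \(f\) and \(g\) denote the \(n\)th filling functions built from \(P_\ast\) using \(\abs{\placeholder}_1\) and \(\abs{\placeholder}_2\) respectively. If \(c\in P_n\) is a cycle with \(\norm{c}_1\le l\) then \(\norm{c}_2\le Cl\), so \(\fillvolume[2](c)\le g(Cl)\) and hence \(\fillvolume[1](c)\le C\,g(Cl)\); taking the supremum over all such \(c\) gives \(f(l)\le C\,g(Cl)\le C\,g(Cl+C)+Cl+C\), that is, \(f\preccurlyeq g\). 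Exchanging the roles of the two norms gives \(g\preccurlyeq f\), and therefore \(f\approx g\). Running the identical argument with \(\norm{\placeholder}_i^\Gamma\) in place of \(\norm{\placeholder}_i\) and \(\weightedfillvolume\) in place of \(\fillvolume\) proves the statement for the weighted filling functions.

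I do not expect a genuine obstacle here: the argument is bookkeeping. The two points that merit a moment's care are that changing the norm on \(R\) leaves the chosen \(R\)\=/bases of \(P_n\) and \(P_{n+1}\) untouched, so that the comparison of induced norms in the first step really is coordinatewise; and that, when converting the plain estimate \(f(l)\le C\,g(Cl)\) into \(f\preccurlyeq g\), one must carry along the affine slack \(Cv+D\) from the definition of \(\preccurlyeq\), for which any \(D>0\) works.
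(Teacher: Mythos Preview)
Your proposal is correct and follows essentially the same route as the paper: compare the induced \(\ell^1\)\=/norms termwise, push the constant \(C\) through the infimum defining \(\fillvolume\), enlarge the supremum domain via \(\norm{c}_2\le C\,l\), and conclude \(f(l)\le C\,g(Cl)\), hence \(f\preccurlyeq g\); the weighted case is identical. Your extra remarks (invoking \cref{lem: filling functions well-defined} to fix a single resolution, noting the set of fillings is unchanged, and spelling out the affine slack) are harmless elaborations of the same argument.
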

\begin{proof}
    We show this for the non-weighted version. The proof is identical in the weighted case.
    Let $\abs{\placeholder}$, $\abs{\placeholder}'$ be equivalent norms and $C > 0$ such that $C^{-1}\abs{x}' \leq \abs{x} \leq C\abs{x}'$ for all $x \in R$.
    We write $R'$ for \(R\) equipped with the norm $\abs{\placeholder}'$ and $\norm{\placeholder}'$ for the $\ell^1$\=/norm induced by $\abs{\placeholder}'$.
    Clearly $C^{-1} \cdot \norm{c}' \leq \norm{c} \leq C \cdot \norm{c}'$ and
    $C^{-1} \cdot \fillvolume[R'](c) \leq \fillvolume[R](c) \leq C \cdot \fillvolume[R'](c)$.
    Therefore,
    \begin{align*}
        \filling{R,\Gamma}{n}(l)
         &= \sup\{ \fillvolume[R](c) \mid \norm{c} \leq l\} \\
         &\leq \sup\{ C \cdot \fillvolume[R'](c) \mid \norm{c}' \leq C \cdot l\}
        = C \cdot \filling{R',\Gamma}{n}(C \cdot l)\mpct{,}
    \end{align*}
    hence $\filling{R,\Gamma}{n} \preccurlyeq \filling{R',\Gamma}{n}$ and by symmetry $\filling{R',\Gamma}{n} \approx \filling{R,\Gamma}{n}$.
\end{proof}

\section{Cellular arguments in free chain-complexes}\label{sec: cellular arguments}

The goal of this chapter is to develop tools for translating geometric arguments in cell complexes to the setting of free chain complexes.

In the following, let $\Gamma$ be a finitely generated group with a word metric~$d_\Gamma$ and associated word length
$\ell_\Gamma(\gamma)=d_\Gamma(\gamma,e)$.
Let $n \in \bbN \cup \{ \infty \}$ and let $L_\ast$ be a non-negative free $R\Gamma$\=/chain complex such that $L_i$
is finitely generated for $i \leq n$. For each $i \leq n$ let $B_i$ be a finite $R\Gamma$\=/basis of $L_i$.
Then $\Gamma B_i = \{ \gamma b \mid \gamma \in \Gamma,\, b \in B_i \}$ is an \(R\)\=/basis for $L_i$.

\subsection{A geometric structure on chain complexes}

Let $\alpha = \sum_{u \in \Gamma B_i} a_{u} u \in L_i$. The \emph{\(R\)\=/support} of $\alpha$ is the set
\[
    \supp_R(\alpha) \defq \{u \in \Gamma B_i \mid a_{u} \neq 0\}\mpct{.}
\]
Similarly, we define the \emph{$\Gamma$\=/support} of $\alpha$ as
\[
    \supp_\Gamma(\alpha) \defq \{\gamma \in \Gamma \mid a_{\gamma b} \neq 0 \text{ for some } b \in B_i\}.
\]

The functions $d_\Gamma$ and $\ell_\Gamma$ can be extended to
\begin{align*}
    d_\Gamma &\colon \coprod_{i = 0}^n \Gamma B_i \times \coprod_{i = 0}^n \Gamma B_i \to \bbN_0\mpct{,}
    \quad
    \ell_\Gamma \colon \Gamma B_i \to \bbN_0
\end{align*}
by setting $d_\Gamma(\gamma b, \gamma' b') \defq d_\Gamma(\gamma, \gamma')$ and
$\ell_\Gamma(\gamma b) \defq \ell_\Gamma(\gamma)$.
Note that $d_\Gamma$ is only a pseudo-metric on $\coprod_i \Gamma B_i$.

\begin{rem}%
    \label{rem: norm estimate from distance}
    Let $x = \gamma_x b \in \Gamma B_i$ and $y = \gamma_y b' \in \Gamma B_j$.
    Then
    \[
        \ell_\Gamma(y) = \ell_\Gamma(\gamma_y) \leq \ell_\Gamma(\gamma_x) + d_\Gamma(\gamma_x, \gamma_Y) = \ell_\Gamma(x) + d_\Gamma(x, y)\mpct{,}
    \]
    hence $\norm{y}^\Gamma = 1 + \ell_\Gamma(x) \leq d_\Gamma(x,y) = \norm{x}^\Gamma + d_\Gamma(x,y)$.
\end{rem}

\begin{defn}
    Let \(U\) be a subset of the \(R\)\=/basis $\Gamma B_i$. We define a non-oriented graph $\Graph(U)$
    with vertex set \(U\) and edges given by $\{u, v\}$ for $u \neq v \in U$ such that
    $\supp_R(\partial u) \cap \supp_R(\partial v) \neq \emptyset$.
    For an element $\alpha \in L_k$ we define $\Graph(\alpha)$  as $\Graph(\supp_R(\alpha))$.
    We say that \(U\), respectively $\alpha$, is \emph{connected} if $\Graph(U)$, respectively $\Graph(\alpha)$, is connected.
\end{defn}

\begin{rem}
    For $V \subset U$ the graph $\Graph(V)$ is the induced subgraph of $\Graph(U)$ on the vertices \(V\).
    Therefore, if \(U\) is not connected, there exists a decomposition $U = \bigsqcup_j U_j$ such that each $U_j$ is connected and of maximal size corresponding to a connected component of $\Graph(U)$.
    In particular, if $\alpha \in L_i$ is not connected, this decomposition of $\supp_R(\alpha)$ induces a decomposition $\alpha = \sum_{j=1}^m \alpha_j$
    such that $\norm{\alpha} = \sum_{j=1}^{m} \norm{\alpha_j}$. If $\alpha$ is a cycle, then each $\alpha_i$ is also a cycle as no cancellation
    can happen between different connected components.
\end{rem}

\begin{example}\AvoidPageBreak
    If $\Gamma = \quot{\bbZ}{k\bbZ} = \langle t \mid t^k \rangle$ a free $R\Gamma$\=/resolution of \(R\) is given by
    \[
        \cdots \xrightarrow{N \cdot} R\Gamma \xrightarrow{(1 - t) \cdot} R\Gamma \xrightarrow{N \cdot} R\Gamma \xrightarrow{(1 - t) \cdot} R\Gamma \overset{\epsilon}{\longrightarrow} R\mpct{,}
    \]
    where $N = \sum_{i = 0}^{k-1} t^i$. For each \(i\) a free \(R\)\=/basis of $R\Gamma$ is
    $\Gamma B_i = \{t^j \mid 0 \leq j \leq k - 1\}$.
    We have $N \cdot t^j = N$ and $(1-t) \cdot t^j = t^j - t^{j+1}$.
    Therefore, $\Graph(\Gamma B_n)$ is a cycle on \(k\) vertices if \(n\) is odd and a complete graph if \(n\) is even, except for
    $n = 0$, as $\Graph(\Gamma B_0)$ does not contain any edges (see \cref{fig: cyclic group incidence graphs}).
\end{example}

\begin{figure}
    \newcommand\radius{1}%
    \newcommand\rlab{1.3}%
    \newcommand\n{7}%
    \newcommand{\placeDots}{%
        \foreach \i in {1,...,\n} {
            \coordinate (P\i) at ({90 + 360/\n * -(\i - 1)}:\radius);
            \coordinate (L\i) at ({90 + 360/\n * -(\i - 1)}:\rlab);
            \fill (P\i) circle (2pt);
        }
    }%
    \newcommand{\drawLabels}{%
        \foreach \i in {1,...,\n} {
            \pgfmathtruncatemacro{\j}{\i - 1}
            \node[inner sep=0pt] at (L\i) {$t^{\mathrlap{\j}\kern2pt}$};
        }
    }%
    \newcommand{\drawCaption}[1]{%
        \node[below=15pt of current bounding box.south, anchor=center] {#1};
    }%
    \centering
    \begin{tikzpicture}
        \placeDots
        \drawLabels
        \drawCaption{$\Graph(\Gamma B_{0})$}
    \end{tikzpicture}
    \hspace{1cm}
    \begin{tikzpicture}
        \placeDots
        \draw[thick] (P1) \foreach \i in {2,...,\n} { -- (P\i) } -- cycle;
        \drawLabels
        \drawCaption{$\Graph(\Gamma B_{2i + 1})$}
    \end{tikzpicture}
    \hspace{1cm}
    \begin{tikzpicture}
        \placeDots
        \foreach \i in {1,...,\n} {
            \foreach \j in {\i,...,\n} {
                \draw[thick] (P\i) -- (P\j);
            }
        }
        \drawLabels
        \drawCaption{$\Graph(\Gamma B_{2i})$}
    \end{tikzpicture}
    \caption{%
        \label{fig: cyclic group incidence graphs}
        The graphs associated to the chain modules of the standard free $R\Gamma$\=/resolution of $\Gamma = \quot{\bbZ}{7\bbZ}$.
    }
\end{figure}

Assume that $L_\ast$ is the cellular chain complex of a free simplicial $\Gamma$\=/complex \(X\).
Then $\Gamma B_i$ corresponds bijectively to the set of \(i\)\=/cells of \(X\), and $\Graph(\Gamma B_i)$
is the graph whose edges connect cells sharing a common face.
We endow \(X\) with the metric induced by the Euclidean metric on each simplex, which in turn
defines a metric on the sets of simplices via the distance between their barycentres.
This allows us to bound the distance between neighbouring cells.
The next \lcnamecref{lem: norm estimate connected} provides an algebraic analogue of this phenomenon.

\begin{lem}%
    \label{lem: norm estimate connected}
    Let $0 \leq i \leq n$ and $U \subseteq \Gamma B_i$. There exists $K_i \geq 0$ such that
    \begin{refenum}
        \item if $x \in U\noic$ and $v \in \supp_R(\partial x)$,
              then $d_\Gamma(x, v) \leq K_i$ and $\norm{x}^\Gamma \leq \norm{v}^\Gamma + K_i$;
        \item if $x,y \in U\noic$ are connected by an edge in $\Graph(U)$, then
              $d_\Gamma(x, y) \leq 2K_i$ and $\norm{y}^\Gamma \leq \norm{x}^\Gamma + 2\cdot K_i$.
    \end{refenum}
\end{lem}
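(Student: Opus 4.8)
The plan is to distil from the differential a single constant $K_i$ that bounds how far $\partial$ can displace a basis element, and then read off both parts from it. Since $\Graph(U)$ is the induced subgraph of $\Graph(\Gamma B_i)$ on $U$, and since (1) and (2) only constrain elements of $\Gamma B_i$ and $\Gamma B_{i-1}$, it suffices to prove the statement for $U = \Gamma B_i$. The case $i = 0$ is vacuous: as $L_\ast$ is non-negative, $L_{-1} = 0$, so $\partial x = 0$ and $\supp_R(\partial x) = \emptyset$ for every $x \in \Gamma B_0$, and $\Graph(\Gamma B_0)$ has no edges; one takes $K_0 = 0$. So assume $1 \le i \le n$.

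For each of the finitely many $b \in B_i$, write $\partial b = \sum_{b' \in B_{i-1}}\sum_{\mu \in \Gamma} c_{\mu,b'}\,\mu b'$ with $c_{\mu,b'} \in R$; this is a finite sum, so $\supp_\Gamma(\partial b)$ is a finite subset of $\Gamma$, and I would put
\[
    K_i \defq \abs{1}\cdot\max\bigl\{\,\ell_\Gamma(\mu)\ \bigm|\ b\in B_i,\ \mu\in\supp_\Gamma(\partial b)\cup\supp_\Gamma(\partial b)^{-1}\,\bigr\}
\]
(with the convention that the maximum is $0$ if all these supports are empty). Given $x = \gamma b \in \Gamma B_i$, $R\Gamma$-linearity of $\partial$ gives $\partial x = \gamma\,\partial b$, so every $v \in \supp_R(\partial x)$ has the form $v = \gamma\mu b'$ with $b' \in B_{i-1}$ and $\mu \in \supp_\Gamma(\partial b)$; hence $\ell_\Gamma(v) = \ell_\Gamma(\gamma\mu)$, and left-invariance of $d_\Gamma$ yields $\ell_\Gamma(\gamma\mu) \le \ell_\Gamma(\gamma) + \ell_\Gamma(\mu)$ and $\ell_\Gamma(\gamma) \le \ell_\Gamma(\gamma\mu) + \ell_\Gamma(\mu^{-1})$, so $\abs{\ell_\Gamma(x) - \ell_\Gamma(v)} \le K_i/\abs{1} \le K_i$. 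Since a single basis element $w \in \Gamma B_k$ has $\norm{w}^\Gamma = \abs{1}(1 + \ell_\Gamma(w))$, multiplying the previous estimate by $\abs{1}$ gives $\abs{\norm{x}^\Gamma - \norm{v}^\Gamma} \le K_i$. This proves (1), and in fact also the reverse inequalities $\ell_\Gamma(v) \le \ell_\Gamma(x) + K_i$ and $\norm{v}^\Gamma \le \norm{x}^\Gamma + K_i$.

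For (2), if $x, y \in U$ are joined by an edge of $\Graph(U)$, pick $v \in \supp_R(\partial x) \cap \supp_R(\partial y)$; then (1) applied to $y$ together with the reverse inequality for $x$ gives $\ell_\Gamma(y) \le \ell_\Gamma(v) + K_i \le \ell_\Gamma(x) + 2K_i$, and likewise $\norm{y}^\Gamma \le \norm{x}^\Gamma + 2K_i$.

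I do not expect a genuine obstacle. The two points that need care are the equivariant normalisation $x = \gamma b$ and the identification of $\supp_R(\partial x)$ with translates $\gamma\mu b'$, where $\mu$ ranges over the \emph{finite} set $\supp_\Gamma(\partial b)$; that this family is finite — which comes from $B_i$ being finite — is exactly what lets a single $K_i$ work, and it is the algebraic counterpart of the geometric fact that in a cocompact $\Gamma$-complex a cell meets only boundedly many others. The factor $\abs{1} \ge 1$, which equals $1$ for the discrete and Euclidean norms used elsewhere in the paper, is carried along only so that one constant $K_i$ bounds both the word-length and the weighted-norm discrepancies.
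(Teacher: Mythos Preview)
Your proof is correct and follows essentially the same argument as the paper: define $K_i$ as the maximal word length occurring in $\supp_\Gamma(\partial b)$ over the finitely many $b\in B_i$, then use $\Gamma$-equivariance of $\partial$ to compare $\ell_\Gamma(x)$ with $\ell_\Gamma(v)$. Your inclusion of the factor $\abs{1}$ and of the inverse supports makes the weighted-norm bound and the two-sided estimate explicit (the paper leaves these implicit, tacitly using $\ell_\Gamma(h)=\ell_\Gamma(h^{-1})$), which lets you deduce (2) directly from the two-sided form of (1) instead of repeating the computation, but the strategy is identical.
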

\begin{proof}
    In both cases we just show the estimate on the distance, as the statement for the norm follows from
    \cref{rem: norm estimate from distance}.
    Consider the set $A_i \defq \bigcup_{b^{(i)} \in B_i} \supp_\Gamma(\partial b^{(i)})$.
    If $A_i$ is empty then either $B_i$ is empty or $\partial_i = 0$. In either case both statements of the \lcnamecref{lem: norm estimate connected} are vacuously true, so we may set $K_i = 0$.
    From now on let us assume that $A_i$ is non-empty and define $K_i \defq \max\{\ell_\Gamma(h) \mid h \in A_i\} < \infty$.

    Let $x = \gamma_x b_x^{(i)} \in U$ and $v = \gamma_v b_v^{(i-1)} \in \supp_R(\partial x)$.
    Since $\partial x = \gamma_x \partial b_x^{(i)}$, we can write $\gamma_v = \gamma_x h$ for some $h \in A_i(b_x^{(i)})$.
    It follows that $d_\Gamma(x, v) = \ell_\Gamma(h) \leq K_i$.

    Now let $y = \gamma_y b_y^{(i)}$ be another element in \(U\) such that \(x\) and \(y\) are
    connected by an edge in $\Graph(U)$.
    Then there exist $h_x, h_y \in A_i$ and an $R\Gamma$\=/basis
    element $b^{(i-1)} \in B_{i-1}$ such that
    \[
        \gamma_x h_x b^{(i-1)} = \gamma_y h_y b^{(i-1)} \in \supp_R(\partial x) \cap \supp_R(\partial y)\mpct{.}
    \]
    Hence, $\gamma_y = \gamma_x h_x h_y^{-1}$ and therefore
    $d_\Gamma(x,y) \leq \ell_\Gamma(h_x) + \ell_\Gamma(h_y) \leq  2K_i$.
\end{proof}

\begin{cor}%
    \label{cor: estimate weighted to non-weighted}
    Let $0 \leq i \leq n$, $c \in L_i$ be connected, and let $x, y \in \supp_R(c)$.
    \begin{refenum}
        \item $d_\Gamma(x,y) \leq \size{\supp_R(c)} \cdot 2K_i$ and
              $\norm{y}^\Gamma \leq \norm{x}^\Gamma + \size{\supp_R(c)} \cdot 2K_i$;
    \end{refenum}
    Further assume that \(R\) is \(1\)\=/separated.
    \begin{refenum}[resume]
        \item $\norm{y}^\Gamma \leq \norm{x}^\Gamma + 2K_i \norm{c}$ and
              $\norm{c}^\Gamma \leq (\norm{x}^\Gamma + 2K_i \norm{c})\norm{c}$.
    \end{refenum}
\end{cor}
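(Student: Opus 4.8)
The plan is to obtain all three estimates from \cref{lem: norm estimate connected}: the unconditional one by propagating the single\=/edge estimate along a path in \(\Graph(\supp_R(c))\), and the remaining two by an elementary manipulation of the \(\ell^1\)\=/ and weighted norms using \(1\)\=/separation.

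First I would prove the unconditional estimate. Since \(c\) is connected, the graph \(\Graph(c) = \Graph(\supp_R(c))\) is connected, so any two basis elements \(x, y \in \supp_R(c)\) are joined by a repetition\=/free path \(x = u_0, u_1, \dots, u_k = y\) in \(\Graph(\supp_R(c))\); as such a path visits each vertex at most once, \(k \leq \size{\supp_R(c)} - 1\). Consecutive elements \(u_{j-1}, u_j\) are joined by an edge of \(\Graph(\supp_R(c))\), so applying the second part of \cref{lem: norm estimate connected} to each of them and summing gives
\[
    \norm{y}^\Gamma \leq \norm{x}^\Gamma + 2K_i k \leq \norm{x}^\Gamma + \size{\supp_R(c)} \cdot 2K_i .
\]
Now suppose \(R\) is \(1\)\=/separated and write \(c = \sum_{u \in \Gamma B_i} a_u u\). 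For every \(u \in \supp_R(c)\) we have \(a_u \neq 0\), hence \(\abs{a_u} \geq 1\), so \(\size{\supp_R(c)} \leq \sum_{u \in \supp_R(c)} \abs{a_u} = \norm{c}\); substituting this into the previous display yields \(\norm{y}^\Gamma \leq \norm{x}^\Gamma + 2K_i \norm{c}\) for all \(x, y \in \supp_R(c)\). For the last estimate, recall that on the \(R\)\=/basis \(\Gamma B_i\) a basis element \(u = \gamma b\) has \(\norm{u}^\Gamma = 1 + \ell_\Gamma(\gamma)\), so that \(\norm{c}^\Gamma = \sum_{u \in \supp_R(c)} \abs{a_u}\, \norm{u}^\Gamma\). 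Fixing \(x \in \supp_R(c)\) and bounding each \(\norm{u}^\Gamma\) by \(\norm{x}^\Gamma + 2K_i \norm{c}\) via the estimate just obtained gives
\[
    \norm{c}^\Gamma \leq \sum_{u \in \supp_R(c)} \abs{a_u}\bigl(\norm{x}^\Gamma + 2K_i \norm{c}\bigr) = \bigl(\norm{x}^\Gamma + 2K_i \norm{c}\bigr)\norm{c} .
\]

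None of this is delicate. The only point that requires any attention is the passage from the single\=/edge estimate to the path estimate — namely the combinatorial observation that a repetition\=/free path in a graph on \(\size{\supp_R(c)}\) vertices uses at most \(\size{\supp_R(c)} - 1\) edges, so that the additive error accumulated along the path is controlled by \(\size{\supp_R(c)}\), and hence, once \(1\)\=/separation is available, by \(\norm{c}\).
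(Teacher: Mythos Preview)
Your proof is correct and follows essentially the same approach as the paper: the paper phrases the first step as \enquote{the diameter of \(\Graph(c)\) is at most \(\size{\supp_R(c)}\)} and then invokes \cref{lem: norm estimate connected}, which is exactly your path argument stated more tersely, and the remaining two estimates are derived identically via \(1\)\=/separation and termwise bounding of \(\norm{c}^\Gamma\).
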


\begin{proof}
    The diameter of $\Graph(c)$ is at most $\size{\supp_R(c)}$, hence the first claim follows from \cref{lem: norm estimate connected}.
    If \(R\) is \(1\)\=/separated, $\norm{c} \geq \size{\supp_R(c)}$.
    Writing $c = \sum_{\gamma b \in \Gamma B_i} \alpha_{\gamma b} \gamma b$ we obtain
    \begin{align*}
        \norm{c}^\Gamma
        = \sum_{\gamma b} \abs{\alpha_{\gamma b}} \norm{\gamma b}^\Gamma
         &\leq \sum_{\gamma b} \abs{\alpha_{\gamma b}} (\norm{x}^\Gamma + \size{\supp_R(c)} 2K_i) \\
         &\leq (\norm{x}^\Gamma + 2K_i \norm{c}) \norm{c}\mpct{.}\qedhere
    \end{align*}
\end{proof}

\begin{defn}
    Let $\alpha \in L_i$. We say $\alpha$ is \emph{reduced} if there does
    not exist a decomposition $\alpha = \alpha_1 + \alpha_2$ with $\alpha_1 \neq 0$ such that
    $\partial \alpha_1 = 0$.
\end{defn}

\begin{lem}%
    \label{lem: reduced filling estimate}
    There exists a constant $C_i \geq 0$ such that for every reduced chain $\alpha \in L_i$
    and every $u \in \supp_R(\alpha)$ there is $v_u \in \supp_R(\partial \alpha)$ with
    $d_\Gamma(u, v_u) \leq C_i \size{\supp_R(\alpha)}$ and thus
    $\norm{u}^\Gamma \leq \norm{v_u}^\Gamma + C_i \dnorm{\alpha}$.
\end{lem}
\begin{proof}
    Decompose $\alpha$ into its connected components $\alpha = \alpha_1 + \dots + \alpha_k$.
    Because $\alpha$ is reduced, $\partial \alpha_p \neq 0$ for every $1 \leq p \leq n$. Hence, we can
    pick vertices $u_p$ of $\Graph(\alpha_p)$ such that
    $\supp_R(\partial u_p) \cap \supp_R(\partial \alpha_p) \neq \emptyset$.
    Let $v_p \in \supp_R(\partial u_p) \cap \supp_R(\partial \alpha_p)$ be such an element in the intersection.
    By \cref{lem: norm estimate connected}, $\norm{u_p}^\Gamma \leq \norm{v_p}^\Gamma + K_i$ and by \cref{cor: estimate weighted to non-weighted},
    \begin{align*}
        \norm{u}^\Gamma \leq \norm{u_p}^\Gamma + \size{\supp_R(\alpha_p)} \cdot 2K_i \leq \norm{v_p}^\Gamma + K_i + \size{\supp_R(\alpha)} \cdot 2K_i
    \end{align*}
    for each $u \in \supp_R(\alpha_p)$. A suitable constant is therefore given by $C_i \defq 3K_i$.
\end{proof}

\begin{lem}%
    \label{lem: locally finite graph}
    $\Graph(\Gamma B_i)$ is locally finite for all $0 \leq i \leq n$.
\end{lem}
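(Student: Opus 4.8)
The plan is to show directly that every vertex of \(\Graph(\Gamma B_i)\) has finite degree, which is exactly the statement. First I would dispose of the case \(i = 0\): here \(\partial_0 = 0\), so \(\supp_R(\partial u) = \emptyset\) for every \(u \in \Gamma B_0\), and \(\Graph(\Gamma B_0)\) has no edges at all. So from now on assume \(1 \le i \le n\).

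Next I would fix a vertex \(x = \gamma b \in \Gamma B_i\) with \(\gamma \in \Gamma\) and \(b \in B_i\), and observe that \(\supp_R(\partial x) = \gamma \cdot \supp_R(\partial b)\) is finite: indeed \(\partial b\) is a single element of the free \(R\)\=/module \(L_{i-1}\) (which has \(R\)\=/basis \(\Gamma B_{i-1}\), since \(i-1 \le n\)), hence has finite \(R\)\=/support. By definition of \(\Graph\), every neighbour \(y\) of \(x\) satisfies \(\supp_R(\partial y) \cap \supp_R(\partial x) \neq \emptyset\), so it suffices to prove that for each fixed \(v \in \supp_R(\partial x)\) the set \(\{\, y \in \Gamma B_i \mid v \in \supp_R(\partial y)\,\}\) is finite; summing over the finitely many such \(v\) then bounds the degree of \(x\).

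Finally, to bound this set I would write \(v = \eta\, b^{(i-1)}\) and \(y = \mu\, b^{(i)}\) with \(\eta, \mu \in \Gamma\), \(b^{(i-1)} \in B_{i-1}\), \(b^{(i)} \in B_i\). From \(v \in \supp_R(\partial y) = \mu \cdot \supp_R(\partial b^{(i)})\) one gets \(\mu^{-1}\eta\, b^{(i-1)} \in \supp_R(\partial b^{(i)})\), hence \(\mu^{-1}\eta \in \supp_\Gamma(\partial b^{(i)}) = A_i(b^{(i)})\) in the notation of the proof of \cref{lem: norm estimate connected}. Since \(\partial b^{(i)}\) is a single module element, \(A_i(b^{(i)})\) is finite, so \(\mu\) lies in the finite set \(\eta\, A_i(b^{(i)})^{-1}\); as \(b^{(i)}\) ranges over the finite set \(B_i\), only finitely many \(y\) occur. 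This finishes the argument.

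I do not expect a genuine obstacle: the entire content is that the boundary of a single basis element has finite \(R\)\=/support and finite \(\Gamma\)\=/support, which combined with \(\size{B_i} < \infty\) immediately gives local finiteness. One could alternatively note that \(\Graph(\Gamma B_i)\) carries a \(\Gamma\)\=/action by graph automorphisms with only \(\size{B_i}\) vertex orbits, so that it is enough to check finite degree at the representatives \(b \in B_i\); but this is merely a streamlining of the same computation.
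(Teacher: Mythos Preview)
Your proof is correct and rests on the same observation as the paper's: boundaries of basis elements have finite \(\Gamma\)\=/support and \(B_i\) is finite. The paper takes the route you describe as an alternative at the end: it uses the \(\Gamma\)\=/action to reduce to vertices \(b \in B_i\), then invokes \cref{lem: norm estimate connected} to get \(\ell_\Gamma(\gamma) \le 2K_i\) for any neighbour \(\gamma b'\), and finishes by appealing to finite generation of \(\Gamma\). Your direct argument, fixing a \(v \in \supp_R(\partial x)\) and bounding its preimage via the sets \(A_i(b^{(i)})\), is slightly more elementary in that it never needs the word length or the hypothesis that \(\Gamma\) is finitely generated; the paper's version, on the other hand, packages the estimate neatly through the already-established constant \(K_i\).
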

\begin{proof}
    Because the differentials of $L_\ast$ are $\Gamma$\=/equivariant, $\Gamma$ acts on $\Graph(\Gamma B_i)$ by graph automorphisms.
    It therefore suffices to show that
    \begin{align*}
        B_{\Graph(\Gamma B_i)}(b, 1) = \{\gamma b' \in \Gamma B_i \mid \{\gamma b', b\} \text{ is an edge in } \Graph(\Gamma B_i)\}
    \end{align*}
    is finite for every $b \in B_i$.
    To this end let $\gamma b' \in \Gamma B_i$ be connected to \(b\) by an edge in $\Graph(\Gamma B_i)$.
    Then by \cref{lem: norm estimate connected} we have that
    \begin{align*}
        \ell_\Gamma(\gamma) = \ell_\Gamma(\gamma b') \leq \ell_\Gamma(b) + 2K_i = 2K_i
    \end{align*}
    and hence $\ell_\Gamma(\gamma) \leq 2K_i$. As $\Gamma$ is finitely generated and $B_i$ is finite,
    we conclude that $B_{\Graph(\Gamma B_i)}(b,1)$ is finite.
\end{proof}

For $i = 0$ the above statement is trivially satisfied, as $B_{-1}$ is empty and therefore $\Graph(\Gamma B_0)$ contains no edges at all.

\begin{lem}\AvoidPageBreak%
    \label{lem: finite neighbouring cells in above degree}
    Let $0 \leq i \leq n - 1$ and $u \in \Gamma B_i$. Then the set
    \begin{align*}
        S(u) \defq \{v \in \Gamma B_{i+1} \mid u \in \supp_R(\partial v)\}
    \end{align*}
    is finite.
\end{lem}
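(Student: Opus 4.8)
The plan is to reduce the statement to the finiteness of metric balls in \(\Gamma\), in exactly the same spirit as the proof of \cref{lem: locally finite graph}. First I would record the two finiteness inputs coming from the hypothesis \(i \leq n-1\): since \(i+1 \leq n\), the module \(L_{i+1}\) is finitely generated, so the chosen \(R\Gamma\)\=/basis \(B_{i+1}\) is finite; and every \(v \in \Gamma B_{i+1}\) is written uniquely as \(v = \gamma b\) with \(\gamma \in \Gamma\) and \(b \in B_{i+1}\).

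The key step is to bound the word length of elements of \(S(u)\). Given \(v \in S(u)\), the defining condition \(u \in \supp_R(\partial v)\) is precisely the hypothesis of \cref{lem: norm estimate connected} applied in degree \(i+1\) (legitimate because \(i+1 \leq n\)), with \(x = v\) playing the role of the top-degree cell and \(u\) the role of its boundary cell. The first part of that \lcnamecref{lem: norm estimate connected} then yields
\[
    \ell_\Gamma(\gamma) = \ell_\Gamma(v) \leq \ell_\Gamma(u) + K_{i+1}\mpct{.}
\]
So the \(\Gamma\)\=/component of every element of \(S(u)\) lies in the closed ball of radius \(\ell_\Gamma(u) + K_{i+1}\) about the identity.

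To conclude, since \(\Gamma\) is finitely generated this ball is finite, and \(B_{i+1}\) is finite by the first step; hence there are only finitely many pairs \((\gamma, b)\), and therefore only finitely many \(v = \gamma b \in \Gamma B_{i+1}\), satisfying the above bound. A fortiori \(S(u)\) is finite.

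I do not expect a real obstacle here: once \cref{lem: norm estimate connected} is in hand this is essentially a one-line deduction. The only point to be attentive to is the index bookkeeping — we invoke the degree\nobreakdash-\((i+1)\) instance of that \lcnamecref{lem: norm estimate connected}, which is exactly why the hypothesis \(i \leq n-1\) (rather than merely \(i \leq n\)) is needed, guaranteeing both that \(K_{i+1}\) is defined and that \(B_{i+1}\) is finite.
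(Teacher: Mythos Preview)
Your argument is correct. It differs slightly from the paper's route: rather than invoking \cref{lem: norm estimate connected} directly, the paper observes that any two elements \(v_0, w \in S(u)\) share \(u\) in their boundaries and are hence joined by an edge in \(\Graph(\Gamma B_{i+1})\), so \(S(u) \subseteq B_{\Graph(\Gamma B_{i+1})}(v_0,1)\), which is finite by \cref{lem: locally finite graph}. Your approach bypasses that intermediate lemma and applies the first clause of \cref{lem: norm estimate connected} directly to bound \(\ell_\Gamma(v)\); since \cref{lem: locally finite graph} is itself proved from the same norm estimate, your route is one step shorter and arguably more transparent. The paper's version has the mild advantage of packaging the ``finite ball'' argument once in \cref{lem: locally finite graph} and then reusing it, but substantively the two proofs are the same.
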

\begin{proof}
    If $S(u)$ is empty or consists of only one element, there is nothing to show.
    Otherwise, fix some $v_0 \in S(u)$. For any $w \in S(u)$ such that $w \neq v_0$, the graph $\Graph(\Gamma B_{i+1})$ has $\{v_0,w\}$ as an edge.
    Therefore, $S(u)$ is contained in $B_{\Graph(\Gamma B_{i+1})}(v_0, 1)$ which is finite by \cref{lem: locally finite graph}.
\end{proof}

\subsection{Non-equivariant subcomplexes}

We now study algebraic analogues of subcomplexes of a simplicial $\Gamma$\=/complex.
For $l > 0$ consider the collection
\begin{align*}
    \connSupp^i_l \defq \{ U \subset \Gamma B_i \mid \size{U} \leq l,\,U \text{ connected}\}\mpct{.}
\end{align*}
An element $U \in \connSupp^i_l$ can be thought of as giving a basis to a connected \(R\)\=/subspaces
of $L_i$ with \(R\)\=/dimension at most \(l\).
The group $\Gamma$ acts on $\connSupp^i_l$ by left multiplication, that is,
$\gamma U \defq \{\gamma u \mid u \in U\}$.

\begin{prop}\AvoidPageBreak%
    \label{prop: finite orbits of finite connected subcomplexes}
    Let $l \geq 0$ and $0 \leq i \leq n$. Then $\lquot{\connSupp^i_l}{\Gamma}$ is finite.
\end{prop}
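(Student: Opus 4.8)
The plan is to combine the local finiteness of \(\Graph(\Gamma B_i)\) established in \cref{lem: locally finite graph} with the \(\Gamma\)-action on \(\connSupp^i_l\), reducing everything to the observation that a finite set has only finitely many subsets. First I would record a purely graph-theoretic fact: a connected graph on at most \(l\) vertices has diameter at most \(l-1\), since a geodesic between two of its vertices passes through at most \(l\) vertices. Hence, whenever \(U \in \connSupp^i_l\) is nonempty and \(u \in U\), all of \(U\) lies in the ball \(B_{\Graph(\Gamma B_i)}(u, l-1)\) around \(u\) in \(\Graph(\Gamma B_i)\).

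Next I would use the \(\Gamma\)-action to normalise orbit representatives. Given a nonempty \(U \in \connSupp^i_l\), choose an element \(u = \gamma b \in U\) with \(\gamma \in \Gamma\) and \(b \in B_i\); then \(\gamma^{-1}U\) lies in the same \(\Gamma\)-orbit, is again an element of \(\connSupp^i_l\), and contains the basis element \(b \in B_i\). Combined with the diameter bound, this shows that every nonempty \(\Gamma\)-orbit in \(\connSupp^i_l\) has a representative contained in the fixed set
\[
    N \defq \bigcup_{b \in B_i} B_{\Graph(\Gamma B_i)}(b, l-1).
\]

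It then remains to check that \(N\) is finite. By \cref{lem: locally finite graph} the graph \(\Graph(\Gamma B_i)\) is locally finite, and in a locally finite graph every ball of finite radius is finite by an immediate induction on the radius; since \(B_i\) is finite, \(N\) is a finite union of finite sets. Therefore there are only finitely many subsets of \(N\), hence only finitely many possible normalised representatives, and adding the at most one orbit of the empty set (which only matters when \(l = 0\)) shows that \(\lquot{\connSupp^i_l}{\Gamma}\) is finite. I do not expect a real obstacle here: the only points needing a line of justification -- the diameter bound for connected graphs and the finiteness of finite-radius balls in a locally finite graph -- are elementary, and the substantive input, local finiteness, has already been supplied by \cref{lem: locally finite graph}.
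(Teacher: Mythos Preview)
Your proposal is correct and follows essentially the same strategy as the paper: normalise a nonempty \(U\) so that it contains some \(b \in B_i\), bound how far the rest of \(U\) can stray, and conclude that a representative lies in a fixed finite set. The only cosmetic difference is that you invoke \cref{lem: locally finite graph} to bound \(U\) by a graph ball, whereas the paper applies \cref{lem: norm estimate connected} directly to bound the word lengths \(\ell_\Gamma(\gamma')\) of elements \(\gamma' b' \in U\); since the proof of \cref{lem: locally finite graph} is itself just \cref{lem: norm estimate connected}, the two arguments amount to the same thing.
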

\begin{proof}
    Let $U \in \connSupp^i_l$. If $U = \emptyset$, the orbit $\Gamma U$ just consists of \(U\). If $U \neq \emptyset$
    let $\gamma b \in U$. By considering $U' \defq \gamma^{-1}U$, we may assume that $\gamma = 1$ and $b \in U$.
    Because $\size{U} \leq l$, the diameter of $\Graph(U)$ is at most \(l\). Repeated application of
    \cref{lem: norm estimate connected} yields that for any $\gamma' b' \in U$ we have
    \begin{align*}
        \ell_\Gamma(\gamma') = \ell_\Gamma(\gamma'b')
         &\leq \ell_\Gamma(b) + 2K_i l = 2K_i l\mpct{.}
    \end{align*}
    Therefore, \(U\) is a subset of the finite set $\{\gamma \in \Gamma \mid l_\Gamma(\gamma) \leq 2K_i l\}B_i$.
\end{proof}

\begin{defn}
    A \emph{basis collection} is a collection $U_\ast$ of subsets $U_i \subseteq \Gamma B_i$ for $i \geq 0$.
    We say a basis collection $U_\ast$ is \emph{closed under differentials} if
    $\supp_R(\partial u) \subseteq U_{i-1}$ for all $u \in U_i$ and all $i > 0$.
    If $U_\ast$ is closed under differentials, the modules
    $L_i(U_\ast) \defq \langle U_i\rangle_R$ assemble to an \(R\)\=/subcomplex $L_\ast(U_\ast)$ of $L_\ast$.

    A basis collection $U_\ast$ is called \emph{at most \(k\)\=/dimensional} if $U_i = \emptyset$ for all $i > k$
    and \emph{\(k\)\=/dimensional} if additionally $U_k \neq \emptyset$.
    It is said to be of \emph{finite type} if $U_i$ is finite for all $i \geq 0$.
\end{defn}

\begin{rem}%
    \label{rem: subset as collection}
    We may understand a single subset $U \subseteq \Gamma B_k$ as a \(k\)\=/dimensional basis collection by choosing the empty subset
    of $\Gamma B_j$ for $j \neq k$. By abuse of notation, we also denote this collection by \(U\).
\end{rem}

Let $j \geq 0$ and $U_j \subseteq \Gamma B_j$ be some subset. We inductively define
\begin{align*}
    P^j_i(U_j) \defq
    \begin{cases}
        0                                                & \text{if } i > j\mpct{;} \\
        U_j                                              & \text{if } i = j\mpct{;} \\
        \bigcup_{u \in P^j_{i+1}(U)} \supp_R(\partial u) & \text{if } i < j\mpct{,}
    \end{cases}
\end{align*}
which is closed under differentials.
More generally, if we are given a collection $U_\ast$ of subsets $U_i \subseteq \Gamma B_i$ for all $i \geq 0$, we define
$P_i(U_\ast) \defq \bigcup_{j \geq 0} P^j_i(U_j)$. If $U_\ast$ is already closed under differentials then $P_\ast(U_\ast) = U_\ast$.

If $L_\ast$ is the cellular chain complex of some simplicial complex \(X\) a basis collection $U_\ast$ corresponds to a choice of subsets of \(i\)\=/cells
of \(X\) for each $i > 0$. Then $L_\ast(P_\ast(U_\ast))$ is the cellular chain complex of the subcomplex generated by these cells.

\subsection{Thickening of subcomplexes}
Next we describe a procedure which can be understood as an algebraic analogue of the following geometric construction.
Given a \(k\)\=/dimensional subcomplex \(Y\) of a simplicial complex \(X\), we define $N(Y)$ to be the subcomplex consisting of
\(Y\) together with all cells that have a boundary face in \(Y\).

Let $K_\ast$ be an \(R\)\=/subcomplex of $L_\ast$ equipped with a basis $U_i \subseteq \Gamma B_i$ for all \(i\).
First we consider the sets
\begin{align*}
    \tilde{N}_i(U_\ast) \defq
    \begin{cases}
        U_0                                   & \text{if } i = 0\mpct{;} \\
        U_i \cup \bigcup_{u \in U_{i-1}} S(u) & \text{if } i > 0\mpct{,}
    \end{cases}
\end{align*}
where $S(u)$ is as in \cref{lem: finite neighbouring cells in above degree}.
Note that if $U_\ast$ is at most \(k\)\=/dimensional, $\tilde{N}_i(U)$ is empty for $i > k + 1$.

On the level of simplicial complexes $\tilde{N}_i(U_\ast)$ corresponds to the collection of \(i\)\=/cells intersecting the
subcomplex \(Y\). In general this collection is not closed under differentials. For example let \(X\) be a simplicial tripod and
\(Y\) be the central vertex. Then $\tilde{N}_0 = Y_0$ and $\tilde{N}_1$ is the collection of all three edges.
However, these
sets do not form a simplicial complex because the outer vertices are missing (see \cref{fig: tripod}).
\begin{figure}
    \newcommand\radius{1}
    \begin{tikzpicture}
        \coordinate (T) at (0,0);

        \foreach \a/\n in {120/A,240/B,360/C} {
            \coordinate (\n) at ({\radius*sin(\a)}, {\radius*cos(\a)});
        }

        \draw[thick] (T) -- (A);
        \draw[thick] (T) -- (B);
        \draw[thick] (T) -- (C);

        \fill[red] (T) circle (2pt);

        \foreach \p in {A,B,C} {
            \fill[black] (\p) circle (2pt);
        }
    \end{tikzpicture}%
    \hspace{1cm}%
    \begin{tikzpicture}
        \coordinate (T) at (0,0);

        \foreach \a/\n in {120/A,240/B,360/C} {
            \coordinate (\n) at ({\radius*sin(\a)}, {\radius*cos(\a)});
        }

        \draw[red,thick] (T) -- (A);
        \draw[red,thick] (T) -- (B);
        \draw[red,thick] (T) -- (C);

        \fill[red] (T) circle (2pt);
        \foreach \p in {A,B,C} {
            \draw[black,fill=white] (\p) circle (2pt);
        }
    \end{tikzpicture}
    \caption{%
        \label{fig: tripod}
        On the left a tripod with subcomplex \(Y\) given by the central vertex. On the right the set $\tilde{N}_\ast(Y)$.
    }
\end{figure}
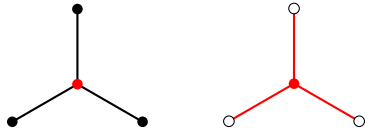
Applying $P_i(-)$ resolves this deficit.

For a chain complex $C_\ast$ we write $C_\ast^{(k)}$ for its \(k\)\=/skeleton, that is, $C_i^{(k)}$ is $C_i$ for $i\le k$ and \(0\) for $i>k$.
For a basis collection $U_\ast$, we write $U_{\ast \leq k}$ for the truncated collection with the empty set in degrees $i > k$.
Note that $L_\ast(U_{\leq k}) = L_\ast(U_\ast)^{(k)}$.

\begin{defn}
    Let $U_\ast$ be a basis collection. Let $k \in \bbN_0 \cup \{\infty\}$. For $j \geq 0$
    we inductively define the \emph{\(j\)\=/step \(k\)\=/thickening} of \(U\) as the basis collection
    \begin{align*}
        N^{k,j}_i(U_\ast) \defq
        \begin{cases}
            P_\ast(U_\ast)                                                                                      & \text{if } j = 0\mpct{;} \\
            P_i(\tilde{N}_{\ast \leq k}(P_\ast(U_\ast))) = \bigcup_{l = 0}^k P^l_i(\tilde{N}_l(P_\ast(U_\ast))) & \text{if } j = 1\mpct{;} \\
            N^{k,1}_\ast(N^{k,j-1}_\ast(U_\ast))                                                                & \text{if } j > 1\mpct{.}
        \end{cases}
    \end{align*}
    The associated \(R\)\=/subcomplexes are denoted by $N^{k,j}L_\ast(U)$.
\end{defn}

\begin{lem}%
    \label{lem: Criterion tilde N and P finite}
    Let $U_\ast\Noic$ be a basis collection. Then
    \begin{refenum}
        \item\label{lem: Criterion tilde N finite}
              $\tilde{N}_i(U_\ast)$ is finite if either
              \begin{condenum}
                  \item $i = 0$ and $U_0\Noic$ is finite, or
                  \item $1 \leq i \leq n$ and both $U_i\Noic$ and $U_{i-1}\Noic$ are finite;
              \end{condenum}
        \item\label{lem: Criterion P finite}
              $P_i(U_\ast)$ is finite if $U_j\Noic$ is finite for all $j \geq i$.
    \end{refenum}
\end{lem}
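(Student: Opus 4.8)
The plan is to treat the two assertions separately; both come down to the single elementary fact that the boundary of one basis element has finite $R$-support, combined with the finiteness statements already established in this subsection. For part (i), if $i=0$ then $\tilde{N}_0(U_\ast)=U_0$ by definition and there is nothing to prove. If $1\le i\le n$, I would unwind $\tilde{N}_i(U_\ast)=U_i\cup\bigcup_{u\in U_{i-1}}S(u)$: the term $U_i$ is finite by hypothesis, and since $0\le i-1\le n-1$, \cref{lem: finite neighbouring cells in above degree} applies to each $u\in U_{i-1}\subseteq\Gamma B_{i-1}$ and shows $S(u)$ is finite, so $\bigcup_{u\in U_{i-1}}S(u)$ is a finite union of finite sets. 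Hence $\tilde{N}_i(U_\ast)$ is finite.

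For part (ii), I would first record that for any single $u\in\Gamma B_j$ the element $\partial u$ is a single element written in the $R$-basis, so $\supp_R(\partial u)$ is finite. Unwinding the definitions of $P_i$ and $P^j_i$ gives the recursion $P_i(U_\ast)=U_i\cup\bigcup_{u\in P_{i+1}(U_\ast)}\supp_R(\partial u)$. Thus if $P_{i+1}(U_\ast)$ is finite then the right-hand side is the union of $U_i$ with a finite union of finite sets, hence finite; a downward induction on $i$ then yields the claim. Concretely, and making the base case explicit, one can instead fix $j\ge i$ and prove by downward induction from $m=j$ to $m=i$ that $P^j_m(U_j)$ is finite — base case $P^j_j(U_j)=U_j$, inductive step $P^j_m(U_j)=\bigcup_{u\in P^j_{m+1}(U_j)}\supp_R(\partial u)$, a finite union of finite sets — and then take the union $P_i(U_\ast)=\bigcup_{j\ge i}P^j_i(U_j)$.

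I expect the only point that needs care to be the passage in part (ii) from finiteness of each individual $P^j_i(U_j)$ to finiteness of their union over all $j\ge i$: this is immediate whenever $U_\ast$ is finite-dimensional — which is the case in every application of this lemma, for instance in the $j$-step $k$-thickening, where the collections are truncated at degree $k$ — so I would either invoke finite-dimensionality where it is used or carry it along as a standing hypothesis, since the downward recursion for $P_\ast$ has no base case otherwise. Part (i) and the fixed-degree half of part (ii) are then routine bookkeeping with \cref{lem: finite neighbouring cells in above degree} and the finiteness of $\supp_R(\partial u)$.
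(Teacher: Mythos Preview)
Your argument is essentially the paper's: part~(i) is reduced to \cref{lem: finite neighbouring cells in above degree}, and part~(ii) is the observation that each \(P^j_i(U_j)\) is finite when \(U_j\) is, followed by taking the union over \(j\ge i\).

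You are right to single out the union step. The paper's proof simply asserts that \(P_i(U_\ast)\) is finite once every \(P^j_i(U_j)\) is, without addressing why only finitely many summands contribute; as you observe, this genuinely needs \(U_\ast\) to be finite-dimensional (otherwise one can take \(U_j=\{b_j\}\) for a single basis element in each degree and obtain an infinite union). So the lemma as stated is slightly loose, and your proposal to carry finite-dimensionality as a hypothesis---which holds in every application, in particular in the \(k\)-thickening---is the correct fix. In this respect your write-up is more careful than the paper's own proof.
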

\begin{proof}
    The first implication follows directly from \cref{lem: finite neighbouring cells in above degree}.
    For the second statement note that $P_i(U_\ast)$ is finite if $P^j_i(U_j)$
    is finite for all $j \geq 0$. However, $P^j_i(U_j)$ is empty for $j < i$. If $j \geq i$ then
    by construction $P^j_i(U_j)$ is finite if and only if $U_j$ is finite.
\end{proof}

An immediate consequence is the following \lcnamecref{lem: thickening finite type}.
\begin{lem}%
    \label{lem: thickening finite type}
    Let $0 \leq k \leq n$ and $U_\ast\Noic$ be an at most \(k\)\=/dimensional basis collection of finite type.
    Then $N^{k,1}_i(U_\ast)$ is at most \(k\)\=/dimensional and of finite type.
\end{lem}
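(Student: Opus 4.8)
The statement is bookkeeping once one unwinds the definition
\[
    N^{k,1}_\ast(U_\ast) = P_\ast\bigl(\tilde{N}_{\ast \leq k}(P_\ast(U_\ast))\bigr),
\]
so the plan is to propagate the two properties ``at most \(k\)\=/dimensional'' and ``of finite type'' through the three operations applied in turn: first \(P_\ast(-)\), then the truncated thickening \(\tilde{N}_{\ast \leq k}(-)\), then \(P_\ast(-)\) again. The one fact about \(P_\ast(-)\) used twice is that it preserves both properties: since each \(P^j_i(U_j)\) is supported in degrees \(i \leq j\) and is empty whenever \(U_j\) is, an at most \(k\)\=/dimensional collection stays at most \(k\)\=/dimensional, while finite type is preserved by \cref{lem: Criterion P finite} (whose hypothesis, that \(U_j\) be finite for every \(j \geq i\), is automatic for a finite-type collection).

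First I would apply this to conclude that \(V_\ast \defq P_\ast(U_\ast)\) is an at most \(k\)\=/dimensional basis collection of finite type (and, incidentally, closed under differentials). Next I would treat \(\tilde{N}_{\ast \leq k}(V_\ast)\): it is at most \(k\)\=/dimensional by construction, since the truncation at level \(k\) discards exactly the degree-\((k+1)\) part that \(\tilde{N}\) would otherwise produce from \(V_k\). For finite type, in degree \(0\) we have \(\tilde{N}_0(V_\ast) = V_0\), which is finite, and for \(1 \leq i \leq k\) finiteness follows from \cref{lem: Criterion tilde N finite}; this uses that \(V_i\) and \(V_{i-1}\) are finite and that \(i \leq n\), which holds because \(i \leq k \leq n\). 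Hence \(W_\ast \defq \tilde{N}_{\ast \leq k}(V_\ast)\) is again an at most \(k\)\=/dimensional basis collection of finite type, and applying the \(P_\ast(-)\) step once more gives that \(N^{k,1}_\ast(U_\ast) = P_\ast(W_\ast)\) is at most \(k\)\=/dimensional and of finite type, as claimed.

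I do not expect a genuine obstacle here. The only two points requiring attention are that the dimension must not drift upward — which is ensured precisely by using \(\tilde{N}_{\ast \leq k}\) rather than \(\tilde{N}_\ast\) — and that the finiteness inputs \cref{lem: Criterion P finite} and \cref{lem: Criterion tilde N finite} (the latter resting on \cref{lem: finite neighbouring cells in above degree}) are only available in degrees at most \(n\), so the hypothesis \(k \leq n\) is exactly what makes them applicable throughout the relevant range \(0 \leq i \leq k\).
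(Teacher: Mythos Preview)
Your proposal is correct and follows exactly the approach the paper intends: the paper simply declares this lemma an immediate consequence of \cref{lem: Criterion tilde N and P finite}, and what you have written is a careful unwinding of that immediate consequence through the three layers \(P_\ast\), \(\tilde{N}_{\ast \leq k}\), \(P_\ast\). There is nothing to add or correct.
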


\begin{prop}%
    \label{prop: step thickening finite type}
    Let $0 \leq k \leq n$. Let $U_\ast\Noic$ be an at most \(k\)\=/dimensional basis collection of finite type.
    Then $N^{k,j}_\ast(U_\ast)$ is at most \(k\)\=/dimensional and of finite type for all $j \geq 0$.
\end{prop}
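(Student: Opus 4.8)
The plan is to prove this by a straightforward induction on \(j \ge 0\), with \cref{lem: thickening finite type} supplying the whole inductive step.

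For the base case \(j = 0\) one has \(N^{k,0}_\ast(U_\ast) = P_\ast(U_\ast)\), and I would observe that since \(U_\ast\) is at most \(k\)\=/dimensional we have \(U_j = \emptyset\) for \(j > k\), so — using that \(P^j_i(U_j)\) is empty whenever \(i > j\) — the set \(P_i(U_\ast) = \bigcup_{j=0}^{k} P^j_i(U_j)\) is empty for \(i > k\); hence \(P_\ast(U_\ast)\) is at most \(k\)\=/dimensional. Finiteness of each \(P_i(U_\ast)\) is then exactly \cref{lem: Criterion P finite} applied to the finite-type collection \(U_\ast\). The case \(j = 1\) is literally the statement of \cref{lem: thickening finite type}.

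For the inductive step \(j > 1\) I would unwind the definition \(N^{k,j}_\ast(U_\ast) = N^{k,1}_\ast\bigl(N^{k,j-1}_\ast(U_\ast)\bigr)\): by the induction hypothesis \(N^{k,j-1}_\ast(U_\ast)\) is an at most \(k\)\=/dimensional basis collection of finite type, so \cref{lem: thickening finite type} applies to it and yields that \(N^{k,1}_\ast\bigl(N^{k,j-1}_\ast(U_\ast)\bigr)\) is again at most \(k\)\=/dimensional and of finite type, closing the induction.

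I do not expect a genuine obstacle here: all the real content already sits in \cref{lem: finite neighbouring cells in above degree} (hence \cref{lem: Criterion P finite}) and in the bookkeeping \cref{lem: thickening finite type}, and this proposition is just their iteration. The only places needing a second's care are verifying that the \(j = 0\) layer \(P_\ast(U_\ast)\) does not push the dimension above \(k\), and checking that the recursive clause in the definition of \(N^{k,j}\) feeds a collection that genuinely satisfies the hypotheses of \cref{lem: thickening finite type} — both immediate from the definitions.
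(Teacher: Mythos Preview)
Your proposal is correct and follows essentially the same route as the paper: induction on \(j\) with \cref{lem: thickening finite type} providing the inductive step. The paper's proof is more terse (it dispatches the ``at most \(k\)-dimensional'' claim in one phrase and does not spell out the \(j=0\) base case), but your more explicit treatment of \(P_\ast(U_\ast)\) via \cref{lem: Criterion P finite} is exactly what is implicitly behind that step.
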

\begin{proof}
    By definition, $N^{k,j}_\ast(U_\ast)$ is always at most \(k\)\=/dimensional.
    Therefore, the claim follows from \cref{lem: thickening finite type} by induction on \(j\).
\end{proof}

\begin{thm}\enlargethispage{\baselineskip}%
    \label{thm: connectivity filtration}
    Let $1 \leq m \leq k \leq n$ and $U \in \connSupp^m_l$. Assume that
    \begin{condenum}
        \item\label{cond: U non-empty} $U\noic$ is non-empty;
        \item\label{cond: no-degenerate} $\partial b \neq 0$ for all $b \in B_i\Noic$ and $1 \leq i \leq k$;
        \item\label{cond: B1-connected} $\Graph(\Gamma B_1)$ is connected;
        \item\label{cond: B0-surjective} $\bigcup_{u \in \Gamma B_1} \supp_R(\partial u) = \Gamma B_0$.
    \end{condenum}
    Then $\colim_{j} N^{k,j} L_\ast(U)^{(k)} = L_\ast^{(k)}$.
\end{thm}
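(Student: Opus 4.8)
The plan is to identify the colimit with an increasing union of subcomplexes and then to show, degree by degree starting from degree $0$, that the underlying basis collection eventually exhausts $\Gamma B_i$ for every $0\le i\le k$. Concretely, since $U\subseteq\Gamma B_m$ with $m\le k$, the operators $P_\ast(-)$, $\tilde N_{\ast\le k}(-)$ and hence $N^{k,1}_\ast(-)$ are monotone for inclusion of basis collections, one checks directly that $V_\ast\subseteq N^{k,1}_\ast(V_\ast)$ for every at most $k$-dimensional collection $V_\ast$, and each $N^{k,j}_\ast(U)$ is closed under differentials (being of the form $P_\ast(-)$); so $\bigl(N^{k,j}L_\ast(U)\bigr)_{j\ge0}$ is an increasing chain of $R$-subcomplexes of $L_\ast^{(k)}$ whose colimit is $L_\ast(W_\ast)$ for the (again differential-closed) collection $W_\ast\defq\bigcup_{j\ge0}N^{k,j}_\ast(U)$, and it suffices to prove $W_i=\Gamma B_i$ for $0\le i\le k$.

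The substantial part is degrees $0$ and $1$. By \cref{cond: no-degenerate}, $\partial u\ne0$ and hence $\supp_R(\partial u)\ne\emptyset$ for every $u\in\Gamma B_i$ with $1\le i\le k$; iterating this down the definition of $P_\ast$ and using \cref{cond: U non-empty} gives $P_i(U)\ne\emptyset$ for $0\le i\le m$, so $W_0\supseteq P_0(U)\ne\emptyset$. Pick $u_0\in P_0(U)$; by \cref{cond: B0-surjective} there is $e_0\in\Gamma B_1$ with $u_0\in\supp_R(\partial e_0)$, i.e.\ $e_0\in S(u_0)$, so $e_0\in\tilde N_1(P_\ast(U))\subseteq N^{k,1}_1(U)\subseteq W_1$. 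To capture an arbitrary $e\in\Gamma B_1$, I would use \cref{cond: B1-connected} to pick a path $e_0=f_0,f_1,\dots,f_r=e$ in $\Graph(\Gamma B_1)$ and prove $f_t\in N^{k,1+t}_1(U)$ by induction on $t$: if $f_{t-1}\in N^{k,t}_1(U)$ then closure under differentials puts $\supp_R(\partial f_{t-1})\subseteq N^{k,t}_0(U)$, so any $v$ in the nonempty set $\supp_R(\partial f_{t-1})\cap\supp_R(\partial f_t)$ satisfies $v\in N^{k,t}_0(U)$ and $f_t\in S(v)$, whence $f_t\in\tilde N_1\bigl(N^{k,t}_\ast(U)\bigr)\subseteq N^{k,t+1}_1(U)$. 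This yields $W_1=\Gamma B_1$, and then closure under differentials together with \cref{cond: B0-surjective} gives $W_0\supseteq\bigcup_{e\in\Gamma B_1}\supp_R(\partial e)=\Gamma B_0$, so $W_0=\Gamma B_0$.

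The remaining degrees follow by a short induction: if $W_{i-1}=\Gamma B_{i-1}$ for some $2\le i\le k$, then for any $v\in\Gamma B_i$ \cref{cond: no-degenerate} gives $u\in\supp_R(\partial v)\subseteq\Gamma B_{i-1}=W_{i-1}$, so $u\in N^{k,j}_{i-1}(U)$ for some $j$, and since $i\le k$ and $v\in S(u)$ we get $v\in\tilde N_i\bigl(N^{k,j}_\ast(U)\bigr)\subseteq N^{k,j+1}_i(U)\subseteq W_i$, hence $W_i=\Gamma B_i$. I expect the main obstacle to be precisely the degree-$1$ step: a thickening only ever adds a cell when one of its faces is already present, so exhausting $\Gamma B_1$ forces one to alternate between pulling a shared vertex into degree $0$ via closure under differentials and pulling in the next edge via the neighbour operator $S(-)$, following a path in $\Graph(\Gamma B_1)$; \cref{cond: B1-connected} and \cref{cond: B0-surjective} are exactly what make such a chase both possible and exhaustive, and once degrees $0$ and $1$ are filled the induction runs with no further hypotheses.
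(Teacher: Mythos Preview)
Your proof is correct and follows essentially the same approach as the paper: reduce to showing $\bigcup_j N^{k,j}_i(U)=\Gamma B_i$ for each $i\le k$, obtain a first element in degree~$1$, walk along paths in $\Graph(\Gamma B_1)$ to exhaust $\Gamma B_1$, deduce degree~$0$ from \cref{cond: B0-surjective}, and induct upward using \cref{cond: no-degenerate}. The only cosmetic difference is that the paper obtains its starting element in degree~$1$ directly as an element of $P_1(U)=N^{k,0}_1(U)$ (nonempty by \cref{cond: U non-empty,cond: no-degenerate} since $m\ge1$), whereas you descend to $P_0(U)$ and then lift back via \cref{cond: B0-surjective}; your route is one step longer but equally valid, and your treatment of the monotonicity and the colimit-as-union is more explicit than the paper's.
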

\begin{proof}
    We inductively show that $\bigcup_j N_i^{k,j}(U) = \Gamma B_i$ for all $i \leq k$.
    The case $i = 0$ follows directly from \lccref{cond: B0-surjective}.
    Note that $N^{k,0}_1(U) = P_1(U)$. From \lccref{cond: U non-empty, cond: no-degenerate} it follows that $P_1(U) \neq \emptyset$.
    Let $u \in P_1(U)$ and $v \in \Gamma B_1$. By \lccref{cond: B1-connected}, there is some path $p_{u,v}$ in $\Graph(\Gamma B_1)$
    from \(u\) to \(v\). Then $v \in N^{k,l}_1(U)$, where \(l\) is the length of the path $p_{u,v}$.

    Now let $1 < i \leq k$ and assume $\bigcup_j N_{i-1}^{k,j}(U) = \Gamma B_{i-1}$. For any $u \in \Gamma B_i$
    \lccref{cond: no-degenerate} guarantees that $\supp_R(\partial u)$ is non-empty.
    Choose \(j\) large enough such that $\supp_R(\partial u) \cap N_{i-1}^{k,j}(U) \neq \emptyset$, so $u \in N_{i}^{k,j+1}(U)$.
    Hence, $\bigcup_j N_{i}^{k,j}(U) = \Gamma B_{i}$.
\end{proof}

\section{Finiteness of filling functions}\label{sec: finiteness}

In this section we prove finiteness for various filling functions introduced in \cref{sec: filling functions def}.

\subsection{Integral filling functions}
The following \lcnamecref{prop: finite fibers implies finite filling function} recovers the result that
$\filling{\bbZ,\Gamma}{n}$ takes finite values~\cite{fleming+martinez-pedroza}.

\begin{prop}%
    \label{prop: finite fibers implies finite filling function}
    Let $n \geq 2$ and $\Gamma$ be a group of type~$\FP_n(R)$.
    Assume that for each $l \geq 0$ the set $R_{\leq l} \defq \{ r \in R \mid |r| \leq l \}$ is finite.
    Then $\filling{R,\Gamma}{n}$ takes finite values.
\end{prop}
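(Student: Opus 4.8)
The goal is to show that for a group $\Gamma$ of type $\FP_n(R)$ with $n \geq 2$, if $R_{\leq l} = \{r \in R \mid \abs{r} \leq l\}$ is finite for every $l$, then $\filling{R,\Gamma}{n}$ takes only finite values. We work with the $(n-1)$-admissible resolution $L_\ast \to R$ from \cref{rem: cayley resolution}, which we may take to be finitely generated in degrees $\leq n$, with $L_0, L_1$ the cellular chain modules of a Cayley graph; in particular it satisfies the hypotheses \localref{cond: no-degenerate}, \localref{cond: B1-connected} and \localref{cond: B0-surjective} of \cref{thm: connectivity filtration} in degrees up to $k = n-1$ (after discarding any degenerate basis elements). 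Fix $l \geq 0$; we must bound $\fillvolume[R](c)$ uniformly over all cycles $c \in L_{n-1}$ with $\norm{c} \leq l$.

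**The key finiteness input.**

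First I would observe that there are, up to the $\Gamma$-action, only finitely many cycles $c \in L_{n-1}$ with $\norm{c} \leq l$. Indeed, by the remark following the definition of $\Graph$, any such $c$ decomposes into connected summands whose norms add up to at most $l$, so it suffices to handle connected cycles; a connected cycle $c$ with $\norm{c} \leq l$ has $\supp_R(c) \in \connSupp^{\,n-1}_{l'}$ for $l' = l$ if $R$ is, say, suitably separated — but in general we only know $\abs{\supp_R(c)} \leq l$ fails, so instead I would argue directly: by \cref{prop: finite orbits of finite connected subcomplexes} (applied more carefully, or via the local-finiteness \cref{lem: locally finite graph}) the number of $\Gamma$-orbits of connected subsets $U \subseteq \Gamma B_{n-1}$ that can arise as $\supp_R(c)$ for some $c$ with $\norm{c} \leq l$ is finite, and for each such $U$ the coefficients of $c$ lie in the finite set $R_{\leq l}$, so there are finitely many such $c$ up to translation. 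Since $\fillvolume[R]$ is $\Gamma$-invariant (a fact that should be recorded, following \cite{bader+kropholler+vankov25}), it is enough to show $\fillvolume[R](c) < \infty$ for a single fixed cycle $c$.

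**Producing a finite filling via the thickening.**

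Now fix one cycle $c \in L_{n-1}$ with $\partial c = 0$. Set $U \defq \supp_R(c)$, a connected (after splitting) element of $\connSupp^{\,n-1}_{l'}$ for appropriate $l'$, viewed as a basis collection concentrated in degree $n-1$. By \cref{thm: connectivity filtration}, $\colim_j N^{k,j} L_\ast(U)^{(k)} = L_\ast^{(k)}$ for $k = n-1$; but what we actually need is one more degree. Since $\Gamma$ is of type $\FP_n(R)$, the resolution is exact in degree $n-1$, so the class $[c] \in H_{n-1}(L_\ast) = 0$ vanishes, i.e.\ $c = \partial b$ for some $b \in L_n$. The point is to find such a $b$ living in a \emph{finite type} subcomplex. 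Applying the thickening one further step into degree $k = n$, \cref{prop: step thickening finite type} tells us $N^{n,j}_\ast(U)$ is at most $n$-dimensional and of finite type for every $j$, and $\colim_j N^{n,j} L_\ast(U)^{(n)} = L_\ast^{(n)}$ by \cref{thm: connectivity filtration} applied with $k=n$. Because $c$ is a cycle in the colimit of exact-in-degree-$(n-1)$ complexes — more precisely, since $L_\ast$ is exact at degree $n-1$ and $c \in L_{n-1}(N^{n,j}_\ast(U))$ with $\partial c = 0$ — for $j$ large enough we can find $b \in L_n(N^{n,j}_\ast(U))$ with $\partial b = c$: here one uses that $c$ bounds in the full complex, that bounding element has finite support, and that support is eventually absorbed into the finite-type thickening. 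Then $\norm{b} < \infty$ since $b$ is a finite $R$-linear combination of basis elements, giving $\fillvolume[R](c) \leq \norm{b} < \infty$.

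**The main obstacle.**

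The delicate point is the last step: knowing that $c$ bounds \emph{somewhere} in $L_n$ does not immediately place a bounding chain inside a prescribed finite-type subcomplex, and the thickening $N^{n,j}$ is built from $U = \supp_R(c)$ rather than from the (a priori unknown) support of a filling $b$. The resolution is that any fixed filling $b_0$ with $\partial b_0 = c$ has finite support, hence $\supp_R(b_0) \subseteq N^{n,j}_\ast(U)_n$ for $j$ large by the colimit statement of \cref{thm: connectivity filtration} — so $b_0$ itself already lies in the finite-type subcomplex for large $j$, and there is nothing further to prove. Thus the genuine work is (i) checking that the resolution from \cref{rem: cayley resolution} can be arranged to satisfy conditions \localref{cond: no-degenerate}--\localref{cond: B0-surjective} without losing exactness or finite generation (removing degenerate generators and, if necessary, re-choosing $L_1$), and (ii) the bookkeeping that $\fillvolume[R]$ is $\Gamma$-invariant and that only finitely many orbits of bounded cycles occur — both of which are routine given the machinery already developed, the first essentially by \cref{rem: cayley resolution} and the second by combining \cref{lem: locally finite graph}, \cref{prop: finite orbits of finite connected subcomplexes}, and the finiteness of $R_{\leq l}$.
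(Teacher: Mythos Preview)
Your approach is essentially the paper's, but with one small gap left unfilled and one large detour that is not needed. The gap: you correctly flag that $\norm{c}\le l$ does not a priori bound $\size{\supp_R(c)}$, but ``applied more carefully'' is not an argument. The paper's fix is one line: the hypothesis with $l=1$ says $R_{\le 1}$ is finite, so $R$ is $\epsilon$\=/separated for $\epsilon=\min\{\abs{r}:0\neq r\in R_{\le 1}\}>0$; rescaling via \cref{lem: 0-separated norm bounded by 1} gives $\size{\supp_R(c)}\le\norm{c}\le l$, and then \cref{prop: finite orbits of finite connected subcomplexes} applies directly.

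The detour: your entire ``producing a finite filling via the thickening'' section is superfluous, and you effectively discover this yourself in the last paragraph. Once you know there are only finitely many $\Gamma$\=/orbits of connected cycles of norm $\le l$, you simply pick one filling for each representative (which exists because $H_{n-1}(L_\ast)=0$) and set $D(l)$ to be the maximum of their norms; $\Gamma$\=/invariance of $\fillvolume[R]$ finishes it. No colimit, no \cref{thm: connectivity filtration}, no choice of resolution satisfying \localref{cond: no-degenerate}--\localref{cond: B0-surjective} is required here---that machinery is reserved for the harder \cref{thm: dfilling finite}, where one cannot enumerate cycles up to translation because the coefficient ring is arbitrary.
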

\begin{proof}
    We consider the collection $\calD^{n-1}_l \defq \{ c \in L_{n-1} \mid c \text{ connected, } \norm{c} \leq l\}$ of connected
    $(n-1)$\=/cycles with norm at most \(l\). The group $\Gamma$ acts on $\calD^{n-1}_l$ by left multiplication,
    as $\norm{\gamma c} = \norm{c} \leq l$ for each $c \in \calD^{n-1}_l$ and $\gamma \in \Gamma$.
    By assumption, $R_{\leq 1}$ is finite, hence \(R\) is $\epsilon$\=/separate for some $\epsilon > 0$.
    Using \cref{lem: 0-separated norm bounded by 1} we may assume that $\epsilon = 1$,
    hence $\size{\supp_R(c)} \leq \norm{c} \leq l$.
    Let $U_1, \ldots, U_N$ be representatives of $\lquot{\connSupp^{n-1}_l}{\Gamma}$.
    If $c \in \calD^{n-1}_l$ is a cycle with $\supp_R(c) = U_j$, it is contained in the set
    \[
        R_{\leq l}U_j \defq \Bigl\{ \sum_{u \in U_j} \alpha_u u \mid \alpha_u \in R_{\leq l}\Bigr\}\mpct{,}
    \]
    which is finite as $R_{\leq l}$ and $U_j$ are finite. We conclude that $\lquot{\calD^{n-1}_l}{\Gamma}$ is finite.
    This implies that there is some $D(l) \geq 0$ such that $\fillvolume[R](c) \leq D(l)$ for every $c \in \calD^{n-1}_l$.
    In the general case a cycle \(c\) decomposes as a sum $c = c_1 + \cdots + c_s$ of connected cycles.
    Then $\norm{c} = \sum_i \norm{c_i}$ and each $c_i$ has a filling $b_i$ satisfying
    $\norm{b_i} \leq D(\norm{c_i})$. Therefore, $\sum_i b_i$ is a filling for \(c\) and we conclude
    \begin{equation*}
        \filling{R,\Gamma}{n}(l) \leq \max\Bigl\{ \sum_{i=1}^m D(l_i) \mid \sum_{i=1}^m l_i = l, l_i \in \bbN\Bigr\} < \infty\mpct{.}\qedhere
    \end{equation*}
\end{proof}

\begin{cor}%
    \label{cor: integral finite}
    $\filling{\bbZ,\Gamma}{n}$ takes finite values.
\end{cor}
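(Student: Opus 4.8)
The plan is to derive this immediately from \cref{prop: finite fibers implies finite filling function}. I would take \(R = \bbZ\) equipped with the absolute value norm. The sole hypothesis of that proposition requiring verification is that for each \(l \geq 0\) the set \(\bbZ_{\leq l} = \{ r \in \bbZ \mid \abs{r} \leq l \}\) is finite; but this set equals \(\{ -\lfloor l \rfloor, \dots, \lfloor l \rfloor \}\) and is therefore finite. Since \(\Gamma\) is of type \(\FP_n(\bbZ)\) with \(n \geq 2\) by the standing hypotheses, \cref{prop: finite fibers implies finite filling function} applies and shows that \(\filling{\bbZ,\Gamma}{n}\) takes finite values.

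There is no genuine obstacle here, since all of the work has already been carried out in the proof of \cref{prop: finite fibers implies finite filling function}; the corollary merely records that the usual norm on \(\bbZ\) has finite balls. Equivalently, one observes that this norm is \(1\)\=/separated and satisfies the finiteness condition on \(\bbZ_{\leq l}\) — precisely the two features exploited in that proof, through the reduction to connected cycles, the finiteness of \(\lquot{\connSupp^{n-1}_l}{\Gamma}\) coming from \cref{prop: finite orbits of finite connected subcomplexes}, and the finiteness of each set \(\bbZ_{\leq l} U_j\).
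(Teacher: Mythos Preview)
Your proposal is correct and matches the paper's approach: the corollary is stated without proof in the paper precisely because it is an immediate specialisation of \cref{prop: finite fibers implies finite filling function} to \(R = \bbZ\) with the absolute value norm, where each \(\bbZ_{\leq l}\) is finite.
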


\subsection{Discrete filling functions} To show that $\dfilling{R,\Gamma}{n}(l)$ is finite we
follow a similar approach as~\cite{bader+kropholler+vankov25}*{Proposition 2.21}.
There are only finitely many orbits for $\supp_R(c)$ of a connected cycle \(c\) with $\dnorm{c} \leq l$.
Therefore, we can use the thickening procedure from the previous section to obtain a subcomplex of finite \(R\)\=/dimension
containing a filling for all connected cycles. An upper bound on $\dnorm{b}$ for a filling \(b\) of \(c\) is given by
the dimension of this subcomplex in degree \(n\).
To apply \cref{thm: connectivity filtration} we first verify that there
exists a suitable resolution satisfying the assumptions of the \lcnamecref{thm: connectivity filtration}.

\begin{prop}%
    \label{prop: existence suitable resolution}
    Let $n \geq 1$ and $\Gamma$ be of type~$\FP_n(R)$. Then there exists a free $R\Gamma$\=/resolution $L_\ast \to R$ such that
    $L_i$ is finitely generated for $i \leq n$ and a choice of bases $B_i$ of $L_i$ such that
    \lccref{cond: no-degenerate, cond: B1-connected,cond: B0-surjective} from \cref{thm: connectivity filtration} are satisfied.
\end{prop}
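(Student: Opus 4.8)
The plan is to build the resolution in three stages, starting from the Cayley-graph resolution of \cref{rem: cayley resolution} and then modifying it so that the remaining two conditions, \lccref{cond: no-degenerate} (no basis element has zero boundary) and \lccref{cond: B1-connected} (the degree-one incidence graph is connected), are met, while preserving finite generation in degrees $\leq n$ and leaving \lccref{cond: B0-surjective} intact.

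First I would take $L_\ast$ as in \cref{rem: cayley resolution}, so that $L_0 = R\Gamma$ with basis $B_0 = \{b_0\}$, $L_1 = R\Gamma^{|S|}$ with basis $\{e_s\}_{s\in S}$ and $\partial e_s = (s-1)b_0$, for a finite symmetric generating set $S$ of $\Gamma$ not containing the identity. Here \lccref{cond: B0-surjective} is automatic: every group element $\gamma$ is a product of generators, so writing a telescoping sum shows $\gamma b_0 \in \bigcup_s \supp_R(\partial e_s)\cdot\Gamma$, i.e.\ $\bigcup_{u\in\Gamma B_1}\supp_R(\partial u) = \Gamma B_0$. Also $\partial e_s = (s-1)b_0 \neq 0$ since $s\neq e$, so the degree-one part of \lccref{cond: no-degenerate} holds. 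For \lccref{cond: B1-connected}: two basis elements $e_s, e_t$ (at $\gamma$ and $\gamma'$ respectively) are joined by an edge in $\Graph(\Gamma B_1)$ exactly when $\{\gamma s b_0, \gamma b_0\}\cap\{\gamma' t b_0, \gamma' b_0\}\neq\emptyset$; tracking which group elements appear, one checks that the $\Gamma$-orbit of any single edge, together with connectivity of the Cayley graph $\Cay(\Gamma,S)$, forces $\Graph(\Gamma B_1)$ to be connected — essentially because a path in the Cayley graph from $e$ to $\gamma$ lifts to a path of incident edges in $\Graph(\Gamma B_1)$.

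The remaining issue is degenerate basis elements in degrees $2 \le i \le n$: a priori some $b\in B_i$ could have $\partial b = 0$. I would remove these as follows. If $b\in B_i$ with $\partial_i b = 0$, then $b$ generates a free $R\Gamma$-summand of $\ker\partial_i = \im\partial_{i+1}$, which is a cycle; since $L_\ast$ is a resolution this summand is a boundary, and by a standard diagonalization / basis-change argument (replacing a basis element $b'$ of $L_{i+1}$ with $\partial b' = b + (\text{rest})$, or if no such $b'$ is finitely available, first enlarging $L_{i+1}$ and $L_{i+2}$ by a free pair $R\Gamma \xrightarrow{\id} R\Gamma$ and using it to cancel) one can eliminate $b$ from the basis at the cost of only finitely many modifications, since $B_i$ is finite for $i\le n$. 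Concretely: split off the contractible subcomplex $\cdots \to 0 \to R\Gamma b \xrightarrow{\id} R\Gamma \hat{b} \to 0 \to \cdots$ where $\hat b$ is a new generator in degree $i+1$; this is an elementary expansion, so the new complex is still a free resolution, is still finitely generated in degrees $\le n$ (we only added finitely many generators, as $B_i$ is finite), and now $b$ pairs nontrivially under $\partial_{i+1}$. Doing this for every degenerate basis element in degrees $2,\dots,n$ — and noting the expansions in degree $i+1 \le n+1$ do not reintroduce degeneracies in degrees $\le n$, or handling the top degree $n$ separately since \lccref{cond: no-degenerate} is only required up to $i = k \le n$ — yields a resolution with the desired bases. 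The expansions in degree $i=1$ are unnecessary since we already arranged $\partial e_s\neq 0$, and they do not affect $B_0, B_1$, so \lccref{cond: B1-connected} and \lccref{cond: B0-surjective} are preserved.

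The main obstacle I anticipate is the bookkeeping in the last step: making the elimination of degenerate generators genuinely finite and verifying it does not cascade, i.e.\ that adding a free pair in degrees $(i,i+1)$ to kill one degenerate $b$ does not create new degenerate generators in lower degrees (it does not, since the new differential $R\Gamma\hat b \xrightarrow{\id} R\Gamma b$ is an isomorphism onto a summand and the old differentials are unchanged) and that finitely many such moves suffice (they do, because $\bigcup_{i=2}^n B_i$ is finite). Everything else — connectivity of $\Graph(\Gamma B_1)$ and surjectivity onto $\Gamma B_0$ — is a direct translation of connectivity of the Cayley graph and should go through cleanly.
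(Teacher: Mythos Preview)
Your treatment of \lccref{cond: B1-connected} and \lccref{cond: B0-surjective} is fine and matches the paper (the paper phrases \lccref{cond: B1-connected} as: \(\Graph(\Gamma B_1)\) is the line graph of the Cayley graph, hence connected).

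Your handling of \lccref{cond: no-degenerate}, however, has a genuine gap. In your ``Concretely'' step you add a generator \(\hat b\) in degree \(i+1\) with \(\partial\hat b = b\) and declare that ``\(b\) pairs nontrivially under \(\partial_{i+1}\)''. But that is not what \lccref{cond: no-degenerate} asks: the condition is \(\partial_i b \neq 0\), and this is unchanged by adding \(\hat b\) above. Moreover, the expanded complex is no longer exact in degree \(i+1\): since \(b\) was already a boundary, say \(b=\partial c\) with \(c\in L_{i+1}\), the element \(\hat b - c\) is a new cycle in \(L_{i+1}\oplus R\Gamma\hat b\) not hit by \(\partial_{i+2}\). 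If instead you mean to then \emph{quotient out} the contractible pair \(R\Gamma\hat b \to R\Gamma b\), you must verify that the quotient of \(L_{i+1}\) stays free with a controlled basis and that the modified differential (the old \(\partial_{i+1}\) followed by projection away from \(R\Gamma b\)) does not create \emph{new} degenerate basis elements in degree \(i+1\) --- precisely the cascading you assert cannot happen, but for which no argument is given.

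The paper sidesteps all of this by building the resolution from scratch rather than retrofitting. After fixing degrees \(0\) and \(1\) from the Cayley graph, one extends inductively by choosing each \(L_i\) to be a free module of \emph{minimal} rank surjecting onto \(\ker\partial_{i-1}\). If some basis element had \(\partial b=0\), deleting it would yield a smaller-rank surjection, contradicting minimality; and Schanuel's Lemma ensures the minimal rank is finite for \(i\le n\). (A simpler retrofit than yours also works: whenever \(\partial_i\neq 0\), pick \(b''\in B_i\) with \(\partial b''\neq 0\) and replace each degenerate \(b\) by \(b+b''\). This is an elementary basis change in \(L_i\) alone, so nothing cascades.)
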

\begin{proof}
    Let \(S\) be a finite generating set for $\Gamma$. As in \cref{rem: cayley resolution}, we start off our resolution with the cellular chain complex
    $L_i \defq C_i(\Cay(\Gamma, S); R)$ for $i = 0, 1$. Then $\Graph(\Gamma B_1)$ is the line graph of a Cayley graph of $\Gamma$%
    \footnote{The line graph
        of a graph \(G\) is the graph $L(G)$ with the edges of \(G\) as vertices and edges given by the intersection of edges in \(G\).}.
    In particular $L(G)$ is connected if \(G\) is connected.
    Therefore, $L_\ast$ satisfies \lccref{cond: B1-connected,cond: B0-surjective}.

    For \lccref{cond: no-degenerate} note that we can extend our partial resolution to a full resolution by iteratively choosing
    surjections $\partial_i \colon L_i \onto \ker(L_i \to L_{i-1})$ with $L_i$ of minimal $R\Gamma$\=/rank. If a basis element of
    $L_i$ were to be mapped to zero under $\partial_i$, this would violate minimality.
    The fact that we can choose $L_i$ to be finitely generated for $i \leq n$ follows from Schanuel's Lemma~\cite{brown}*{VIII 4.3}.
\end{proof}

\begin{thm}[store=finitenessDFV]%
    \label{thm: dfilling finite}
    Let $n \geq 2$ and $\Gamma$ be a group of type~$\FP_n(R)$. Then $\dfilling{R,\Gamma}{n}$ takes finite values.
\end{thm}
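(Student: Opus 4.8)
The plan is to turn the geometric argument of \cite{bader+kropholler+vankov25} into an application of the thickening machinery of \cref{sec: cellular arguments}. First I would fix, via \cref{prop: existence suitable resolution}, a free \(R\Gamma\)-resolution \(L_\ast \to R\) with \(L_i\) finitely generated for \(i \le n\) and free bases \(B_i\) such that \lccref{cond: no-degenerate,cond: B1-connected,cond: B0-surjective} of \cref{thm: connectivity filtration} hold. This resolution is \((n-1)\)-admissible, so by \cref{lem: filling functions well-defined} it computes \(\dfilling{R,\Gamma}{n} = \altfilling{R,\Gamma}{n-1}\) up to \(\approx\), and it suffices to bound \(\fillvolume[R](c)\) over cycles \(c \in L_{n-1}\) with \(\dnorm{c} \le l\), for each fixed \(l\). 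Since \(R\) carries the discrete norm it is \(1\)-separated, so \(\dnorm{c} = \size{\supp_R(c)}\) and \(\fillvolume[R](c) = \inf\{\size{\supp_R(b)} \mid \partial b = c\}\) throughout.

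I would then reduce to connected cycles: decomposing \(\supp_R(c)\) into the connected components of \(\Graph(c)\) writes a cycle \(c\) with \(\dnorm{c}\le l\) as \(c = c_1 + \dots + c_s\) with each \(c_i\) a connected cycle, \(s \le l\), and \(\sum_i \dnorm{c_i} = \dnorm{c}\); a sum of fillings of the \(c_i\) is a filling of \(c\), so it is enough to bound \(\fillvolume[R]\) on connected cycles. For a connected cycle \(c\) with \(\dnorm{c}\le l\) we have \(\supp_R(c) \in \connSupp^{n-1}_l\), and by \cref{prop: finite orbits of finite connected subcomplexes} there are only finitely many \(\Gamma\)-orbits; as \(\fillvolume[R]\) is \(\Gamma\)-invariant I may assume \(\supp_R(c) = U_j\) for one of finitely many representatives \(U_1, \dots, U_N\).

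Now the main point. Since \(n\ge 2\), the resolution \(L_\ast\) is exact in degree \(n-1\), so every cycle supported on \(U_j\) is a boundary in \(L_\ast\); applying \cref{thm: connectivity filtration} with \(k=n\), \(m=n-1\), \(U=U_j\) (legitimate, as \(1\le n-1\le n\) and conditions \lccref{cond: U non-empty,cond: no-degenerate,cond: B1-connected,cond: B0-surjective} hold) gives \(\colim_t N^{n,t}L_\ast(U_j)^{(n)} = L_\ast^{(n)}\). The claim I would establish is that there is a single integer \(t_j\) such that every cycle supported on \(U_j\) already bounds inside \(N^{n,t_j}L_\ast(U_j)\). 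Granting this, \cref{prop: step thickening finite type} shows \(N^{n,t_j}_n(U_j)\) is finite, so \(\fillvolume[R](c)\le \size{N^{n,t_j}_n(U_j)} =: D_j\) whenever \(\supp_R(c)\subseteq U_j\); setting \(D(l):=\max_j D_j\) and combining with the decomposition above yields \(\fillvolume[R](c)\le \sum_i \fillvolume[R](c_i) \le l\cdot D(l)\) for every cycle \(c\) with \(\dnorm{c}\le l\), hence \(\dfilling{R,\Gamma}{n}(l)\le l\cdot D(l)<\infty\).

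The step I expect to be the real obstacle is this uniformity claim — that \(t_j\) can be chosen independently of the individual cycle supported on \(U_j\), not merely of \(c\) up to translation. Homology commutes with the filtered colimit above and \(H_{n-1}(L_\ast^{(n)})=0\) by exactness (this is where \(n\ge 2\) enters), so the class of any such cycle dies in \(\colim_t H_{n-1}(N^{n,t}L_\ast(U_j)^{(n)})=0\); what needs care is that this already happens at one finite stage. I would reduce this to controlling the \(R\)-module of cycles \(\ker(\partial_{n-1})\cap\langle U_j\rangle_R\) — showing it is finitely generated over \(R\), or at least that its elements admit fillings with uniformly bounded support — and to handle that I would use the finiteness of \(B_n\) together with \cref{lem: finite neighbouring cells in above degree} to confine the boundary cells relevant to filling a cycle supported on \(U_j\) to a fixed finite subset of \(\Gamma B_n\), mirroring the cellular bookkeeping in \cite{bader+kropholler+vankov25}.
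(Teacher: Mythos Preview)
Your approach is essentially the paper's: same resolution via \cref{prop: existence suitable resolution}, same reduction to connected cycles and to finitely many orbit representatives \(U_1,\dots,U_m\) via \cref{prop: finite orbits of finite connected subcomplexes}, and the same appeal to \cref{thm: connectivity filtration} together with \cref{prop: step thickening finite type} for the final bound.

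The step you flag as the real obstacle is dispatched in one line in the paper. Since \(N^{n,0}L_\ast(U_j)=L_\ast(P_\ast(U_j))\) has no \(n\)-chains, \(H_{n-1}(N^{n,0}L_\ast(U_j))\) is exactly your cycle module \(\ker(\partial_{n-1})\cap\langle U_j\rangle_R\); the paper simply asserts this is a finitely generated \(R\)-module, and since homology commutes with the filtered colimit of \cref{thm: connectivity filtration} (whose \((n{-}1)\)st homology vanishes), the finitely many generators all die at a common finite stage \(a_j\), which then works for every cycle supported on \(U_j\). So your first option --- finite generation of the cycle module --- is precisely the route the paper takes; your backup plan via \cref{lem: finite neighbouring cells in above degree} is not used and is not needed for the argument as written.
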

\begin{proof}
    We take $L_\ast$ to be the free resolution from \cref{prop: existence suitable resolution}.
    Let $l > 0$ and let $U_1,\ldots,U_m$ be a choice of representatives of the $\Gamma$\=/orbits in $\connSupp^{n-1}_l$.
    Because $H_{n-1}(L_\ast) = 0$, the inclusion $N^0L_\ast(U_i) \to L_\ast$ induces the trivial map in $(n-1)$th homology.
    Since homology commutes with direct limits and $H_{n-1}(N^0L_\ast(U_i))$ is a finitely generated \(R\)\=/module,
    it follows from \cref{thm: connectivity filtration} that there is an $a_i$ such that the inclusion
    $\iota_{a_i} \colon N^0L_\ast(U_i) \to N^{n,a_i}L_\ast(U_i)$ induces the trivial map
    \[H_{n-1}(\iota_{a_i}) \colon H_{n-1}(N^{n,0}L_\ast(U_i)) \to H_{n-1}(N^{n,a_i}L_\ast(U_i))\mpct{.}\]
    Define $M^i_\ast$ to be $N^{n,a_i}L_\ast(U_i)$.

    Let \(z\) be a connected $(n-1)$\=/cycle in $L_{n-1}$ with $\dnorm{z} \leq l$.
    Then there exists $1 \leq i \leq m$ and $\gamma \in \Gamma$ such that
    $\supp_R(z) = \gamma U_i$. As $\gamma^{-1}z$ and \(z\) have the same filling volume,
    we may assume that $\gamma = 1$.
    As $H_{n-1}(\iota_{a_i}) = 0$, there exists a filling $b \in M^i_n$. The norm of \(b\) is bounded by
    $\size{N^{n,a_i}_n(U_i)} \qdef D_i(l)$. By \cref{prop: step thickening finite type}, $D_i(l) < \infty$.
    Therefore, any cycle with connected support satisfying $\dnorm{z} \leq l$ has a filling bounded by
    $D(l) = \max\{D_i(l) \mid 1 \leq i \leq m\} < \infty$.

    In the non-connected case one argues as in the proof of \cref{prop: finite fibers implies finite filling function}.
\end{proof}

\subsection{Weighted filling functions}
In the following let $\Gamma$ be a group of type $\FP_n(R)$.
We fix a free resolution $L_\ast \to R$ and finite $R[\Gamma]$\=/bases of $L_i$ for $i \leq n$.

\begin{lem}%
    \label{lem: bounded filling}
    Let $2 \leq i \leq n$.
    Assume that $R\noic$ is \(1\)\=/separated and $\weightedfilling{R,\Gamma}{i}(l) < \infty$ for all \(l\).
    For any $(i-1)$\=/cycle $c \in L_{i-1}$ there exists a reduced filling $y\noic$ such that
    \[\norm{y}^\Gamma \leq \norm{c}\norm{c}^\Gamma \! A_\Gamma^{i}(\norm{c}) \quad \text{ and } \quad \norm{y} \leq \norm{c}A_\Gamma^{i}(\norm{c})\mpct{,}\]
    where $K_{i-1}$ is the constant from \cref{lem: norm estimate connected} and
    \[A_\Gamma^i(x) = \weightedfilling{R,\Gamma}{i}(x + 2K_{i-1} x^2) + 1\mpct{.}\]
\end{lem}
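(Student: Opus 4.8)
The plan is to reduce to the case of a connected cycle, then to fill a connected cycle by translating its support onto the identity coset — so that the hypothesis \(\weightedfilling{R,\Gamma}{i}<\infty\) can be brought to bear, the weighted norm of a chain blowing up the further its support lies from \(e\) — to translate the resulting filling back, and finally to reassemble. The two extra factors \(\norm{c}\) and \(\norm{c}^\Gamma\) in the stated bound will record, respectively, the summation over connected components and the inflation of the weighted norm incurred when a filling is translated back by an element \(\gamma\) with \(1+\ell_\Gamma(\gamma)\leq\norm{c}^\Gamma\).

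First I would record the reduction. Decomposing \(c\) into its connected summands gives \(c=\sum_j c_j\) with each \(c_j\) a connected cycle, \(\norm{c}=\sum_j\norm{c_j}\), and \(\norm{c}^\Gamma=\sum_j\norm{c_j}^\Gamma\); since \(R\) is \(1\)-separated we have \(\norm{c_j}\geq 1\) for \(c_j\neq 0\), and \(\weightedfilling{R,\Gamma}{i}\), hence \(A_\Gamma^i\), is non-decreasing. If for each connected nonzero cycle we can produce a filling \(y_j\in L_i\) with \(\norm{y_j}^\Gamma\leq\norm{c_j}^\Gamma A_\Gamma^{i}(\norm{c_j})\) and \(\norm{y_j}\leq A_\Gamma^{i}(\norm{c_j})\), then \(y\defq\sum_j y_j\) fills \(c\), and summing — using \(\norm{c_j}\leq\norm{c}\), monotonicity of \(A_\Gamma^i\), and \(1\leq\norm{c_j}\) — gives \(\norm{y}^\Gamma\leq A_\Gamma^{i}(\norm{c})\sum_j\norm{c_j}^\Gamma=A_\Gamma^{i}(\norm{c})\norm{c}^\Gamma\leq\norm{c}\norm{c}^\Gamma A_\Gamma^{i}(\norm{c})\) and \(\norm{y}\leq A_\Gamma^{i}(\norm{c})\sum_j\norm{c_j}=\norm{c}A_\Gamma^{i}(\norm{c})\). (The case \(c=0\) is settled by \(y=0\).) This is exactly where the product \(\norm{c}\norm{c}^\Gamma\) enters rather than \((\norm{c}^\Gamma)^2\): one only has \(\sum_j a_jb_j\leq(\sum_j a_j)(\sum_j b_j)\) for non-negative reals, applied with \(a_j=\norm{c_j}\), \(b_j=\norm{c_j}^\Gamma\). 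So it suffices to treat a connected nonzero cycle \(c\).

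For such a \(c\) I would fix \(x=\gamma b\in\supp_R(c)\) with \(b\in B_{i-1}\) and pass to \(c'\defq\gamma^{-1}c\), again a connected cycle, with \(b\in\supp_R(c')\), \(\ell_\Gamma(b)=0\), and \(\norm{c'}=\norm{c}\). Since \(R\) is \(1\)-separated, \(\size{\supp_R(c')}\leq\norm{c}\), so \(\Graph(c')\) has diameter at most \(\norm{c}\); iterating \cref{lem: norm estimate connected} along a path in \(\Graph(c')\) from \(b\) shows every \(u\in\supp_R(c')\) satisfies \(\ell_\Gamma(u)\leq 2K_{i-1}\norm{c}\), whence summing over \(\supp_R(c')\) yields \(\norm{c'}^\Gamma\leq(1+2K_{i-1}\norm{c})\norm{c'}=\norm{c}+2K_{i-1}\norm{c}^2\) (this is also \cref{cor: estimate weighted to non-weighted}). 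By the finiteness hypothesis and \cref{def: filling function} there is then a filling \(y'\in L_i\) of \(c'\) with \(\norm{y'}^\Gamma\leq\weightedfillvolume[R](c')+1\leq\weightedfilling{R,\Gamma}{i}(\norm{c}+2K_{i-1}\norm{c}^2)+1=A_\Gamma^{i}(\norm{c})\), the summand \(1\) absorbing the possibility that the infimum defining \(\weightedfillvolume[R]\) is not attained. Put \(y\defq\gamma y'\), so \(\partial y=\gamma\partial y'=\gamma c'=c\); the unweighted \(\ell^1\)-norm is \(\Gamma\)-invariant, so \(\norm{y}=\norm{y'}\leq\norm{y'}^\Gamma\leq A_\Gamma^{i}(\norm{c})\). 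For the weighted norm, writing \(y'=\sum_v d_v v\) with \(v=\delta_v b_v\in\Gamma B_i\) and using \(1+\ell_\Gamma(\gamma\delta_v)\leq(1+\ell_\Gamma(\gamma))(1+\ell_\Gamma(\delta_v))\), one checks \(\norm{y}^\Gamma\leq(1+\ell_\Gamma(\gamma))\norm{y'}^\Gamma\); and since the coefficient of \(x\) in \(c\) is nonzero, of absolute value \(\geq 1\), we get \(\norm{c}^\Gamma\geq 1+\ell_\Gamma(\gamma)\), hence \(\norm{y}^\Gamma\leq\norm{c}^\Gamma A_\Gamma^{i}(\norm{c})\). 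Together with the reduction this finishes the proof.

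I expect no serious obstacle here: once the translation trick is set up, everything is careful norm bookkeeping. The single step deserving attention is the ``translate back'' estimate \(\norm{\gamma y'}^\Gamma\leq(1+\ell_\Gamma(\gamma))\norm{y'}^\Gamma\) combined with the bound \(1+\ell_\Gamma(\gamma)\leq\norm{c}^\Gamma\) on how far out the chosen basepoint of \(\supp_R(c)\) can lie — this is the only place the non-invariance of \(\norm{\placeholder}^\Gamma\) under the \(\Gamma\)-action genuinely matters, and it is the source of the factor \(\norm{c}^\Gamma\). One must also keep track of the two uses of \(1\)-separatedness (bounding \(\size{\supp_R(\placeholder)}\) by \(\norm{\placeholder}\), and bounding coefficients below by \(1\)), and note that the ``\(+1\)'' built into \(A_\Gamma^i\) is exactly what is needed because \(\weightedfillvolume[R]\) is an infimum that may not be attained.
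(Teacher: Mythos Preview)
Your proof is correct and follows essentially the same approach as the paper: decompose into connected components, translate each so that a support element sits at the identity, bound the translated weighted norm via \cref{cor: estimate weighted to non-weighted}, fill using the finiteness of \(\weightedfilling{R,\Gamma}{i}\), translate back using \(1+\ell_\Gamma(\gamma)\leq\norm{c_j}^\Gamma\), and sum. The only cosmetic difference is that you first establish the sharper per-component bounds \(\norm{y_j}^\Gamma\leq\norm{c_j}^\Gamma A_\Gamma^{i}(\norm{c_j})\) and then sum (so your intermediate bound \(\norm{y}^\Gamma\leq\norm{c}^\Gamma A_\Gamma^{i}(\norm{c})\) is actually tighter before you inflate by \(\norm{c}\geq 1\)), whereas the paper bounds each summand by \(\norm{c}^\Gamma A_\Gamma^{i}(\norm{c})\) immediately and uses that the number of components is at most \(\norm{c}\).
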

\begin{proof}
    Because \(R\) is \(1\)\=/separated, \(c\) admits a decomposition $c = c_1 + \cdots + c_m$ into connected cycles with $m \leq \norm{c}$.
    For $h_j \in \supp_\Gamma(c_j)$ consider the cycles $c'_j \defq h_j^{-1}c_j$.
    Choose minimal fillings $y'_j$ of $c'_j$ such that
    \begin{equation*}
        \norm{y'_j}^\Gamma \leq \weightedfilling{R,\Gamma}{i}\bigl(\norm{c'_j}^\Gamma\bigr) + 1
    \end{equation*}
    and let $y_j \defq h_j y'_j$. The $+1$\=/term is needed as we cannot assume that the minimum in the definition of $\fillvolume[R]$ is attained.
    Then $\partial y_j = h_j \partial y'_j = h_j h_j^{-1}c_j = c_j$, hence
    $y \defq y_1 + \cdots + y_m$ is a filling of \(c\). We may assume that \(y\) is reduced, as otherwise we can remove the non-reduced part to obtain
    a smaller filling.
    Because $h_j \in \supp_\Gamma(c_j)$, there is some $v \in \supp_R(c_j)$ such that
    $v = h_j b_j$ for some $R\Gamma$\=/basis element $b_j$. Then $b_j \in \supp_R(c_j')$, hence by \cref{cor: estimate weighted to non-weighted}
    \begin{equation*}
        \norm{c'_j}^\Gamma
        \leq (\norm{b_j}^\Gamma + 2K_{i-1} \norm{c_j})\norm{c_j} = (1 + 2K_{i-1} \norm{c_j})\norm{c_j}
        \leq (1 + 2K_{i-1} \norm{c})\norm{c}.
    \end{equation*}
    It follows that
    \begin{align*}
        \norm{y'_j}^\Gamma
        \leq \weightedfilling{R,\Gamma}{i}\bigl(\norm{c'_j}^\Gamma\bigr) + 1
        \leq \weightedfilling{R,\Gamma}{i}\bigl((1 + 2K_{i-1} \norm{c})\norm{c}\bigr) + 1 = A_\Gamma^{i}(\norm{c}).
    \end{align*}
    Because \(R\) is \(1\)\=/separated, we have $1 + \ell_\Gamma(h_j) \leq \norm{c_j}^\Gamma \leq \norm{c}^\Gamma$. Hence,
    \begin{align*}
        \norm{y_j}^\Gamma
         &\leq (1 + \ell_\Gamma(h_j))\norm{y'_j}^\Gamma \leq \norm{c}^\Gamma A_\Gamma^{i}(\norm{c}).
    \end{align*}
    In total, we obtain that
    \begin{gather*}
        \norm{y}^\Gamma \leq \sum_{j=1}^m \norm{y_j}^\Gamma
        \leq \norm{c}\norm{c}^\Gamma A_\Gamma^{i}(\norm{c})\\
        \shortintertext{and}
        \norm{y} \leq \sum_{j=1}^m \norm{y_j} = \sum_{j=1}^m \norm{y'_j} \leq \sum_{j=1}^m \norm{y'_j}^\Gamma \leq \norm{c} A_\Gamma^i(\norm{c}).\qedhere
    \end{gather*}
\end{proof}

We now give a criterion for the finiteness of weighted filling functions in terms of the finiteness of
the non-weighted functions. The proof is based on the constructions in the proofs of~\cite{ji+ramsey}*{Corollary~2.5} and~\cite{bader+sauer}*{Lemma~6.7}.

\begin{prop}%
    \label{prop: polynomial equivalence}
    Assume that $R\noic$ is \(1\)\=/separated. For $2 \leq i \leq n$ there are constants $A_i, B_i > 0$ such that
    \begin{refenum}
        \item $\filling{R,\Gamma}{i}(x) \leq x \cdot \weightedfilling{R,\Gamma}{i}(x + A_i \cdot x^2) + 1$,
        \item $\weightedfilling{R,\Gamma}{i}(x) \leq B_i \cdot \bigl(\filling{R,\Gamma}{i}(x) + 1\bigr)^2 + x \cdot \bigl(\filling{R,\Gamma}{i}(x) + 1\bigr)$.
    \end{refenum}
    In particular $\filling{R,\Gamma}{i}$ is finite if and only if $\weightedfilling{R,\Gamma}{i}$ is finite.
\end{prop}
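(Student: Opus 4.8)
The plan is to derive both inequalities from \cref{lem: bounded filling} and the bounded-geometry estimates \cref{lem: norm estimate connected} and \cref{cor: estimate weighted to non-weighted}, using throughout that the weighted norm dominates the \(\ell^1\)-norm (every weight \(1+\ell_\Gamma(\gamma)\) is \(\geq 1\)) and that both filling functions are monotone as suprema over nested sets. In either inequality one may assume the right-hand side is finite, the statement being vacuous otherwise.

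\emph{First inequality.} Set \(A_i \defq 2K_{i-1}\); then (i) is essentially \cref{lem: bounded filling} with the supremum formed. Given a cycle \(c \in L_{i-1}\) with \(\norm{c} \leq x\), decompose \(c = c_1 + \dots + c_m\) into connected cycles; since \(R\) is \(1\)-separated, \(m \leq \size{\supp_R(c)} \leq \norm{c} \leq x\). Translating \(c_j\) by \(h_j^{-1}\) for \(h_j \in \supp_\Gamma(c_j)\) puts a basis element at word length \(0\) in its support, so \cref{cor: estimate weighted to non-weighted} gives \(\norm{h_j^{-1}c_j}^\Gamma \leq (1 + 2K_{i-1}\norm{c})\norm{c} \leq x + A_i x^2\). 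Now choose a filling \(y_j'\) of \(h_j^{-1}c_j\) with \(\norm{y_j'}^\Gamma \leq \weightedfillvolume[R](h_j^{-1}c_j) + \epsilon_j\), where \(\sum_j \epsilon_j \leq 1\) — refining the bare \(+1\) of \cref{lem: bounded filling} in this way is what produces the additive constant \(1\) — and put \(y \defq \sum_j h_j y_j'\). Then \(\partial y = c\) and, by \(\Gamma\)-invariance of the \(\ell^1\)-norm and \(\norm{\placeholder} \leq \norm{\placeholder}^\Gamma\),
\[
    \norm{y} \leq \sum_{j=1}^m \norm{y_j'}^\Gamma \leq m\,\weightedfilling{R,\Gamma}{i}(x + A_i x^2) + 1 \leq x\,\weightedfilling{R,\Gamma}{i}(x + A_i x^2) + 1\mpct{.}
\]
Taking the supremum over all such \(c\) proves (i).

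\emph{Second inequality.} Here the same idea is run in reverse, and this is where the only real difficulty lies. Fix a cycle \(c \in L_{i-1}\) with \(\norm{c}^\Gamma \leq x\); then \(\norm{c} \leq x\), so there is a filling \(b \in L_i\) with \(\norm{b} \leq \filling{R,\Gamma}{i}(x) + 1 \qdef F\). A priori \(b\) might stray arbitrarily far from \(\supp_R(c)\), leaving \(\norm{b}^\Gamma\) uncontrolled; the remedy is combinatorial. Decompose \(b = b_1 + \dots + b_r\) into its \(\Graph\)-connected components: distinct components have disjoint supports and disjoint boundary supports, so \(\norm{b}^\Gamma = \sum_k \norm{b_k}^\Gamma\) and \(c = \sum_k \partial b_k\) with pairwise disjoint summand supports, whence one may discard every \(b_k\) with \(\partial b_k = 0\) and assume all \(\partial b_k \neq 0\). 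Pick \(v_k \in \supp_R(\partial b_k) \subseteq \supp_R(c)\) and \(u_k \in \supp_R(b_k)\) with \(v_k \in \supp_R(\partial u_k)\); then \(1\)-separation gives \(\norm{v_k}^\Gamma \leq \norm{c}^\Gamma \leq x\), \cref{lem: norm estimate connected} gives \(\norm{u_k}^\Gamma \leq x + K_i\), and connectedness of \(b_k\) together with \cref{cor: estimate weighted to non-weighted} gives \(\norm{z}^\Gamma \leq x + K_i + 2K_i\norm{b_k}\) for every basis element \(z \in \supp_R(b_k)\). Summing yields \(\norm{b}^\Gamma \leq \norm{b}(x + K_i + 2K_i\norm{b}) \leq F(x + K_i + 2K_i F)\), which is bounded by \(B_i(\filling{R,\Gamma}{i}(x) + 1)^2 + x(\filling{R,\Gamma}{i}(x) + 1)\) for a suitable \(B_i\) depending only on \(K_i\). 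Taking the supremum over \(c\) proves (ii), and the final equivalence follows at once: (i) forces \(\filling{R,\Gamma}{i}\) to be finite whenever \(\weightedfilling{R,\Gamma}{i}\) is, and (ii) conversely. The main obstacle is precisely the localization in (ii) — bounding how far an economical filling can wander from the cycle it fills — which the passage to connected components plus bounded geometry resolves.
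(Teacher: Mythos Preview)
Your proof is correct and follows essentially the same route as the paper: part~(i) is the content of \cref{lem: bounded filling} (your \(\epsilon_j\)-refinement is exactly what is needed to land on the additive \(+1\)), and part~(ii) is the same decomposition of a near-optimal \(\ell^1\)-filling into \(\Graph\)-connected components, anchoring each component to \(\supp_R(c)\) via \cref{lem: norm estimate connected} and then invoking \cref{cor: estimate weighted to non-weighted}. The only cosmetic difference is that the paper bounds \(\sum_p \norm{v_p}^\Gamma\) via \(\sum_p \norm{\partial b_p}^\Gamma = \norm{c}^\Gamma\) before applying \cref{cor: estimate weighted to non-weighted}, whereas you bound each anchor individually by \(\norm{c}^\Gamma \leq x\); both lead to the same constant \(B_i = 3K_i\).
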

\begin{proof}
    Let $f(x) = \filling{R,\Gamma}{i}(x) + 1$, $c \in L_{i - 1}$ be an $(i - 1)$\=/cycle,
    and let $b = \sum_{u} b_u u \in L_i$
    be a reduced filling of c such that $\norm{b} \leq f(\norm{c})$.
    By \cref{lem: reduced filling estimate}, there exist $v_u \in \supp_R(c)$ such that
    \begin{align*}
        \norm{u}^\Gamma
        \leq \norm{v_u}^\Gamma + C_i \dnorm{b}
        \leq \norm{c}^\Gamma + C_i \norm{b},
    \end{align*}
    where in the second inequality we use that \(R\) is \(1\)-separated. Hence,
    \begin{align*}
        \norm{b}^\Gamma = \smash{\sum_{\mathclap{u \in \supp_R(b)}}} \abs{b_u}\norm{u}^\Gamma
         &\leq \norm{b}\bigr(\norm{c}^\Gamma + C_i\norm{b}\bigl) \\
         &\leq f(\norm{c})\norm{c}^\Gamma + C_i f(\norm{c})^2 \\
         &\leq f(\norm{c}^\Gamma)\norm{c}^\Gamma + C_i f(\norm{c}^\Gamma)^2\mpct{.}
    \end{align*}
    We conclude that $\weightedfilling{R,\Gamma}{i}(x) \leq C_i f(x)^2 + x \cdot f(x)$.

    Let $g(x) = \weightedfilling{R,\Gamma}{i}(x)$ and $c \in L_{i - 1}$ be a cycle. We first assume that $i \geq 2$ and
    consider the filling \(y\) of \(c\) from \cref{lem: bounded filling}. It satisfies
    $\norm{y} \leq \norm{c} g\bigl(\norm{c} + 2K_{i-1} \norm{c}^2\bigr) + 1$
    and thus,
    \[\filling{R,\Gamma}{i}(x) \leq x \cdot g(x + 2K_{i - 1}x^2) + 1.\qedhere\]
\end{proof}

\begin{rem}
    For $n = 1$, the function $\filling{R,\Gamma}{1}$ is finite if and only if $\Gamma$ is
    finite. This is because $\fillvolume[R](\gamma - 1)$ grows like $\ell_\Gamma(\gamma)$ for $\gamma \in \Gamma$.
    In contrast, $\weightedfilling{R,\Gamma}{1}$ is always linear, as $\norm{\gamma - 1}^\Gamma$ also grows linearly in
    $\ell_\Gamma(\gamma)$.
\end{rem}

\begin{cor}%
    \label{cor: discrete weighted finite}
    $\weighteddfilling{R,\Gamma}{n}$ takes finite values.
\end{cor}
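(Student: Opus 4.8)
The plan is to obtain this at once by combining \cref{thm: dfilling finite} with \cref{prop: polynomial equivalence}. First note that the discrete norm on \(R\) is literally \(1\)\=/separated, since \(\dabs{x} \in \{0,1\}\) with \(\dabs{x} = 1\) for every \(x \neq 0\); hence \cref{prop: polynomial equivalence} is applicable with \(R\) carrying the discrete norm. Under the index convention fixed above, \(\dfilling{R,\Gamma}{n}\) and \(\weighteddfilling{R,\Gamma}{n}\) are precisely the functions \(\filling{R,\Gamma}{n}\) and \(\weightedfilling{R,\Gamma}{n}\) attached to the discrete norm, so the only remaining task is to carry this translation of notation through.

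Concretely, for \(n \geq 2\) one applies the second estimate of \cref{prop: polynomial equivalence} at \(i = n\): there is a constant \(B_n > 0\) with
\[
    \weighteddfilling{R,\Gamma}{n}(x) \leq B_n\bigl(\dfilling{R,\Gamma}{n}(x) + 1\bigr)^2 + x\bigl(\dfilling{R,\Gamma}{n}(x) + 1\bigr)
\]
for all \(x \geq 0\). Since \(\dfilling{R,\Gamma}{n}\) takes finite values by \cref{thm: dfilling finite}, the right-hand side is finite for every \(x\), and therefore so is \(\weighteddfilling{R,\Gamma}{n}\). The remaining case \(n = 1\) is already recorded in the preceding remark: \(\norm{\gamma - 1}^\Gamma\) grows linearly in \(\ell_\Gamma(\gamma)\), so \(\weighteddfilling{R,\Gamma}{1}\) is linear and in particular finite. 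There is no genuine obstacle here; the only points requiring care are the index shift between the discrete notation \(\dfilling{R,\Gamma}{n}\), \(\weighteddfilling{R,\Gamma}{n}\) and the underlying \(\altfilling{R,\Gamma}{n-1}\), \(\weightedaltfilling{R,\Gamma}{n-1}\), together with the verification of the \(1\)\=/separation hypothesis -- both of which have already been dispatched above.
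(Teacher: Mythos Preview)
Your proposal is correct and matches the paper's intended argument: the corollary is stated without proof immediately after \cref{prop: polynomial equivalence}, and is meant to follow exactly as you describe, by combining the second inequality there with \cref{thm: dfilling finite} (the discrete norm being visibly \(1\)\=/separated). Your separate treatment of \(n=1\) via the preceding remark is a small extra, but entirely in line with the paper.
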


\subsection{A non-finite example}%
\label{sec: finiteness:non-finite}
In~\cite{bader+kropholler+vankov25}, the authors ask the following question about the finiteness of filling functions.
\begin{question*}[\cite{bader+kropholler+vankov25}*{2.10}]%
    \label{question: finiteness}
    For which norms $\abs{\placeholder}$ does the \(n\)th homological filling function take finite values?
\end{question*}
We conclude this section with a short example, answering this question in the negative for all normed rings which are not $\epsilon$\=/separated.
This justifies why $\epsilon$\=/separation is a natural condition in the context of filling functions if one wants
$\filling{R,\Gamma}{n}$ to yield a finite invariant.

\begin{example}%
    \label{example: non-finite filling function}
    Assume that \(R\) is not $\epsilon$\=/separated for every $\epsilon > 0$.
    Then there exists a strictly increasing sequence $n_i \in \bbN$ and elements $r_i \in R$ such that $\frac{1}{n_i + 1} < \abs{r_i} \leq \frac{1}{n_i}$.
    By \cref{lem: symmetric norm,lem: equivalent filling from eqeuivalent norms}, we may assume that $\abs{r} = \abs{{-r}}$ for all $r \in R$.

    Let $\Gamma = \bbZ^2 = \langle x,y \mid [x,y]\rangle$ and \(X\) be the universal cover of its presentation complex, that is, $X = \bbR^2$ with
    $X^{(1)} = \bbZ^2$. Consider the \(1\)\=/cycles $\alpha_i = r_i \cdot [x^{n_i},y^{n_i}]$ in \(X\).
    They satisfy $\norm{\alpha_i} = \abs{r_i} \cdot 4n_i \leq 4$ and
    $\fillvolume[R](\alpha_i) = \abs{r_i} \cdot {n_i}^2 > \frac{n_i^2}{n_i + 1} > \frac{n_i}{2}$.
    In particular $\filling{R,\bbZ^2}{2}(4) \geq \frac{n_i}{2}$ for all \(i\), so $\filling{R,\bbZ^2}{2}$ is not finite.
\end{example}

However, under the assumption that \(R\) is $\epsilon$\=/separated one could expect a positive answer to the above question.

\section{Quasi-isometry invariance of filling functions}\label{sec: QI invariance}
The goal of this section is to prove quasi-isometric invariance of $\filling{R,\Gamma}{n}$ and $\weightedfilling{R,\Gamma}{n}$ for groups of type~$\FP_n(R)$
under the assumption that the homological filling functions of $\Gamma$ take finite values.
The proofs in this section build upon the proofs in~\cite{bader+kropholler+vankov25}*{Section 2}.

In the following, let \(G\) and \(H\) be finitely generated groups of type~$\FP_n(R)$. Let \(S\) and \(T\) be finite generating sets
for \(G\) and \(H\), respectively. Furthermore, let $L^G_\ast$ and $L^H_\ast$ be free resolutions as described in \cref{rem: cayley resolution}.

\begin{lem}%
    \label{lem: quasi-isometry partial chain map}
    Let \(R\) be \(1\)\=/separated.
    Assume that $\weightedfilling{R,H}{i}(l) < \infty$ for all $2 \leq i \leq n$ and all \(l\).
    Let $f \colon G \to H\noic$ be a $(K,K)$\=/quasi-isometry. Then for $i \leq n$ there exist \(R\)\=/linear maps $f_i \colon L^G_i \to L^H_i\Noic$ and constants $D_i$, $\tilde{D}_i$ such that
    \begin{refenum}
        \item $f_0$ is the \(R\)\=/linear extension of \(f\);
        \item $f_{i-1}\partial = \partial f_i$ for $1 \leq i \leq n$;
        \item $\norm{f_i(x)} \leq D_i \norm{x}$ for all $x \in L^G_i\Noic$ and $i \leq n$;
        \item $\norm{f_i(x)}^H \leq \tilde{D}_i \norm{x}^G$ for all $x \in L^G_i\Noic$ and $i \leq n$.
    \end{refenum}
\end{lem}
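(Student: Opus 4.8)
The plan is to build the maps $f_i$ by induction on $i$, using the chain-map condition $\partial f_i = f_{i-1}\partial$ to determine $f_i$ on the $R$-basis $GB_i^G$ up to a cycle in $L_i^H$, and exploiting that, after translating by a suitable element of $H$, these cycles range over only finitely many values. First I would take $f_0$ to be the $R$-linear extension of $f$, giving (1); since $f$ sends the $R$-basis $G$ of $L_0^G$ into the basis $H$ of $L_0^H$ and the $\ell^1$-norm is $H$-invariant, $\norm{f_0(x)} \le \norm{x}$, so $D_0 = 1$, while the quasi-isometry inequality gives $\ell_H(f(\gamma)) \le K\ell_G(\gamma) + K + \ell_H(f(e))$ and hence a linear bound $\norm{f_0(x)}^H \le \tilde{D}_0 \norm{x}^G$. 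It is convenient to record $f_0$ in the normal form $f_0(\gamma) = f(\gamma)\cdot\mathbf 1_{L_0^H}$, which the induction will preserve.

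For the inductive step I would assume $f_0,\dots,f_{i-1}$ built with (1)--(4) and, moreover, in the form $f_{i-1}(\gamma v) = f(\gamma)\,y_{w_v(\gamma)}$ for $RG$-basis elements $v \in B_{i-1}^G$, where $w_v(\gamma)$ ranges over a finite set $\calF_v$ of boundaries in $L_{i-1}^H$ and $y_w$ is a fixed chain with $\partial y_w = w$. For $b \in B_i^G$ set $z_b^\gamma \defq f_{i-1}(\partial(\gamma b)) = f_{i-1}(\gamma\,\partial b)$. For $i \ge 2$ the identity $\partial f_{i-1} = f_{i-2}\partial$ shows $z_b^\gamma$ is a cycle, hence a boundary since $L_\ast^H$ is a resolution; for $i = 1$ the chain $z_{e_s}^\gamma = f(\gamma s) - f(\gamma)$ lies in $\ker\epsilon = \im\partial_1^H$ and is again a boundary. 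The key point is that $w_b(\gamma) \defq f(\gamma)^{-1}z_b^\gamma$ takes only finitely many values as $\gamma$ varies: writing $\partial b = \sum_j a_j \mu_j b_j'$ with $\mu_j \in G$, $b_j' \in B_{i-1}^G$ and inserting the normal form for $f_{i-1}$ yields $f(\gamma)^{-1}z_b^\gamma = \sum_j a_j\bigl(f(\gamma)^{-1}f(\gamma\mu_j)\bigr)\,y_{w_{b_j'}(\gamma\mu_j)}$, and each $f(\gamma)^{-1}f(\gamma\mu_j)$ lies in the ball $B_H(e, K\ell_G(\mu_j) + K)$, which is finite because $f$ is a $(K,K)$-quasi-isometry and $H$ is finitely generated. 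I would then fix a filling $y_w$ of each $w$ in this finite set $\calF_b$ and define $f_i(\gamma b) \defq f(\gamma)\,y_{w_b(\gamma)}$, extended $R$-linearly. Then $\partial f_i(\gamma b) = f(\gamma)w_b(\gamma) = z_b^\gamma = f_{i-1}(\partial(\gamma b))$, so $\partial f_i = f_{i-1}\partial$ by linearity, which is (2), and the normal form propagates to degree $i$. For the norm estimates, $H$-invariance of the $\ell^1$-norm gives $\norm{f_i(\gamma b)} = \norm{y_{w_b(\gamma)}} \le D_i$ with $D_i \defq \max_{b \in B_i^G}\max_{w\in\calF_b}\norm{y_w} < \infty$ (using that $B_i^G$ and each $\calF_b$ are finite), while $\norm{f_i(\gamma b)}^H \le (1 + \ell_H(f(\gamma)))\max_w\norm{y_w}^H \le \tilde{D}_i(1 + \ell_G(\gamma))$ by the estimate on $\ell_H(f(\gamma))$; since the $\ell^1$- and weighted norms are additive over the basis $GB_i^G$, summing these bounds over the support of a general element gives (3)--(4). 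The hypothesis that $\weightedfilling{R,H}{i}$ is finite enters only to control the weighted norm of the chosen fillings $y_w$, and is in fact automatic once $\calF_b$ is known to be finite.

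I expect the main obstacle to be establishing the finiteness of the sets $\calF_b$ cleanly: this is what forces one to construct $f_i$ on all of $GB_i^G$ at once, rather than one $\Gamma$-orbit at a time, so that the normal form is available for every basis element of $L_{i-1}^G$ when passing to degree $i$; it is also why the degree-$1$ step needs separate treatment, since $f_{-1}$ does not exist and one must instead use exactness of the augmentation $L_\ast^H \to R$ at $L_0^H$. The remaining work is routine bookkeeping: propagating the quasi-isometry constants and repeatedly using that the $\ell^1$-norm is $\Gamma$-invariant while the weighted norm is sub-multiplicative under left translation and additive over the basis.
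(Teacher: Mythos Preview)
Your argument is correct and takes a genuinely different route from the paper's. The paper constructs \(f_k(gb)\) for \(k\ge 2\) by applying \cref{lem: bounded filling} to the cycle \(c=f_{k-1}(\partial(gb))\): that lemma decomposes \(c\) into connected pieces, translates each to the identity, and invokes the finiteness of \(\weightedfilling{R,H}{k}\) together with the connectivity estimates of \cref{cor: estimate weighted to non-weighted} to bound both \(\norm{y}\) and \(\norm{y}^H\) for the resulting filling \(y\). You instead maintain the normal form \(f_i(\gamma b)=f(\gamma)\,y_{w_b(\gamma)}\) and use the quasi-isometry inequality to show that the translated cycles \(w_b(\gamma)=f(\gamma)^{-1}f_{i-1}(\partial(\gamma b))\) take only finitely many values as \(\gamma\) ranges over \(G\); fixing one filling per value and taking the maximum of their norms gives the constants.

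Your approach is more elementary: it bypasses \cref{lem: bounded filling} entirely, and as you observe, it shows that neither the \(1\)\=/separation hypothesis on \(R\) nor the finiteness of \(\weightedfilling{R,H}{i}\) is actually needed for this lemma. The paper's route, by contrast, produces explicit constants \(D_k,\tilde D_k\) expressed directly in terms of \(\weightedfilling{R,H}{k}\), which dovetails with its broader aim of relating weighted and unweighted filling functions quantitatively (\cref{prop: polynomial equivalence}) and reuses machinery already developed.
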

\begin{proof}
    Let $f_0 \colon RG \to RH$ be the linear extension of \(f\). Then $\norm{f_0(g)} = \norm{g}$ for $g \in G$ and
    \[
        \norm{f_0(g)}^H = 1 + \ell_H(f(g)) \leq 1 + K\ell_G(g) + K \leq (1+K) \norm{g}^G\mpct{.}
    \]
    We set $D_0 \defq 1$ and $\tilde{D}_0 \defq 2K$.

    An \(R\)\=/basis of $L^G_1 = RG^{\size{S}}$ is given by $\{g b_s \mid g \in G,\, s \in S\}$. For such a basis element $g b_s$ we
    have $\partial g b_s = g (s - 1) = g s - g$.
    Let $\gamma_{g,s}$ be a path from $f_0(g)$ to $f_0(gs)$ in $\Cay(H, T)$.
    Since \(f\) is a $(K,K)$\=/quasi-isometry, we can arrange that $\gamma_{g,s}$ has length at most $2K + 1$.
    Because we have assumed that the resolution $L^H_\ast$ coincides with $C_i(\Cay(H,S);R)$ for $i=0,1$, we may
    consider $\gamma_{g,s}$ as an element of $L^H_1 = RH^{\size{T}}$ and define $f_1(gb_s) \defq \gamma_{g,s}$.
    Then $\gamma_{g,s}$ satisfies $\norm{\gamma_{g,s}} \leq (2K + 1) \abs{1_R}$ and we set $D_1 \defq 2K + 1$.
    Write $\gamma_{g,s} = \sum_{j = 0}^m e_j$ with $m \leq 2K$.
    It follows from \cref{lem: norm estimate connected} that $\norm{e_j}^H \leq \norm{e_0}^H + 2K \cdot 2K_1$ and $\norm{e_0}^H \leq \norm{f_0(g)}^H + K_1$.
    Therefore,
    \begin{align*}
        \norm{\gamma_{g,s}}^H
         &\leq (2K + 1)(\norm{e_0}^H + 2K \cdot 2K_1) \\
         &\leq (2K + 1)(\tilde{D}_0\norm{gb_s}^G + K_1 + 4KK_1) \\
         &\leq (2K + 1)(\tilde{D}_0 + K_1 + 4KK_1)\norm{gb_s}^G\mpct{,}
    \end{align*}
    where we use that $\norm{f_0(g)}^H \leq \tilde{D}_0 \norm{g}^G = \tilde{D}_0\norm{gb_s}^G$ in the second inequality.
    We define $\tilde{D}_1 \defq (2K + 1)(\tilde{D}_0 + K_1 + 4KK_1)$.

    Now assume that $f_i$ has been defined for $i < k \leq n$. Let $gb$ be some \(R\)\=/basis element of $L^G_k$. Then
    \[\partial f_{k-1}(\partial gb) = f_{k-2}(\partial \partial gb) = f_{k-2}(0) = 0\mpct{,}\]
    so $c \defq f_{k-1}(\partial g b)$ is a cycle. We have
    \begin{align*}
         &\norm{c} = \norm{f_{k-1}(\partial gb)} \leq D_{k-1}\norm{\partial gb} = D_{k-1}\norm{\partial b} \leq D_{k-1}C_k\mpct{,} \\
         &\norm{c}^H = \norm{f_{k-1}(\partial gb)}^H \leq \tilde{D}_{k-1} \norm{\partial g b}^G \leq M_k \tilde{D}_{k-1} \norm{gb}^G\mpct{,}
    \end{align*}
    where $C_k \defq \max\{ \norm{\partial b} \mid b \text{ an } RG\text{-basis element of } L^G_k \}$ and
    $M_k$ is the constant obtained from \cref{lem: equivariant bounded} applied to $\partial_k \colon L^G_k \to L^G_{k-1}$.
    \pagebreak
    Let \(y\) be the filling of \(c\) obtained from \cref{lem: bounded filling} and define $f_k(gb) \defq y$. Then
    \begin{align*}
        \norm{y}^H
         &\leq \norm{c}\norm{c}^H A_H^k(\norm{c}) \\
         &\leq \norm{c}^H D_{k-1}C_k \cdot A_H^k(D_{k-1}C_k) \\
         &\leq \norm{gb}^G \underbrace{M_k \tilde{D}_{k-1} D_{k-1}C_k \cdot A_H^k(D_{k-1}C_k)}_{\qdef \tilde{D}_k}
    \end{align*}
    and
    \[
        \norm{y} \leq \norm{c} A_H^k(\norm{c}) \leq D_{k-1}C_k \cdot A_H^k(D_{k-1}C_k) \qdef D_k\mpct{.}\qedhere
    \]
\end{proof}

\begin{lem}%
    \label{lem: quasi-isometry partial homotopy}
    Let $R\noic$ be \(1\)\=/separated.
    We retain the assumptions of \cref{lem: quasi-isometry partial chain map} up to degree $n-1$.
    Let $f \colon G \to H\noic$ and $h \colon H \to G\noic$ be $(K,K)$\=/quasi-isometries such that $h \circ f$ is \(K\)\=/close to
    $\id_G$. Let $f_\ast$ and $h_\ast$ be the partial chain maps obtained from \cref{lem: quasi-isometry partial chain map}.
    Further assume $\filling{R,G}{n}(l) < \infty$ for all \(l\).

    Then for $i \leq n - 1$ there exist \(R\)\=/linear maps $s_i \colon L^G_i \to L^G_{i+1}$ and constants $E_i$, $\tilde{E}_i$ such that
    \begin{refenum}
        \item $\partial s_i + s_{i-1} \partial = \id - \,h_i \circ f_i$ for $i \leq n - 1$;
        \item $\norm{s_i(x)} \leq E_i \norm{x}$ for all $x \in L^G_i$ and $i \leq n - 1$;
        \item $\norm{s_i(x)}^G \leq \tilde{E}_i \norm{x}^G$ for all $x \in L^G_i$ and $i \leq n - 1$.
    \end{refenum}
\end{lem}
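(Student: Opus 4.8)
The plan is to build the homotopy $s_\ast$ by induction on the degree, in direct parallel with the construction of the partial chain map in \cref{lem: quasi-isometry partial chain map}, carrying along both an $\ell^1$\nobreakdash-estimate and a weighted estimate at every stage.

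\emph{Base case \((i=0)\).} Since \(s_{-1}=0\), we need \(s_0\colon L^G_0\to L^G_1\) with \(\partial s_0=\id-h_0f_0\). On a basis element \(g\in G\) one has \((\id-h_0f_0)(g)=g-h(f(g))\), a \(0\)\=/cycle; because \(h\circ f\) is \(K\)\=/close to \(\id_G\) and \(L^G_{\le 1}\) is the cellular chain complex of \(\Cay(G,S)\), we choose a path \(\sigma_g\) in \(\Cay(G,S)\) from \(h(f(g))\) to \(g\) of combinatorial length at most \(K\), set \(s_0(g)\defq\sigma_g\in L^G_1\), and extend \(R\)\=/linearly. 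Then \(\norm{s_0(g)}\) is bounded by a constant depending only on \(K\) and \(R\), and applying \cref{lem: norm estimate connected} along the \(\le K\) edges of \(\sigma_g\), starting from an edge incident to \(g\), gives \(\norm{s_0(g)}^G\le\norm{g}^G+3KK_1\); passing to \(R\)\=/linear combinations yields constants \(E_0,\tilde E_0\) as required.

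\emph{Inductive step.} Suppose \(s_0,\dots,s_{i-1}\) have been constructed satisfying the asserted identities and estimates, where \(1\le i\le n-1\). For a basis element \(u=gb\) of \(L^G_i\) set
\[
    z_u\defq(\id-h_if_i)(u)-s_{i-1}(\partial u)\in L^G_i .
\]
Using that \(f_\ast\) and \(h_\ast\) are chain maps, that \(\partial\partial=0\), and the inductive identity \(\partial s_{i-1}+s_{i-2}\partial=\id-h_{i-1}f_{i-1}\) in degree \(i-1\), a direct computation gives \(\partial z_u=0\), so \(z_u\) is an \(i\)\=/cycle. From the norm bounds on \(f_i,h_i\) in \cref{lem: quasi-isometry partial chain map} and on \(s_{i-1}\) (induction), together with \(\norm{u}=1\), \(\norm{\partial u}\le C_i\defq\max\{\norm{\partial b'}\mid b'\text{ a basis element of }L^G_i\}\) and \cref{lem: equivariant bounded} applied to \(\partial_i\), we obtain a constant \(c_i\), independent of \(g\), with \(\norm{z_u}\le c_i\) and \(\norm{z_u}^G\le c_i\norm{u}^G\). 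Now \(z_u\) must be filled in \(L^G_{i+1}\): for \(i+1=n\) the hypothesis \(\filling{R,G}{n}<\infty\) together with \cref{prop: polynomial equivalence} gives \(\weightedfilling{R,G}{n}<\infty\), so \cref{lem: bounded filling} provides a filling \(y_u\) of \(z_u\) with
\[
    \norm{y_u}\le\norm{z_u}\,A^{n}_G(\norm{z_u})
    \qquad\text{and}\qquad
    \norm{y_u}^G\le\norm{z_u}\norm{z_u}^G\,A^{n}_G(\norm{z_u});
\]
for \(i+1<n\) one obtains a filling with the same two estimates by repeating the orbit-counting argument of \cref{thm: dfilling finite} (using \cref{prop: finite orbits of finite connected subcomplexes,prop: step thickening finite type} and \(H_i(L^G_\ast)=0\)), which in the cases of interest — discrete norm, or \(R=\bbZ\) — is again covered by \cref{lem: bounded filling} since the lower weighted filling functions are then finite. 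Since \(\norm{z_u}\le c_i\), the argument \(A^{i+1}_G(\norm{z_u})\) is bounded by a constant, so \(\norm{y_u}\le D'\) and \(\norm{y_u}^G\le D'\norm{u}^G\) for some constant \(D'\). Define \(s_i(u)\defq y_u\) and extend \(R\)\=/linearly. By construction \(\partial s_i(u)+s_{i-1}(\partial u)=z_u+s_{i-1}(\partial u)=(\id-h_if_i)(u)\), and \(R\)\=/linearity propagates this identity to all of \(L^G_i\); the \(R\)\=/linear estimates give constants \(E_i,\tilde E_i\), completing the induction.

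\emph{Main obstacle.} The delicate point is the weighted estimate. Although \(z_u\) is not \(G\)\=/equivariant in \(u\), its weighted norm must be controlled linearly in \(\norm{u}^G=1+\ell_G(g)\); this relies on \cref{cor: estimate weighted to non-weighted}, \cref{lem: equivariant bounded}, and the linear bounds of \cref{lem: quasi-isometry partial chain map}, and it is essential that the \emph{unweighted} norm \(\norm{z_u}\) — which governs the argument of \(A^{i+1}_G\) in \cref{lem: bounded filling} — stays bounded by a constant independent of \(g\), so that a bounded unweighted filling is upgraded to a filling whose weighted norm is only linear in \(\norm{u}^G\). A secondary technical point is the construction of bounded fillings in the intermediate degrees \(i+1<n\); this is handled exactly as in \cref{thm: dfilling finite}, via the finitely many \(\Gamma\)\=/orbits of bounded connected supports and the finite-type thickenings, so that after decomposing \(z_u\) into connected cycles and translating each summand to a bounded neighbourhood of the identity one fills inside a finite-type subcomplex and translates back.
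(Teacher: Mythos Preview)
Your construction is the same as the paper's: build \(s_0\) from short Cayley-graph paths witnessing \(d_G(g,h(f(g)))\le K\), then inductively set \(s_i(gb)\) to be a controlled filling of the cycle \(z_{gb}=(\id-h_if_i)(gb)-s_{i-1}(\partial gb)\) supplied by \cref{lem: bounded filling}, carrying along both norm bounds. The paper does exactly this and records the resulting constants in closed form.

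There is one point where your argument goes astray. For the intermediate degrees \(i+1<n\) you propose to produce fillings by ``repeating the orbit-counting argument of \cref{thm: dfilling finite}'', with the caveat that this only works for the discrete norm or \(R=\bbZ\). That detour is both unnecessary and, as stated, insufficient for the general \(1\)\=/separated ring in the hypothesis. The missing observation is that the phrase ``we retain the assumptions of \cref{lem: quasi-isometry partial chain map} up to degree \(n-1\)'' already forces \(\weightedfilling{R,G}{j}(l)<\infty\) for \(2\le j\le n-1\): these are precisely the hypotheses needed to construct the partial chain map \(h_\ast\colon L^H_\ast\to L^G_\ast\) via \cref{lem: quasi-isometry partial chain map}. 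Hence \cref{lem: bounded filling} applies directly at every intermediate step, and the extra hypothesis \(\filling{R,G}{n}<\infty\) (upgraded to \(\weightedfilling{R,G}{n}<\infty\) via \cref{prop: polynomial equivalence}, as you correctly note) is only needed for the final step \(i=n-1\). The paper invokes \cref{lem: bounded filling} uniformly without further comment for this reason.
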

\begin{proof}
    For $g \in G$ let $\gamma_g$ be a path from \(g\) to $h(f(g))$ in $\Cay(G, S)$. Since $h \circ f$ is \(K\)\=/close to $\id_G$,
    we may choose $\gamma_g$ such that it has length at most \(K\). We define $s_0$ to be the \(R\)\=/linear extension such that
    $s_0(g) = \gamma_g$. Then $\norm{s_0(g)} \leq K \norm{g}$ and we set $E_0 \defq K$.
    Similar to the beginning of the proof of \cref{lem: quasi-isometry partial chain map} we compute that
    $\norm{\gamma_g}^G \leq \tilde{E}_1 \norm{g}^G$, where $\tilde{E}_1 \defq K (1 + (K - 1) \cdot 2K_1)$.

    Assume that we have constructed $s_{i-1} \colon L^G_{i-1} \to L^G_i$ for $i \leq n - 1$ and let $c = g b$ be an \(R\)\=/basis element of $L^G_i$.
    Note that
    \[
        \partial s_{i-1}(\partial c) = \partial c  - \partial h_i(f_i(c)) = \partial \bigl(c - h_i(f_i(c))\bigr)\mpct{,}
    \]
    hence $c' \defq c - h_i(f_i(c)) - s_{i-1}(\partial c)$ is a cycle.
    As in the proof of \cref{lem: quasi-isometry partial chain map} we set $C_i = \max\{ \norm{\partial b} \mid b \text{ an } RG\text{-basis element of } L^G_i \}$
    to conclude that $\norm{c'} \leq \abs{1_R} + D'_i D_i\abs{1_R} + E_{i-1}C_i \qdef F_i$.
    Define $s_i(c)$ to be the filling \(y\) of $c'$ obtained from \cref{lem: bounded filling}.
    Then
    \begin{align*}
        \norm{y}
         &\leq \norm{c'} A_G^{i+1}(\norm{c'}) \leq F_i A_G^{i+1}(F_i) \qdef E_i\mpct{,} \\
        \norm{y}^G
         &\leq \norm{c'}\norm{c'}^G A_G^{i+1}(\norm{c'}) \leq \underbrace{E_i \cdot (1 + \tilde{D}_i\tilde{D}'_i + \tilde{E}_{i-1}M_i)}_{\qdef \tilde{E}_i} \norm{c}^G\mpct{,}
    \end{align*}
    where $M_i$ is the constant obtained from \cref{lem: equivariant bounded} applied to $\partial_i$.
\end{proof}

\begin{rem}
    The assumption that \(R\) is \(1\)\=/separated is only needed for the bound on the weighted norm in \cref{lem: bounded filling}.
    If one is only interested in the non-weighted filling functions one can always construct the respective maps from
    \cref{lem: quasi-isometry partial chain map,lem: quasi-isometry partial homotopy} only satisfying bounds in the non-weighted norm.
    Without the need for a bound on the weighted norm \cref{lem: bounded filling} can be replaced by the observation that
    every cycle \(c\) admits a filling \(y\) such that $\norm{y} \leq \filling{R,\Gamma}{i}(\norm{c}) + 1$.
    For details see the proofs of~\cite{bader+kropholler+vankov25}*{Section 2}.
    If one works with free resolutions as in \cref{rem: cayley resolution} the proofs can be followed almost verbatim.
\end{rem}

\begin{thm}[store=qiInvarianceIfFinite]%
    \label{thm: quasi-isometry invariance of filling}
    Let $G\noic$ and $H\noic$ be quasi-isometric groups of type $\FP_n(R)$.
    Assume $\filling{R,G}{i}(l),\filling{R,H}{i}(l)<\infty$ for all $2 \leq i\leq n$ and all \(l\).
    Then $\filling{R,G}{n}\approx\filling{R,H}{n}$.
\end{thm}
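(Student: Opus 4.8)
The plan is to push an $(n-1)$-cycle from the resolution $L^G_\ast$ over to $L^H_\ast$ along a partial chain map, fill it there using finiteness of $\filling{R,H}{n}$, pull the filling back to $L^G_\ast$, and repair the resulting discrepancy with a partial chain homotopy; tracking the $\ell^1$-norms through this process will yield $\filling{R,G}{n}\preccurlyeq\filling{R,H}{n}$, and since the hypotheses are symmetric in $G$ and $H$ the reverse inequality follows the same way, giving $\approx$. The one point that needs care before starting is that the theorem assumes neither that $R$ is $1$-separated nor that the \emph{weighted} filling functions are finite, whereas \cref{lem: quasi-isometry partial chain map,lem: quasi-isometry partial homotopy} are stated under those stronger hypotheses and also bound the weighted norm. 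As indicated in the remark following \cref{lem: quasi-isometry partial homotopy}, the same constructions run verbatim if one only demands control of the $\ell^1$-norm $\norm{\placeholder}$, replacing \cref{lem: bounded filling} by the elementary observation that any cycle admits a filling whose $\ell^1$-norm exceeds the relevant value of the non-weighted filling function by at most $1$ (the slack being there because the infimum defining $\fillvolume[R]$ need not be attained).

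Concretely, I would fix a $(K,K)$-quasi-isometry $f\colon G\to H$ with a $(K,K)$-quasi-inverse $h\colon H\to G$ so that $h\circ f$ is $K$-close to $\id_G$, and obtain, from the non-weighted form of \cref{lem: quasi-isometry partial chain map} applied both to $f$ and to $h$, partial chain maps $f_\ast\colon L^G_\ast\to L^H_\ast$ and $h_\ast\colon L^H_\ast\to L^G_\ast$ in degrees $\le n$ with $\norm{f_i(x)}\le D_i\norm{x}$ and $\norm{h_i(x)}\le D'_i\norm{x}$, and from the non-weighted form of \cref{lem: quasi-isometry partial homotopy}, $R$-linear maps $s_\ast\colon L^G_\ast\to L^G_{\ast+1}$ in degrees $\le n-1$ with $\partial s_i+s_{i-1}\partial=\id-h_i\circ f_i$ and $\norm{s_i(x)}\le E_i\norm{x}$. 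Now take an $(n-1)$-cycle $c\in L^G_{n-1}$ with $\norm{c}\le l$. Since $\partial f_{n-1}(c)=f_{n-2}(\partial c)=0$, the chain $f_{n-1}(c)$ is a cycle in $L^H_{n-1}$ of norm at most $D_{n-1}l$, so it has a filling $b\in L^H_n$ with $\norm{b}\le\filling{R,H}{n}(D_{n-1}l)+1$. Then $h_n(b)$ has boundary $h_{n-1}(f_{n-1}(c))$, and evaluating the homotopy relation in degree $n-1$ on $c$ (using $\partial c=0$) gives $c-h_{n-1}(f_{n-1}(c))=\partial s_{n-1}(c)$, so $h_n(b)+s_{n-1}(c)$ is a filling of $c$ in $L^G_n$ of $\ell^1$-norm at most $D'_n\bigl(\filling{R,H}{n}(D_{n-1}l)+1\bigr)+E_{n-1}l$. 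Taking the supremum over all such $c$, and absorbing $D_{n-1},D'_n,E_{n-1}$ into a single pair of constants $(C,D)$ using that $\filling{R,H}{n}$ is non-decreasing, produces an inequality of the form $\filling{R,G}{n}(l)\le C\,\filling{R,H}{n}(Cl+D)+Cl+D$, i.e.\ $\filling{R,G}{n}\preccurlyeq\filling{R,H}{n}$; interchanging $G$ and $H$ gives the reverse relation, hence $\filling{R,G}{n}\approx\filling{R,H}{n}$.

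I expect no genuine obstacle in this argument: the substantial work has been front-loaded into \cref{lem: quasi-isometry partial chain map,lem: quasi-isometry partial homotopy}, where the finiteness hypotheses are actually consumed in order to build norm-controlled chain maps and homotopies. What is left is bookkeeping — verifying that these maps are needed only in degrees $\le n$ and $\le n-1$ (so that type $\FP_n(R)$, which makes $L_i$ finitely generated only up to $i=n$, is enough), that the several multiplicative and additive constants collapse to the single pair appearing in the definition of $\preccurlyeq$, and that the $+1$ coming from the non-attained infimum is carried along. The only conceptual care point, flagged above, is to bypass the weighted-norm content of those two lemmas — a harmless matter since the theorem makes no $1$-separation assumption on $R$ and one simply uses their non-weighted analogues throughout.
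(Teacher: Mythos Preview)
Your proposal is correct and follows essentially the same argument as the paper's proof: push a cycle along $f_\ast$, fill in $L^H_\ast$, pull back via $h_n$, and repair with the homotopy $s_{n-1}$, yielding the bound $\norm{h_n(b)+s_{n-1}(z)}\le D'_n(\filling{R,H}{n}(D_{n-1}\norm{z})+1)+E_{n-1}\norm{z}$ and hence $\filling{R,G}{n}\preccurlyeq\filling{R,H}{n}$, then conclude by symmetry. You also correctly identify and handle the one subtlety---that the two lemmas are stated under a $1$-separation hypothesis for the weighted norm---by invoking the remark that their non-weighted versions go through without it, exactly as the paper does.
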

\begin{proof}
    Let $f \colon G \to H$ be a quasi-isometry with quasi-inverse $h \colon H \to G$, $f_\ast, h_\ast$ be the partial chain maps obtained from \cref{lem: quasi-isometry partial chain map} and $D_i$, $D'_i$ the associated constants.

    Let $s_i \colon L^G_i \to L^G_{i + 1}$ and $E_i$ be the maps and constants obtained from \cref{lem: quasi-isometry partial homotopy}.
    If $z \in L^G_{n-1}$ is an $(n-1)$\=/cycle, so is $f_{n-1}(z)$ and $\norm{f_{n-1}(z)} \leq D_{n-1} \norm{z}$.
    By assumption, there exists a filling $b \in L^H_{n}$ of $f_{n-1}(z)$ such that $\norm{b} \leq \filling{R,H}{n}(D_{n-1}\norm{z}) + 1$. Then $h_n(b) + s_{n-1}(z)$ is a filling of \(z\), as
    \begin{align*}
        \partial h_n(b) + \partial s_{n-1}(z)
         &= h_{n-1}(\partial b) + \partial s_{n-1}(z) \\
         &= h_{n-1}(f_{n-1}(z)) + \bigl(z - h_{n-1}f_{n-1}(z) - s_{n-2}(\partial z)\bigr) = z\mpct{.}
    \end{align*}
    Moreover,
    \begin{align*}
        \norm{h_n(b) + s_{n-1}(z)}
         &\leq D'_n \norm{b} + E_{n-1}\norm{z} \\
         &\leq D'_n \left(\filling{R,H}{n}(D_{n-1}\norm{z}) + 1\right) + E_{n-1}\norm{z}\mpct{,}
    \end{align*}
    so $\filling{R,G}{n} \preccurlyeq \filling{R,H}{n}$. The argument is symmetric, completing the proof.
\end{proof}

As a direct consequence of \cref{thm: quasi-isometry invariance of filling,thm: dfilling finite} we obtain that $\dfilling{R,G}{n}$ is a quasi-isometry invariant for groups of type~$\FP_n(R)$.

\begin{thm}[store=qiInvarianceDiscrete]%
    \label{thm: QI-invariance discrete}
    Let $G\noic$ and $H\noic$ be quasi-isometric groups of type $\FP_n(R)$.
    Then $\dfilling{R,G}{n}\approx\dfilling{R,H}{n}$.
\end{thm}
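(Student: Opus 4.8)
The plan is to derive \cref{thm: QI-invariance discrete} at once from \cref{thm: dfilling finite} and \cref{thm: quasi-isometry invariance of filling}. Throughout we equip $R$ with the discrete norm, so that by our index convention $\dfilling{R,\Gamma}{m}$ and $\filling{R,\Gamma}{m}$ denote the very same function; we may assume $n\ge 2$, as otherwise the statement is elementary.

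The first thing to record is that the discrete norm is $1$-separated, since $\dabs{x}=1$ for every $x\neq 0$. Hence every ``$R$ is $1$-separated'' hypothesis appearing in \cref{lem: bounded filling,lem: quasi-isometry partial chain map,lem: quasi-isometry partial homotopy,thm: quasi-isometry invariance of filling} is automatically satisfied in the present setting, so no rescaling of the norm is required.

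Next I would observe that, since $G$ and $H$ are of type $\FP_n(R)$, they are of type $\FP_i(R)$ for every $i\le n$. Applying \cref{thm: dfilling finite} in each degree $2\le i\le n$ then gives that $\dfilling{R,G}{i}=\filling{R,G}{i}$ and $\dfilling{R,H}{i}=\filling{R,H}{i}$ take only finite values for all $2\le i\le n$. This is precisely the finiteness hypothesis of \cref{thm: quasi-isometry invariance of filling}, which therefore yields $\filling{R,G}{n}\approx\filling{R,H}{n}$, that is, $\dfilling{R,G}{n}\approx\dfilling{R,H}{n}$.

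No real obstacle arises beyond the two theorems already established. The only points deserving a little care are that \cref{thm: quasi-isometry invariance of filling} needs finiteness in all intermediate degrees $2\le i\le n$, not merely in degree $n$ --- which is why one passes through the implication $\FP_n(R)\Rightarrow\FP_i(R)$ --- and the bookkeeping of the index shift between the discrete filling function $\dfilling{R,\Gamma}{n}$ and $\filling{R,\Gamma}{n}$.
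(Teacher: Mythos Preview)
Your proposal is correct and follows precisely the route the paper takes: the paper states the theorem as ``a direct consequence of \cref{thm: quasi-isometry invariance of filling,thm: dfilling finite}'' without giving a separate proof, and your argument spells out exactly this derivation. Your remarks about $1$-separation, the implication $\FP_n(R)\Rightarrow\FP_i(R)$, and the index shift are appropriate bits of bookkeeping but introduce nothing new.
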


\begin{rem}
    It was proven by Alonso~\cite{alonso} that $\FP_n(R)$ is a quasi-isometry invariant.
    We could therefore weaken the hypothesis of \cref{thm: QI-invariance discrete,thm: quasi-isometry invariance of filling}
    to just requiring \(G\) to be of type $\FP_n(R)$.
    This is not possible for groups of type $\FH_n(R)$, where it is still an open problem, whether
    $\FH_n(R)$ is invariant under quasi-isometry.
    Note that for $n = 2$ \cref{thm: dfilling finite} is a consequence of~\cite{bader+kropholler+vankov25}*{2.21},
    as $\FP_n(R)$ and $\FH_n(R)$ coincide for $n \leq 2$.
\end{rem}

In~\cite{fleming+martinez-pedroza}, Joshua W.\ Fleming and Eduardo Martínez-Pedroza proved that for $R = \bbZ$ equipped with the absolute value
$\filling{\bbZ,G}{n}$ takes finite values (\cref{cor: integral finite} gives another proof of this fact).
Therefore, the integral homological Dehn functions of \(G\)
are quasi-isometry invariants for groups of type~$\FP_n(\bbZ)$.
This has probably already been known to experts in the field, but while there may be a reference for this result, the author is not aware of any.
We therefore record it here for the reader's convenience.

\begin{thm}[\cites{fletcher,young,fleming+martinez-pedroza}]%
    \label{cor: QI-invariance integral}
    Let $G\noic$ and $H\noic$ be quasi-isometric groups of type $\FP_n(\bbZ)$.
    Then $\filling{\bbZ,G}{n} \approx \filling{\bbZ,H}{n}$.
\end{thm}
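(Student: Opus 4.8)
The plan is to recognise \cref{cor: QI-invariance integral} as a formal consequence of two results established earlier in the paper: the finiteness statement \cref{cor: integral finite} and the conditional quasi-isometry invariance \cref{thm: quasi-isometry invariance of filling}. First I would observe that the ring $\bbZ$, equipped with the usual absolute value, is a $1$-separated normed ring, and that $\bbZ_{\leq l} = \{ r \in \bbZ \mid \abs{r} \leq l\}$ is finite for every $l \geq 0$. Hence \cref{prop: finite fibers implies finite filling function} applies; this is exactly what yields \cref{cor: integral finite}, namely that for any group $\Gamma$ of type $\FP_i(\bbZ)$ with $i \geq 2$ the function $\filling{\bbZ,\Gamma}{i}$ takes only finite values.

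Next, since $G$ and $H$ are both of type $\FP_n(\bbZ)$, they are of type $\FP_i(\bbZ)$ for every $i \leq n$, so \cref{cor: integral finite} gives $\filling{\bbZ,G}{i}(l) < \infty$ and $\filling{\bbZ,H}{i}(l) < \infty$ for all $2 \leq i \leq n$ and all $l$. These are precisely the hypotheses of \cref{thm: quasi-isometry invariance of filling} with $R = \bbZ$. Applying that theorem to the given quasi-isometry between $G$ and $H$ then yields $\filling{\bbZ,G}{n} \approx \filling{\bbZ,H}{n}$, which is the assertion.

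The one point to keep in mind is the bookkeeping of degrees: \cref{thm: quasi-isometry invariance of filling} needs finiteness in every degree $2 \leq i \leq n$, not merely in degree $n$, because the partial chain maps and chain homotopies of \cref{lem: quasi-isometry partial chain map,lem: quasi-isometry partial homotopy} are built degree by degree and invoke the (weighted and unweighted) filling functions --- via \cref{lem: bounded filling}, using that $\bbZ$ is $1$-separated together with \cref{prop: polynomial equivalence} to pass between the two norms --- in each intermediate degree. \cref{cor: integral finite} supplies exactly this uniform-in-degree input. I do not anticipate any genuine obstacle: the entire content of \cref{cor: QI-invariance integral} sits in the finiteness half, which for $R = \bbZ$ is now available (through \cref{cor: integral finite}, or, in the finitely presented case, through the earlier work of Fletcher and Young), and once finiteness is in hand the quasi-isometry invariance is immediate from \cref{thm: quasi-isometry invariance of filling}. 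It therefore seems worth recording the statement, as indicated in the surrounding discussion, even though it is not new.
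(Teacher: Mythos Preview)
Your proposal is correct and matches the paper's own reasoning: the paper does not give a formal proof of \cref{cor: QI-invariance integral} but records it as an immediate consequence of the finiteness result \cref{cor: integral finite} (equivalently, of \cite{fleming+martinez-pedroza}) together with \cref{thm: quasi-isometry invariance of filling}, which is exactly your argument. Your third paragraph slightly over-explains one point: the paper (see the remark preceding \cref{thm: quasi-isometry invariance of filling}) notes that for the unweighted statement one can run \cref{lem: quasi-isometry partial chain map,lem: quasi-isometry partial homotopy} using only unweighted norms and the bound $\norm{y}\le\filling{R,\Gamma}{i}(\norm{c})+1$, so there is no need to route through \cref{prop: polynomial equivalence} or the weighted filling---but your detour via $1$-separation of $\bbZ$ is also valid.
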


Using \cref{prop: polynomial equivalence} we obtain analogous results for the weighted versions of these filling functions.

\begin{thm}%
    \label{thm: quasi-isometry invariance of weighted filling}
    Let $R\noic$ be $\epsilon$\=/separated.
    Assume that $\weightedfilling{R,G}{i}(l), \weightedfilling{R,H}{i}(l) < \infty$ for $i \leq n$ and all \(l\).
    If $G\noic$ and $H\noic$ are quasi-isometric, then $\weightedfilling{R,G}{n} \approx \weightedfilling{R,H}{n}$.
\end{thm}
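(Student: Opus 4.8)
The plan is to adapt the proof of \cref{thm: quasi-isometry invariance of filling}, replacing the \(\ell^1\)\=/norms \(\norm{\placeholder}\) throughout by the weighted norms \(\norm{\placeholder}^\Gamma\). First, by \cref{lem: 0-separated norm bounded by 1} together with \cref{lem: equivalent filling from eqeuivalent norms} (whose proof applies verbatim in the weighted case), we may pass to an equivalent norm on \(R\) and assume that \(R\) is \(1\)\=/separated; this changes neither the \(\approx\)\=/class of \(\weightedfilling{R,G}{n}\) nor that of \(\weightedfilling{R,H}{n}\). By \cref{prop: polynomial equivalence}, finiteness of \(\weightedfilling{R,G}{i}\) and \(\weightedfilling{R,H}{i}\) for \(2 \leq i \leq n\) is equivalent to finiteness of the non-weighted functions \(\filling{R,G}{i}\) and \(\filling{R,H}{i}\), so all hypotheses of \cref{lem: quasi-isometry partial chain map,lem: quasi-isometry partial homotopy} are available.

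Next, fix a quasi-isometry \(f \colon G \to H\) with quasi-inverse \(h \colon H \to G\) and apply \cref{lem: quasi-isometry partial chain map} to both \(f\) and \(h\), producing partial chain maps \(f_\ast\), \(h_\ast\) and constants \(\tilde{D}_i\) for \(f\) and \(\tilde{D}'_i\) for \(h\) with \(\norm{f_i(x)}^H \leq \tilde{D}_i \norm{x}^G\) and \(\norm{h_i(x)}^G \leq \tilde{D}'_i \norm{x}^H\) for \(i \leq n\); the essential point is that the \lcnamecref{lem: quasi-isometry partial chain map} records these weighted bounds, not merely the \(\ell^1\)\=/ones. Then apply \cref{lem: quasi-isometry partial homotopy} to obtain \(R\)\=/linear maps \(s_i \colon L^G_i \to L^G_{i+1}\) for \(i \leq n-1\) with \(\partial s_i + s_{i-1}\partial = \id - h_i \circ f_i\) and a weighted bound \(\norm{s_i(x)}^G \leq \tilde{E}_i \norm{x}^G\).

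Now repeat the computation in the proof of \cref{thm: quasi-isometry invariance of filling} with weighted norms in place of \(\ell^1\)\=/norms. Given an \((n-1)\)\=/cycle \(z \in L^G_{n-1}\), the chain \(f_{n-1}(z)\) is again a cycle with \(\norm{f_{n-1}(z)}^H \leq \tilde{D}_{n-1}\norm{z}^G\); by finiteness of \(\weightedfilling{R,H}{n}\) it admits a filling \(b \in L^H_n\) with \(\norm{b}^H \leq \weightedfilling{R,H}{n}(\tilde{D}_{n-1}\norm{z}^G) + 1\). Exactly as in \cref{thm: quasi-isometry invariance of filling}, the chain \(h_n(b) + s_{n-1}(z)\) is a filling of \(z\), and
\[
    \norm{h_n(b) + s_{n-1}(z)}^G \leq \tilde{D}'_n \norm{b}^H + \tilde{E}_{n-1}\norm{z}^G \leq \tilde{D}'_n\bigl(\weightedfilling{R,H}{n}(\tilde{D}_{n-1}\norm{z}^G) + 1\bigr) + \tilde{E}_{n-1}\norm{z}^G,
\]
which gives \(\weightedfilling{R,G}{n} \preccurlyeq \weightedfilling{R,H}{n}\). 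Swapping the roles of \(G\) and \(H\) yields the reverse inequality, hence \(\weightedfilling{R,G}{n} \approx \weightedfilling{R,H}{n}\).

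Since the technical content is already encapsulated in \cref{lem: quasi-isometry partial chain map,lem: quasi-isometry partial homotopy,prop: polynomial equivalence}, each of which was formulated so as to carry a bound on the weighted norm, there is no substantial obstacle beyond careful bookkeeping. The one point meriting attention is the reduction to a \(1\)\=/separated norm at the outset, which is what makes \cref{lem: bounded filling}, and hence \cref{lem: quasi-isometry partial chain map,lem: quasi-isometry partial homotopy}, applicable; this is harmless because \cref{lem: equivalent filling from eqeuivalent norms} shows the weighted filling functions are insensitive to rescaling the norm on \(R\).
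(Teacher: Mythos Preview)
Your proposal is correct and follows essentially the same approach as the paper: reduce to a \(1\)\=/separated norm via \cref{lem: 0-separated norm bounded by 1,lem: equivalent filling from eqeuivalent norms}, then rerun the proof of \cref{thm: quasi-isometry invariance of filling} with the weighted constants \(\tilde{D}_i,\tilde{D}'_i,\tilde{E}_i\) in place of \(D_i,D'_i,E_i\) and \(\norm{\placeholder}^\Gamma\) in place of \(\norm{\placeholder}\). Your explicit invocation of \cref{prop: polynomial equivalence} to verify the hypotheses of \cref{lem: quasi-isometry partial chain map,lem: quasi-isometry partial homotopy} is a point the paper leaves implicit, but the structure of the argument is the same.
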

\begin{proof}
    Due to \cref{lem: 0-separated norm bounded by 1,lem: equivalent filling from eqeuivalent norms} we may assume that
    \(R\) is \(1\)\=/separated. Now the proof of \cref{thm: quasi-isometry invariance of filling} applies verbatim with
    $D_i$, $D'_i$, $E_i$ replaced by $\tilde{D}_i$, $\tilde{D}'_i$, $\tilde{E}_i$ and $\norm{\placeholder}$ replaced by
    $\norm{\placeholder}^G$.
\end{proof}

\begin{thm}[store=qiInvarianceDiscreteWeighted]%
    \label{thm: QI-invariance discrete weighted}
    Let $G\noic$ and $H\noic$ be quasi-isometric groups of type $\FP_n(R)$.
    Then $\weighteddfilling{R,G}{n}\approx\weighteddfilling{R,H}{n}$.
\end{thm}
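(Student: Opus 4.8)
The plan is to follow the same two\=/step pattern by which \cref{thm: QI-invariance discrete} was deduced from \cref{thm: dfilling finite} and \cref{thm: quasi-isometry invariance of filling}, but now in the weighted setting. First I would record the elementary observation that the discrete norm on $R$ is $1$\=/separated in the sense of \cref{def: norm properties}, hence in particular $\epsilon$\=/separated, so that $R$ meets the standing hypothesis of \cref{thm: quasi-isometry invariance of weighted filling}. Under the notation convention in force, $\weighteddfilling{R,\Gamma}{n}$ is by definition $\weightedfilling{R,\Gamma}{n}$ for $R$ carrying the discrete norm, so the two results indeed speak about the same function.

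Next I would supply the finiteness input. By \cref{cor: discrete weighted finite}, both $\weighteddfilling{R,G}{n}$ and $\weighteddfilling{R,H}{n}$ take only finite values, which is precisely the hypothesis required by the weighted invariance criterion. Nothing further needs to be checked at this point, since \cref{cor: discrete weighted finite} already packages the route from finiteness of the non\=/weighted discrete filling function, \cref{thm: dfilling finite}, through the polynomial comparison of \cref{prop: polynomial equivalence}.

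Finally, I would apply \cref{thm: quasi-isometry invariance of weighted filling} to $R$ equipped with the discrete norm. All of its hypotheses are now in place, as $G$ and $H$ are quasi\=/isometric groups of type $\FP_n(R)$ whose weighted discrete filling functions are finite, and the conclusion is exactly $\weighteddfilling{R,G}{n} \approx \weighteddfilling{R,H}{n}$.

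I do not expect a genuine obstacle in this argument: the substance has already been extracted in \cref{thm: dfilling finite}, \cref{prop: polynomial equivalence}, and \cref{thm: quasi-isometry invariance of weighted filling}, and the deduction is a matter of assembling them. The only points that warrant a moment's attention are bookkeeping around the index shift between the $\weighteddfilling{R,\Gamma}{n}$ and $\weightedaltfilling{R,\Gamma}{n-1}$ conventions, and the degenerate case $n = 1$, where $\weighteddfilling{R,\Gamma}{1}$ is linear regardless of the group so that the equivalence is immediate; neither of these creates any real difficulty.
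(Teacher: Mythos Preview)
Your proposal is correct and matches the paper's approach: the theorem is stated there without separate proof, as an immediate consequence of \cref{cor: discrete weighted finite} together with \cref{thm: quasi-isometry invariance of weighted filling}. The only bookkeeping point to add is that \cref{thm: quasi-isometry invariance of weighted filling} requires finiteness of $\weightedfilling{R,G}{i}$ and $\weightedfilling{R,H}{i}$ for all $i \leq n$, not just $i = n$; this follows because $\FP_n(R)$ implies $\FP_i(R)$ for every $i \leq n$, so \cref{cor: discrete weighted finite} applies in each degree.
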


\begin{thm}[store=qiInvarianceIntegralWeighted]%
    \label{thm: QI-invariance integral weighted}
    Let $G\noic$ and $H\noic$ be quasi-isometric groups of type $\FP_n(\bbZ)$.
    Then $\weightedfilling{\bbZ,G}{n}\approx\weightedfilling{\bbZ,H}{n}$.
\end{thm}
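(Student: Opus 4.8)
The plan is to obtain this as the special case $R = \bbZ$ (with the absolute value) of \cref{thm: quasi-isometry invariance of weighted filling}. That theorem has two hypotheses: that the coefficient ring be $\epsilon$-separated, and that $\weightedfilling{R,G}{i}$ and $\weightedfilling{R,H}{i}$ be finite for all $i \le n$. For $\bbZ$ equipped with the absolute value the first is clear, with $\epsilon = 1$. So the task reduces to verifying finiteness of the weighted filling functions in every degree $i \le n$; granting that, \cref{thm: quasi-isometry invariance of weighted filling} immediately yields $\weightedfilling{\bbZ,G}{n} \approx \weightedfilling{\bbZ,H}{n}$.

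First I would record that a group of type $\FP_n(\bbZ)$ is of type $\FP_i(\bbZ)$ for all $i \le n$, so the finiteness statements below apply degree by degree. For $2 \le i \le n$, \cref{cor: integral finite} (equivalently, \cref{prop: finite fibers implies finite filling function} applied to $R = \bbZ$, whose hypothesis that $\{r \in \bbZ \mid |r| \le l\}$ be finite is satisfied) shows that $\filling{\bbZ,G}{i}$ and $\filling{\bbZ,H}{i}$ take finite values. Since the absolute value on $\bbZ$ is $1$-separated, \cref{prop: polynomial equivalence} turns this into finiteness of $\weightedfilling{\bbZ,G}{i}$ and $\weightedfilling{\bbZ,H}{i}$ for $2 \le i \le n$. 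The remaining degree $i = 1$ is covered by the remark following \cref{prop: polynomial equivalence}: for any finitely generated group $\Gamma$, the function $\weightedfilling{R,\Gamma}{1}$ is always linear, hence finite.

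Collecting these ranges, $\weightedfilling{\bbZ,G}{i}$ and $\weightedfilling{\bbZ,H}{i}$ are finite for all $i \le n$, so all hypotheses of \cref{thm: quasi-isometry invariance of weighted filling} are met and the theorem follows. This mirrors the deduction of \cref{thm: QI-invariance discrete weighted} from \cref{thm: dfilling finite}, with \cref{cor: integral finite} now playing the role of \cref{thm: dfilling finite}.

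The proof is purely a combination of earlier results, so I do not expect a genuine obstacle. The only point needing attention is the low-degree bookkeeping: treating $i = 1$ separately, since \cref{prop: polynomial equivalence} is stated only for $i \ge 2$, and invoking heredity of the $\FP$-conditions so that \cref{cor: integral finite} legitimately applies in each degree $2 \le i \le n$.
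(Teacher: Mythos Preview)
Your proposal is correct and matches the paper's own (implicit) argument: the paper states this theorem without proof as an immediate corollary of \cref{thm: quasi-isometry invariance of weighted filling}, with the finiteness hypothesis supplied by combining \cref{cor: integral finite} with \cref{prop: polynomial equivalence}. Your extra care with the low degree $i=1$ is not strictly necessary, since the proof of \cref{thm: quasi-isometry invariance of weighted filling} (via \cref{lem: quasi-isometry partial chain map}) only uses finiteness for $2 \le i \le n$, but it does no harm.
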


Even though there is a close relationship between weighted and unweighted filling functions due to \cref{prop: polynomial equivalence},
it is unclear whether knowledge of one of the functions allows us to recover the other.

\begin{question}
    Are there groups \(G\), $H\noic$ of type $\FP_n$ such that $\filling{\bbZ,G}{n} \approx \filling{\bbZ,H}{n}$ but
    $\weightedfilling{\bbZ,G}{n} \not\approx \weightedfilling{\bbZ,H}{n}$ or vice versa?
\end{question}

\section{Cohomology}%
\label{sec: cohomological dimension}
This section provides an algebraic proof of the quasi-isometry invariance of group cohomology with
coefficients in the group ring, originally established by Gersten~\cite{gersten} for finitely presented
groups and more generally by Li~\cite{lixin} for groups of type $\FP(R)$.
The argument presented here follows Gersten's proof in spirit without relying on the groups being finitely presented.
As a consequence, we deduce quasi-isometry invariance of the cohomological dimension $\cd_R(G)$ and of being a (Poincaré) duality group for groups of type $\FP(R)$ -- results originally due to Sauer~\cite{sauer}, respectively Li~\cite{lixin}.
The proofs use the techniques developed in \cref{sec: cellular arguments} and \cref{sec: QI invariance}.

In the following let \(G\) and \(H\) be groups of type $\FP(R)$ for a ring \(R\).
Furthermore, let $L^G_\ast$ and $L^H_\ast$ be free resolutions of \(R\) as an $RG$- respectively $RH$-module
such that all $L^G_i$ and $L^H_i$ are finitely generated.
Let $B_i$ be choices of $RG$-bases of $L^G_i$.

For this section we consider \(R\) to be equipped with the discrete norm. In this case \cref{thm: dfilling finite}
asserts that $\dfilling{R,G}{i}$ and $\dfilling{R,H}{i}$ are finite functions for all $i \geq 2$.

\begin{lem}%
    \label{lem: bound basis to chain upgrade}
    Let $f \colon G \to H$ be a map and $\varphi \colon RG^n \to RH^m$ be an \(R\)-linear map such that
    there exists a constant $C > 0$ with
    $\supp_H(\varphi(gb_i)) \subseteq B_H(f(g), C)$ for all $g \in G$ and $1 \leq i \leq n$.
    Then $\supp_H(\varphi(c)) \subseteq B_H(f(\supp_G(c)), C)$ for any $c \in RG^n$.
\end{lem}
\begin{proof}
    Apply the assumption to each basis element in $\supp_R(c)$ to obtain
    \[
        \supp_H(\varphi(c))
        \subseteq \;\bigcup_{\mathclap{gb \in \supp_R(c)}}\; \supp_H(\varphi(gb))
        \subseteq \;\bigcup_{\mathclap{gb \in \supp_R(c)}}\; B_H\bigl(f(g),C\bigr)
        = B_H\bigl(f(\supp_G(c)),C\bigr)\mpct{.}\qedhere
    \]
\end{proof}

\begin{defn}
    Let $\varphi \colon L \to L'$ be an \(R\)-linear map between free \(R\)-modules with bases \(B\) and $B'$ respectively.
    We say that $\varphi$ is \emph{algebraically proper} if for each $y \in B'$ the set
    \[
        I_\varphi(y) \defq \{x \in B \mid y \in \supp_R(\varphi(x)) \}
    \]
    is finite.
\end{defn}

\begin{lem}%
    \label{lem: proper from bounded G-support}
    We retain the assumptions of \cref{lem: bound basis to chain upgrade}.
    If \(f\) is coarsely injective then $\varphi$ is algebraically proper.
\end{lem}
\begin{proof}
    Let \(B\) be an $RG$-basis for $RG^n$ and $B'$ an $RH$-basis for $RH^m$,
    such that $GB$ and $HB'$ are the corresponding \(R\)-bases, respectively.
    Consider $y = hb' \in HB'$ and $x = gb \in I_{\varphi}(y)$. Then $h \in B_H(f(g), C)$, hence $f(g) \in B_H(h, C)$.
    Therefore, $I_\varphi(y) \subseteq f^{-1}(B_H(h, C)) \cdot B$,
    which is finite as \(f\) is coarsely injective and \(B\) is finite.
\end{proof}

\begin{prop}%
    \label{prop: group support contained in ball}
    Let $f \colon G \to H$ be a $(K,K)$\=/quasi-isometry and
    $f_i \colon L^G_i \to L^H_i$ be the \(R\)-linear chain map constructed
    in \cref{lem: quasi-isometry partial chain map}.

    Then there are constants $C_i \geq 0$ such that
    $\supp_H(f_i(x)) \subseteq B_H\bigl(f(\supp_G(x)), C_i\bigr)$
    for all $x \in L^G_i$ and all $i \geq 0$.
\end{prop}
\begin{proof}
    In view of \cref{lem: bound basis to chain upgrade} it suffices to consider the case $c = gb \in GB_i$.
    For $i = 0$ we have $\supp_H(f_0(g)) = \{f(g)\}$, so we can take $C_0 = 0$.
    Assume that the statement holds for $i - 1 \geq 0$. Then
    \begin{equation}%
        \label{eq: bound induction hypothesis}
        \begin{split}
            \supp_H(f_{i-1}(\partial gb))
             & \subseteq B_H\bigl(f(\supp_G(g \partial b)), C_{i-1}\bigr) \\
             & = B_H\bigl(f(g\supp_G(\partial b)), C_{i-1}\bigr)\mpct{.}
        \end{split}
    \end{equation}
    for any $gb \in GB_i$.
    Because \(f\) is a $(K,K)$-quasi-isometry, we have
    \[d_H(f(g), f(gg')) \leq K d_G(g, gg') + K = K\ell_G(g') + K\]
    for every $g' \in \supp_G(\partial b)$.
    Set $C_b \defq \max\{K \ell_G(g') + K \mid g' \in \supp_G(\partial b)\}$.
    Then
    \begin{align}
        f(g \supp_G(\partial b))      &\subseteq B_H(f(g), C_b)\nonumber \\
        \intertext{%
            and thus, using \cref{eq: bound induction hypothesis}, every $gb \in GB_i$ satisfies%
        }
        \supp_H(f_{i-1}(\partial gb)) &\subseteq B_H(f(g), C_{i-1} + \tilde{C}_i)\mpct{,}\label{eq: bound H-support}
    \end{align}
    where $\tilde{C}_i \defq \max\{C_b \mid b \in B_i\}$.

    \Cref{lem: reduced filling estimate} yields a constant $C_i'$ such that every $u \in \supp_R(f_{i}(gb))$ satisfies
    $d_H(u, v_u) \leq C_i' \dnorm{f_i(gb)}$
    for some $v_u \in \supp_R(f_{i-1}(\partial gb))$.
    Here we use that $f_{i}(gb)$ is reduced.
    Writing $u = h_u b_u$, and $v_u = h_v b_v$ for $h_u,h_v\in H$, $b_u \in B_i$, and $h_v \in B_{i-1}$
    we see that $h_v \in \supp_H(f_i(gb))$ and
    \begin{equation}%
        \label{eq: bound word length cell in filling}
        d_H(h_u,h_v) \leq C_i' D_i\dnorm{gb} = C_i' D_i\mpct{,}
    \end{equation}
    where $D_i$ is the constant from \cref{lem: quasi-isometry partial chain map}.
    Together, \cref{eq: bound H-support,eq: bound word length cell in filling} yield
    \[
        \supp_H(f_i(gb)) \subseteq B_H(f(g), C_{i-1} + \tilde{C}_i + C_i'D_i)\mpct{.}\qedhere
    \]
\end{proof}

\begin{prop}%
    \label{prop: group support homotopy contained in ball}
    Let $f \colon G \to H$ be a $(K,K)$\=/quasi-isometry with quasi-inverse $h \colon H \to G$
    such that $h \circ f$ is \(K\)-close to $\id_G$.
    Let $f_i \colon L^G_i \to L^H_i$, $h_i \colon L^H_i \to L^G_i$ be the \(R\)-linear chain maps
    constructed in \cref{lem: quasi-isometry partial chain map}.
    Further, let $s_i \colon L^G_i \to L^G_{i+1}$ be the \(R\)-linear homotopy from
    \cref{lem: quasi-isometry partial homotopy}.

    Then there are constants $A_i \geq 0$ such that for all $x \in L^G_i$
    \[
        \supp_G(s_i(x)) \subseteq B_G(\supp_G(x), A_i)\mpct{.}
    \]
\end{prop}
\begin{proof}
    As in the proof of \cref{prop: group support contained in ball} we can restrict to the case
    $c = gb \in GB_i$.
    For $i=0$ an element $g \in G = GB_0$ gets mapped to a geodesic path $s_0(g)$ from \(g\) to $h(f(g))$ of length at most \(K\).
    \Cref{cor: estimate weighted to non-weighted} then tells us that $\supp_G(s_0(g))$ is contained in $B_G(g, 2K_1K)$.

    Now assume that the statement holds for $i - 1 \geq 0$.
    There exist $C_i > 0$ and $C'_i > 0$ such that
    \begin{equation}%
        \label{eq: support bound fh}
        \begin{split}
            \supp_G\bigl(h_i(f_i(c))\bigr)
             & \subseteq B_G\Bigl(h\bigl(\supp_H(f_i(c))\bigr), C'_i\Bigr)                 \\
             & \subseteq B_G\Bigl(h\bigl(B_H(f(\supp_G(c)), C_i)\bigr), C'_i\Bigr)\mpct{.}
        \end{split}
    \end{equation}
    for any $c \in L^G_{i}$.
    Let $g \in \supp_G(c)$ and $g' \in B_H(f(g), C_i)$. Then
    \begin{align*}
        d_G(h(g'), g)
         &\leq d_G\bigl(h(g'), h(f(g))\bigr) + d_G\bigl(h(f(g)), g\bigr) \\
         &\leq \bigl(K \cdot d_H(g',f(g)) + K\bigr) + K \\
         &\leq KC_i + 2K\mpct{,}
    \end{align*}
    hence $h\bigl(B_H(f(g), C_i)\bigr) \subseteq B_G(g, KC_i + 2K)$.
    Now \cref{eq: support bound fh} yields
    \begin{align*}
        \supp_G\bigl(h_i(f_i(c))\bigr)
         &\subseteq B_G\Bigl(h\bigl(B_H(f(\supp_G(c)), C_i)\bigr), C'_i\Bigr) \\
         &\subseteq\;\bigcup_{\mathclap{g \in \supp_G(c)}}\; B_G\Bigl(B_G(g, KC_i + 2K), C'_i\Bigr) \\
         &= B_G\bigl(\supp_G(c), KC_i + 2K + C'_i\bigr)
    \end{align*}
    for every $c \in L^G_i$. For $c = gb \in GB_i$ the cycle $c' = c - h_i(f_i(c)) - s_{i-1}(\partial c)$ satisfies
    \begin{align*}
        \supp_G(c')
         &\subseteq \{g\} \cup B_G(g, KC_i + 2K + C'_i) \cup B_G(g \supp(\partial b), A_{i-1}) \\
         &\subseteq B_G\bigl(g, \max(KC_i + 2K + C'_i, \tilde{A}_i + A_{i-1})\bigr)\mpct{,}
    \end{align*}
    where $\tilde{A}_i \defq \max\{\ell_G(g') \mid g' \in \bigcup_{b \in B_i} \supp_G(\partial b_i)\}$.
    By definition, $s_i(c)$ is a reduced minimal filling of $c'$ and one proceeds as in the last step of the proof of \cref{prop: group support contained in ball}
    to obtain a constant $A_i$ such that $\supp_G(s_i(c)) \subseteq B_G(g, A_i)$.
\end{proof}

\Cref{prop: group support contained in ball,prop: group support homotopy contained in ball} together with \cref{lem: bound basis to chain upgrade} imply that the maps constructed in \cref{lem: quasi-isometry partial chain map,lem: quasi-isometry partial homotopy} are algebraically proper, hence we obtain the following \lcnamecref{cor: proper chain equivalence from QI}.

\begin{cor}%
    \label{cor: proper chain equivalence from QI}
    Let $f \colon G \to H$ be a quasi-isometry.
    There exists an \(R\)-linear chain homotopy equivalence $f_\ast \colon L^G_\ast \to L^H_\ast$ with
    chain homotopy inverse $h_\ast \colon L^H_\ast \to L^G_\ast$ and homotopies $s_\ast \colon h_\ast \circ f_\ast \simeq \id$,
    $t_\ast \colon f_\ast \circ h_\ast \simeq \id$ such that $f_i$, $h_i$, $s_i$ and $t_i$ are algebraically proper
    for all \(i\).
\end{cor}

For a group $\Gamma$ and a left $R\Gamma$\=/module \(M\) consider the right $R\Gamma$\=/module
$\fhom_R(M, R)$ consisting of those \(R\)\=/linear maps $f \colon M \to R$
such that, for every $m \in M$, we have $f(\gamma m) = 0$ for all but finitely many $\gamma \in \Gamma$.
By~\cite{brown}*{VIII Lemma 7.4}, there is a natural isomorphism
\begin{equation}%
    \label{eq: iso fhom}
    \hom_{R\Gamma}(M,R\Gamma) \cong \fhom_R(M,R)\mpct{.}
\end{equation}
If \(M\) is a free $R\Gamma$-module with $R\Gamma$-basis \(B\), the module $\fhom_R(M,R)$ consists of those maps \(f\)
for which $f(\gamma b) \neq 0$ for only finitely many $\gamma b \in \Gamma B$.

Evidently, an algebraically proper map $\varphi \colon RG^n \to RH^m$ induces a well-defined map
$\fhom_R(RH^m,R) \to \fhom_R(RG^n,R)$ through precomposition.

\begin{thm}[\cite{lixin}*{Corollary 4.41}]%
    \label{thm: QI-invariance cohomology}
    Let \(G\) and \(H\) be groups of type $\FP(R)$.
    If \(G\) and \(H\) are quasi-isometric then $H^i(G; RG) \cong H^i(H; RH)$ as \(R\)-modules for all $i \geq 0$
    and any ring \(R\).
\end{thm}
\begin{proof}
    The chain homotopy equivalence from \cref{cor: proper chain equivalence from QI} induces a well-defined
    \(R\)-linear chain homotopy equivalence $\fhom_R(L^H_\ast,R) \to \fhom_R(L^G_\ast,R)$.
    Thus,
    \begin{align*}
        H^i(G;RG)
         &\cong H^i(\hom_{RG}(L^G_\ast, RG)) \cong H^i(\fhom_{R}(L^G_\ast, R)) \\
         &\cong H^i(\fhom_{R}(L^H_\ast, R)) \cong H^i(\hom_{RH}(L^H_\ast, RH))
        \cong H^i(H;RH)\mpct{,}
    \end{align*}
    where the second and third isomorphisms use \cref{eq: iso fhom}.
\end{proof}

\begin{cor}[\cite{sauer}*{Theorem 1.2}]%
    \label{cor: QI-invariance cd}
    The cohomological dimension $\cd_R(G)$ is a quasi-isometry invariant of groups of type $\FP(R)$.
\end{cor}

Another consequence of \cref{thm: QI-invariance cohomology} is the quasi-isometry invariance of being a (Poincar\'e) duality group.
The proof is the same as the one of Corollary 3 in~\cite{gersten}.

\begin{cor}[\citelist{\cite{gersten}*{Corollary 3}\cite{lixin}*{Corollary 4.41}}]%
    \label{cor: QI-invariance duality}
    Let \(G\) and \(H\) be quasi-isometric groups of type $\FP(R)$. If \(G\) is a duality group, respectively a Poincar\'e duality group over \(R\), then so is \(H\).
\end{cor}

\bibliography{references}

\end{document}